\documentclass[reqno,12pt]{amsart}
\usepackage[margin=1.2in]{geometry}
\usepackage{amssymb}
\usepackage{amsthm}
\usepackage{amsmath}
\usepackage{amsxtra}
\usepackage{latexsym}
\usepackage{mathrsfs}
\usepackage[all,cmtip]{xy}
\usepackage[all]{xy}
\usepackage{enumitem}
\usepackage{xcolor}
\usepackage{comment}
\usepackage{mathabx,epsfig}
\usepackage{bm}
\usepackage{mathtools}
\usepackage{booktabs}
\usepackage{appendix}
\usepackage{hyperref}
\hypersetup{
    colorlinks,
    citecolor=black,
    filecolor=black,
    linkcolor=black,
    urlcolor=black
}

\usepackage[
backend=biber,
bibstyle=ieee-alphabetic,
citestyle=ieee-alphabetic,
sorting=nyt,
isbn=false,
url=false,
doi=false,
giveninits=true,
maxnames=10,
labelalpha=true,
maxalphanames=4,
dashed=false,
]{biblatex}
\DeclareCaseLangs{}

\addbibresource{bibliography.bib}

\DeclareFieldFormat*{date}{\mkbibparens{#1}}
\DeclareFieldFormat*{volume}{\mkbibbold{#1}}
\DeclareFieldFormat*{title}{\mkbibemph{#1\isdot}}
\DeclareFieldFormat*{journaltitle}{#1}
\DeclareFieldFormat*{pages}{{#1}}

\AtEveryBibitem{%
  \ifentrytype{online}{%
    \DeclareFieldFormat*{date}{#1}%
  }{%
  }%
}

\newtheorem{thm}{Theorem}[section]
\newtheorem{lem}[thm]{Lemma}
\newtheorem{conj}[thm]{Conjecture}
\newtheorem{claim}[thm]{Claim}
\newtheorem{prop}[thm]{Proposition}
\newtheorem{cor}[thm]{Corollary}

\theoremstyle{definition}
\newtheorem{conv}[thm]{Convention}

\newtheorem{eg}[thm]{Example}
\newtheorem{defn}[thm]{Definition}
\newtheorem{rmk}[thm]{Remark}
\newtheorem*{ack}{Acknowledgements}
\numberwithin{equation}{section}
\newcommand{\var}{\overline}

\theoremstyle{plain}

\newcommand{\pullbackcorner}[1][dr]{\save*!/#1-1.5pc/#1:(-1,1)@^{|-}\restore}

\keywords{Quasi-F-splitting; Hypersurfaces, Singularities}
\subjclass[2020]{13A35,14J70  ,14B05}


\def\Q{{\mathbb Q}}

\def\Z{{\mathbb Z}}
\def\P{{\mathbb P}}

\def\F{{\mathbb F}}
\def\m{{\mathfrak m}}
\def\n{{\mathfrak n}}

\def\var{\overline}
\def\Hom{\mathop{\mathrm{Hom}}\nolimits}

\def\sO{\mathcal{O}}

\DeclareMathOperator{\Aut}{Aut}

\DeclareMathOperator{\Spec}{Spec}
\DeclareMathOperator{\Proj}{Proj}

\DeclareMathOperator{\Ker}{Ker}

\DeclareMathOperator{\codim}{codim}

\DeclareMathOperator{\sht}{ht}

\DeclareMathOperator{\chara}{char}
\DeclareMathOperator{\jac}{jac}
\DeclareMathOperator{\Fitt}{Fitt}
\DeclareMathOperator{\length}{length}

\AtBeginDocument{%
  \def\MR#1{}
}

\newenvironment{claimproof}[0]
  {%
   \paragraph{\it Proof.}%
  }
  {%
    \hfill$\blacksquare$%
  }

\makeatletter
\def\@tocline#1#2#3#4#5#6#7{\relax
  \ifnum #1>\c@tocdepth 
  \else
    \par \addpenalty\@secpenalty\addvspace{#2}%
    \begingroup \hyphenpenalty\@M
    \@ifempty{#4}{%
      \@tempdima\csname r@tocindent\number#1\endcsname\relax
    }{%
      \@tempdima#4\relax
    }%
    \parindent\z@ \leftskip#3\relax \advance\leftskip\@tempdima\relax
    \rightskip\@pnumwidth plus4em \parfillskip-\@pnumwidth
    #5\leavevmode\hskip-\@tempdima
      \ifcase #1
       \or\or \hskip 1em \or \hskip 2em \else \hskip 3em \fi%
      #6\nobreak\relax
    \hfill\hbox to\@pnumwidth{\@tocpagenum{#7}}\par
    \nobreak
    \endgroup
  \fi}
\makeatother

\title[Fedder type criteria for quasi-$F$-splitting II]
{Fedder type criteria for quasi-$F$-splitting II}
\author{Tatsuro Kawakami, Teppei Takamatsu, and Shou Yoshikawa}
\address{Graduate School of Mathematical Sciences, University of Tokyo, 3-8-1 Komaba,
Meguro-ku, Tokyo 153-8914, Japan}
\email{kawakami@ms.u-tokyo.ac.jp}

\address{Department of Mathematics, Graduate School of Science, Kyoto University, Kyoto 606-8502, Japan}
\email{teppeitakamatsu.math@gmail.com}

\address{Institute of Science Tokyo, Tokyo 152-8551, Japan}
\email{yoshikawa.s.9fe9@m.isct.ac.jp}
\begin{document}

\begin{abstract}
In this paper, we apply Fedder-type criteria for quasi-$F$-splitting to provide explicit computations of quasi-$F$-split heights for Calabi-Yau hypersurfaces, bielliptic surfaces, Fano varieties, and rational double points.
We also find interesting phenomena concerned with inversion of adjunction, fiber products, Fano varieties, and general fibers of fibrations.
\end{abstract}

\maketitle

\setcounter{tocdepth}{2}

\tableofcontents

\section{Introduction}

The notions of \emph{quasi-$F$-splitting} and \emph{quasi-$F$-split height} extend and quantify the classical concept of $F$-splitting. 
For an $\mathbb{F}_p$-scheme $X$, having quasi-$F$-split height equal to one is equivalent to being $F$-split, and $X$ is said to be \emph{quasi-$F$-split} if its quasi-$F$-split height is finite. 
The concept of quasi-$F$-splitting was originally introduced by Yobuko \cite{Yobuko19} and has since been studied in greater depth in \cite{KTTWYY, KTTWYY2, KTTWYY3, KTY1, TWY}, among others. 
This paper is a continuation of \cite{KTY1}.

For $F$-splitting, there exists a well-known and powerful criterion---\emph{Fedder’s criterion}---which enables one to determine whether a hypersurface (or more generally a complete intersection) is $F$-split. 
This enables us to compute the $F$-splitting of hypersurfaces. 
In \cite{KTY1}, we generalized Fedder’s criterion and established analogous \emph{Fedder-type criteria} for determining the quasi-$F$-split height of complete intersections. 
Depending on the geometric or algebraic conditions on $X$, several variants of these criteria were obtained (see \emph{loc.\ cit.} for details).

In this paper, we investigate several interesting consequences of applying these Fedder-type criteria for quasi-$F$-splitting. 
Specifically, we establish the following results.

\begin{description}
\item[Inversion of adjunction] \textup{}\\
We construct an example of a non-quasi-$F$-split hypersurface $X$ whose hyperplane section $Y$ is quasi-$F$-split (Example~\ref{eg:counterexample to inversion of adjunction}). 
Conversely, if $X \subseteq \mathbb{P}^N$ is a hypersurface of degree $d$ and $Y$ is the intersection of $N+1-d$ general hyperplanes, then $\sht(Y) \ge \sht(X)$ (Proposition~\ref{cor:hypersurface inversion of adjunction}). 
The same holds for hypersurfaces in weighted projective spaces, providing useful upper bounds for the quasi-$F$-split heights of certain Fano varieties (cf.~Proposition \ref{prop:sm del Pezzo}). 
For related variants of inversion of adjunction, see~\cite[Section~4]{KTTWYY}, \cite[Section~5]{TWY}, and \cite[Section~5.4]{Kawakami-Tanaka2}.

\item[Fiber products] \textup{}\\
We show that if $X$ and $Y$ are complete intersections and $X \times Y$ is not quasi-$F$-split, then at least one of $X$ or $Y$ must be $F$-split (Proposition~\ref{prop:fiber product not quasi-F-split}). 
Conversely, if $X$ is $F$-split, then $\sht(X \times Y)=\sht(Y)$ (Proposition~\ref{prop:fiber products}). 
Here $X$ and $Y$ need not to be complete intersections. 
Our proof uses a Fedder-type criterion for more general rings~\cite[Theorem~4.8]{KTY1}. 
After the appearance of the first version of this paper, Yobuko~\cite[Theorem~1.3]{YobukoHodgeWitt} independently proved similar results without the complete-intersection assumption, using a completely different method.

\item[General fibers] \textup{}\\
We show that quasi-$F$-splitting is inherited by a general fiber when the generic fiber has trivial canonical divisor and is a complete intersection in a projective space over the function field of the base (Corollary~\ref{cor:fiber space}). 
As a consequence, we deduce that quasi-$F$-split surfaces do not admit quasi-elliptic fibrations (Corollary~\ref{thm:no quasi-elliptic fibration}). 
In contrast, when the canonical divisor is non-trivial, this inheritance fails. 
For example, let $X$ be the smooth Fano threefold
\[
\{\,x_0y_0^2 + x_1y_1^2 + x_2y_2^2 = 0\,\}
\subset \mathbb{P}^2_{[x_0:x_1:x_2]} \times \mathbb{P}^2_{[y_0:y_1:y_2]}
\]
over a perfect field of characteristic $p=2$. 
A Fedder-type criterion shows that $X$ is quasi-$F$-split of height~2. 
However, the projection $X \to \mathbb{P}^2_{[x_0:x_1:x_2]}$ defines a \emph{wild conic bundle}, whose fibers are all non-reduced and hence not quasi-$F$-split.

\item[Calabi--Yau hypersurfaces of arbitrary Artin--Mazur height] \textup{}\\
As an application of a Fedder-type criterion, we prove the existence of a Calabi--Yau hypersurface over $\overline{\mathbb{F}}_p$ whose Artin--Mazur height $h$ can be any positive integer $h \in \mathbb{Z}_{>0}$ and for any prime $p$ (Theorem \ref{thm:unbounded}). 
Note that for Calabi--Yau varieties, the Artin--Mazur height coincides with the quasi-$F$-split height \cite{Yobuko19}.

\item[Bielliptic surfaces] \textup{}\\
Smooth projective surfaces of Kodaira dimension $0$ are classified into four types: abelian, bielliptic, K3, and Enriques surfaces. 
For all but the bielliptic case, Yobuko~(\cite{Yobuko19}, \cite{Yobuko2}, \cite{YobukoHodgeWitt}) completely determined the quasi-$F$-split height. 
We determine $\sht(X)$ in the remaining case, when $X$ is (quasi-)bielliptic. 
More precisely, we show that $\sht(X) = \sht(\widetilde{X})$ for the abelian surface cover $\widetilde{X}$ of $X$, except for a few exceptional cases. 
In those exceptional cases---including quasi-bielliptic surfaces---we prove that $X$ is \emph{not} quasi-$F$-split (Theorem~\ref{thm:bielliptic}).
This gives the first example of a finite \'{e}tale morphism along which
quasi-$F$-splitting does not descend.
Moreover, this example arises from a morphism between varieties with trivial canonical bundles (for which, $F$-splitting automatically descends).

\item[Fano varieties] \textup{}\\
We construct a non-quasi-$F$-split smooth Fano $d$-fold for every $d>2$. 
On the other hand, we prove that every smooth del Pezzo surface (that is, a smooth Fano surface) has quasi-$F$-split height at most two and hence is quasi-$F$-split (Proposition~\ref{prop:sm del Pezzo}; see also~\cite[Theorem~F]{KTTWYY} for another proof). 
Note that non-$F$-split del Pezzo surfaces exist when $p \le 5$~\cite[Example~5.5]{Hara}. 
For quasi-$F$-splitting of log del Pezzo surfaces, see \cite[Theorem~B]{KTTWYY} and \cite[Theorem~D]{KTTWYY2}. 
For quasi-$F$-splitting of higher-dimensional Fano varieties, we refer to \cite{Kawakami-Tanaka2, Kawakami-Tanaka4}, where it is shown that every smooth Fano threefold of Picard rank $>1$ or Fano index $>1$ is quasi-$F$-split.

\item[Rational double points] \textup{}\\
Rational double points (RDPs) in characteristic $p$ are $F$-split when $p>5$, but may fail to be $F$-split when $p \le 5$~\cite{Hara2}. 
By combining a Fedder-type criterion for quasi-$F$-splitting with Artin’s classification of RDPs in low characteristics~\cite{Artin}, we determine the quasi-$F$-split heights of all RDPs and, in particular, show that all RDPs are quasi-$F$-split (Theorem~\ref{rdp}). 
For instance, we prove that an RDP defined by
\[
\{ z^2 + x^2y + xy^n = 0 \} \quad (n \ge 2)
\]
in characteristic $p=2$ has quasi-$F$-split height $\lceil \log_2 n \rceil$, which is unbounded as $n$ increases. 
Yobuko had previously computed quasi-$F$-split heights of RDPs (except those of type $D_n$ in $p=2$) in unpublished work using detailed local-cohomology calculations---a completely different approach from ours. 
In general, every two-dimensional klt singularity is known to be quasi-$F$-split~\cite[Theorem~C]{KTTWYY}, but this relies on methods involving the Cartier operator and does not yield explicit height computations.
\end{description}

\begin{ack}
The authors wish to express our gratitude to Hiromu Tanaka, Fuetaro Yobuko, and Jakub Witaszek for valuable discussion.
They are also grateful to Shunsuke Takagi, Kenta Sato, Masaru Nagaoka, Naoki Imai, Tetsushi Ito, Yuya Matsumoto, Ippei Nagamachi, and Yukiyoshi Nakkajima for helpful comments.
The first author was supported by JSPS KAKENHI Grant number JP19J21085.
The second author was supported by JSPS KAKENHI Grant number JP19J22795, JP22KJ1780, and JP25K17228. 
The third author was supported by JSPS KAKENHI Grant number JP24K16889.
\end{ack}

\section{Preliminaries}

\subsection{Notation and terminologies}
Let $S$ be a ring of characteristic $p>0$.
\begin{itemize}
    \item $F \colon S \to S$ denotes the absolute Frobenius homomorphism.
    If $F$ is finite, then $S$ is said to be \emph{$F$-finite}.
    \item For an $S$-module $M$, we define a new $S$-module structure on the same underlying abelian group by
    \[
    a \cdot m := a^pm
    \]
    for $a \in S$ and $m \in M$.
    This module is denoted by $F_*M$.
    To distinguish the $S$-action on $F_*M$ from that on $M$, we write elements of $F_*M$ as $F_*m$.
    Thus,
    \[
    aF_*m = F_*(a^pm)
    \]
    for $a \in S$ and $m \in M$.
    With this notation, the Frobenius map
    \[
    S \to F_*S,\quad a \mapsto F_*(a^p) = aF_*1
    \]
    is an $S$-module homomorphism.
    We similarly define the iterated versions $F^n_*M$ and $F^n_*m$.
    \item Let $I$ be an ideal of $S$.
    We define $I^{[p]}$ to be the ideal generated by
    $\{a^p \mid a \in I\}$.
    Furthermore, we inductively set $I^{[p^n]}:=(I^{[p^{n-1}]})^{[p]}$ for $n \in \Z_{>0}$.
    Note that $I \cdot F^n_*S = F^n_*I^{[p^n]}$.
    \item A scheme $X$ is called a {\em variety} if it is an integral scheme that is separated and of finite type over a field $k$.
    \item Let $A$ be a ring.
    Since the polynomial ring $A[x_1,\ldots,x_N]$ is a free $A$-module with basis given by monomials, every element
    \[
    f \in A[x_1,\ldots,x_N]
    \]
    admits a unique expression
    \[
    f=\sum a_i M_i,
    \]
    where $a_i \in A$ and $M_i$ are distinct monic monomials.
    This expression is called the \emph{monomial decomposition} of $f$ over $A$.
\end{itemize}

Following \cite{KTY1}, we use the following notation.
\begin{itemize}
    \item 
    For any $n \in \Z_{>0}$ and an $\F_p$-algebra $A$, $W_{n} (A)$ denotes the ring of Witt vectors of length $n$.
    We denote the quotient ring $W_n (A)/ (p)$ by $\var{W}_n (A)$.
    We denote the Frobenius, Verschiebung, and restriction morphisms by
    \begin{eqnarray*}
    &F& \colon W_n (A) \rightarrow W_n (A), \\
    &V& \colon W_n (A) \rightarrow W_{n+1} (A),\\
    &R& \colon W_{n+1} (A) \rightarrow W_n (A),
    \end{eqnarray*}
    respectively. For $a \in A$, $[a]\in W(A)$ denotes the Teichm\"uller lift of $a$. 
    For an ideal $I \subset A$, we put
    \[
    W_n (I) := \ker (W_n (A) \rightarrow W_n (A/I)).
    \]
    \item 
     For any $n \in \Z_{>0}$ and an $\F_p$-scheme $X$, $W_n \sO_X$ denotes the sheaf of rings of Witt vectors of length $n$, $\var{W}_n \sO_X$ denotes the quotient ring sheaf $W_{n} \sO_X/(p)$,
     and $W_n X$ denotes the scheme $(X, W_{n} \sO_X)$.
     For any morphism of $\F_p$-schemes $f \colon X\rightarrow Y$, $W_n f$ denotes the induced morphism $W_n X \rightarrow W_n Y$.
     We also define $F$, $V$, and $R$ similarly as for $W_n (A)$.
\end{itemize}

\begin{lem}\label{lem:etale base change for Witt Frobenius}
Let $f \colon Y \to X$ be an \'etale morphism of $F$-finite $\F_p$-schemes.
Then we have 
\[
(W_{n}f)^*F_*W_n\sO_X \cong F_*W_n\sO_Y.
\]
\end{lem}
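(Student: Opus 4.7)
The plan is to reinterpret the desired isomorphism as an instance of flat base change along a Cartesian Frobenius square, and then to verify the Cartesian property via rigidity of the étale site under nilpotent thickenings.

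First, I would invoke the well-known fact that the functor $W_n$ preserves étale morphisms, so $W_nf\colon W_nY \to W_nX$ is étale and in particular flat. Since $X$ and $Y$ are $\F_p$-schemes, the Witt Frobenius on $W_n\sO_X$ and $W_n\sO_Y$ coincides with $W_n$ applied to the absolute Frobenius on $X$ and $Y$ respectively; functoriality of $W_n$ combined with the identity $f \circ F_Y = F_X \circ f$ then gives a commutative square
\[
\xymatrix{
W_nY \ar[r]^-{F} \ar[d]_-{W_nf} & W_nY \ar[d]^-{W_nf} \\
W_nX \ar[r]^-{F} & W_nX.
}
\]
Granting that this square is Cartesian, flat base change along the flat morphism $W_nf$ yields
\[
(W_nf)^*F_*W_n\sO_X \;\cong\; F_*(W_nf)^*W_n\sO_X \;=\; F_*W_n\sO_Y,
\]
which is the desired isomorphism.

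Next, I would verify the Cartesian property using étale rigidity. The kernel $VW_n\sO_X$ of the restriction $R\colon W_n\sO_X \to \sO_X$ is a nilpotent ideal, so $X \hookrightarrow W_nX$ is a nilpotent closed immersion; by rigidity of the étale site, étale $W_nX$-schemes and their morphisms are determined by their reductions to $X$. Both $W_nY$ and the fiber product $W_nY \times_{W_nX, F} W_nX$ are étale over $W_nX$ (the latter since $W_nf$ is étale and étaleness is stable under base change), so it suffices to show that the canonical comparison $W_nY \to W_nY \times_{W_nX, F} W_nX$ is an isomorphism after reduction modulo $V$. Modulo $V$ the Witt Frobenius reduces to the absolute Frobenius, and the comparison becomes the relative Frobenius $F_{Y/X}\colon Y \to Y \times_{X, F_X} X$, which is an isomorphism because $f$ is étale.

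The step requiring the most care is the rigidity argument---specifically, checking that the natural comparison reduces to $F_{Y/X}$ without spurious Frobenius twists, and invoking (in the $F$-finite setting) the fact that $W_n$ preserves étale morphisms. Once these are in place, the desired isomorphism follows formally.
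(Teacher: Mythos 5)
Your proof is correct, but it takes a genuinely different route from the paper's. The paper works affine-locally with the ring map $\phi \colon F_*W_n(A) \otimes_{W_n(A)} W_n(B) \to F_*W_n(B)$: it invokes \cite[Propositions A.5 and A.8]{LZ} to see that source and target are free $F_*W_n(A)$-modules of the same rank, and then proves surjectivity of $\phi$ by an explicit Witt-vector calculation, writing $b = b'^{\,p^r} a$ via $F^r_*A \otimes_A B \cong F^r_*B$ and observing $V^{r-1}([a])[b'^p] = V^{r-1}[b]$. Your reformulation is instructive: unwinding the $F_*$ notation shows that $\phi$ being an isomorphism of rings is \emph{literally} the statement that the Witt Frobenius square on $W_nY \to W_nX$ is Cartesian, so both proofs are aiming at the same target. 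You establish it instead by étale rigidity: $W_nf$ is étale (again \cite[Proposition A.8]{LZ}), $X \hookrightarrow W_nX$ is a nilpotent thickening, so isomorphy of the comparison map can be checked after reduction to $X$, where it becomes the relative Frobenius $F_{Y/X}$ of the étale morphism $f$. This is more conceptual and isolates where étaleness enters (via $F_{Y/X}$ being an isomorphism), at the cost of relying on more machinery (flat base change along the finite affine morphism $F_{W_nX}$, topological invariance of the étale site, and compatibility of $W_n$ with closed reduction). One small point you should make explicit: the identification of the reduction $W_nY \times_{W_nX} X$ with $Y$ — equivalently $W_n(B) \otimes_{W_n(A)} A \cong B$ — is itself part of the Langer--Zink package on étale morphisms and deserves a citation rather than being absorbed silently into ``rigidity''; once that is pinned down, your argument closes cleanly.
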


\begin{proof}
Localizing at a point of $Y$, we may assume $Y=\Spec B$ and $X=\Spec A$. 
We prove that the natural ring homomorphism
\[
\phi \colon F_*W_n(A) \otimes_{W_n(A)} W_n(B) \to F_*W_n(B)
\]
is an isomorphism.
By \cite[Propositions A.8 and A.5]{LZ}, both sides are free $F_*W_n(A)$-modules of the same rank.
Therefore it suffices to show that $\phi$ is surjective.
Take $b \in B$ and $1 \leq r \leq n$.
Since $F^r_*A \otimes_{A} B \cong F^r_*B$, there exist $b' \in B$ and $a \in A$ such that $b=b'^{p^r}a$.
Thus,
\[
\phi(V^{r-1} ([a]) \otimes [b'])=V^{r-1} ([a] )\,[b'^p]=V^{r-1} ([ab'^{p^r}] )=V^{r-1}[b].
\]
Hence $\phi$ is surjective.
\end{proof}

\begin{prop}\label{prop:completion for witt ring}
Let $(S,\m)$ be an $F$-finite Noetherian local ring of positive characteristic and $\widehat{S}$ its completion.
Then $W_n(S)$ is a Noetherian local ring.
Let $\n$ be the maximal ideal of $W_n(S)$.
Then the $\n$-adic completion of $W_n(S)$ is isomorphic to $W_n(\widehat{S})$.
\end{prop}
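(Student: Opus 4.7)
The plan is to prove the two assertions in turn: first that $W_n(S)$ is a Noetherian local ring, and then that its $\n$-adic completion is $W_n(\widehat{S})$.

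For the first, I would analyze $W_n(S)$ through the descending chain of ideals
\[
W_n(S) \;\supset\; V W_{n-1}(S) \;\supset\; V^2 W_{n-2}(S) \;\supset\; \cdots \;\supset\; V^n W_0(S) \;=\; 0.
\]
The successive quotients $V^i W_{n-i}(S)/V^{i+1} W_{n-i-1}(S)$ are identified with $F^i_* S$ as $S$-modules via $V^i([a]) \mapsto a$, using the formula $[s]\cdot V^i([a]) = V^i([s^{p^i}a])$; these are finitely generated $S$-modules by $F$-finiteness, so an iterated extension argument shows $W_n(S)$ is Noetherian. For locality, the first-coordinate projection $W_n(S) \twoheadrightarrow S$ is a ring homomorphism sending $\n$ to $\m$, and any Witt vector $x=(a_0,\ldots,a_{n-1})$ with $a_0 \in S^\times$ is a unit in $W_n(S)$: one writes $x = [a_0]\bigl(1 + V(z)\bigr)$ for some $z \in W_{n-1}(S)$, and $V(z)$ is nilpotent since the ideal $V W_{n-1}(S)$ satisfies $(V W_{n-1}(S))^n = 0$ (using $V(z_1)\cdots V(z_n) = p^{n-1}V(z_1\cdots z_n) \in V^n W_0(S) = 0$).

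For the completion, the key functoriality is that $W_n$ commutes with inverse limits and sends a surjection $A \twoheadrightarrow A/I$ to a surjection with kernel $W_n(I)$; consequently
\[
W_n(\widehat{S}) \;=\; \varprojlim\nolimits_k W_n(S/\m^k) \;=\; \varprojlim\nolimits_k W_n(S)/W_n(\m^k).
\]
Comparing with $\widehat{W_n(S)} = \varprojlim_k W_n(S)/\n^k$, it suffices to show that the two filtrations $\{\n^k\}$ and $\{W_n(\m^k)\}$ are mutually cofinal. The easy direction $\n^{k'} \subset W_n(\m^k)$ follows by applying the filtration argument above to the Artinian local ring $S/\m^k$: the quotient $W_n(S)/W_n(\m^k) \cong W_n(S/\m^k)$ is itself Artinian local, so its maximal ideal is nilpotent.

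The reverse containment $W_n(\m^{k'}) \subset \n^k$ is the main obstacle. The plan is to use the integral identity $(a_0, \ldots, a_{n-1}) = \sum_{i=0}^{n-1} V^i([a_i])$ (verified via ghost coordinates over $\Z$), reducing matters to showing $V^i([a]) \in \n^k$ for $a \in \m^{k'}$ and $k'$ sufficiently large, for each $0 \leq i \leq n-1$. Fixing generators $\m = (f_1, \ldots, f_r)$, a standard pigeonhole argument on monomials yields $\m^M \subset (\m^{k-1})^{[p^i]}$ for $M \geq (k-1)p^i + r(p^i - 1)$, so one writes $a = \sum_l u_l v_l^{p^i}$ with each $v_l$ a monomial of degree $k-1$. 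The Frobenius--Verschiebung relation $V^i([v^{p^i} u]) = [v] \cdot V^i([u])$ together with the additivity of $V^i$ then yields
\[
V^i([a]) \;=\; \sum\nolimits_l [v_l] \cdot V^i([u_l]) \;\in\; \n^{k-1} \cdot \n \;=\; \n^k \qquad (i \geq 1),
\]
since each $V^i([u_l])$ has zero first coordinate. The remaining case $i = 0$ is handled by the non-additivity of the Teichm\"uller lift: writing $a = \sum_l a_l$ with each $a_l$ a single $k'$-fold product of generators, we get $[a] = \sum_l [a_l] + (0, \delta_1, \ldots, \delta_{n-1})$, where $\sum_l [a_l] \in \n^{k'}$ and the Witt-addition corrections satisfy $\delta_j \in \m^{k' p^j}$; applying the decomposition identity again to write $(0, \delta_1, \ldots, \delta_{n-1}) = \sum_{j \geq 1} V^j([\delta_j])$, each summand lies in $\n^k$ by the higher-$i$ cases already established, yielding $[a] \in \n^k$ for $k'$ sufficiently large.
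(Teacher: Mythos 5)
Your overall architecture is sound and largely parallels the paper's: establish that $W_n(S)$ is Noetherian local via the Verschiebung filtration, identify $W_n(\widehat{S})$ as the completion along $\{W_n(\m^k)\}$, and prove the two filtrations cofinal. The first assertion and the easy inclusion $\n^{k'} \subset W_n(\m^k)$ are fine. The genuine gap is in the reverse containment, at $1 \le i \le n-2$. The displayed identity $V^i([a]) = \sum_l [v_l]\,V^i([u_l])$ is false in general: the Teichm\"uller lift is not additive, so $[a] - \sum_l [u_l v_l^{p^i}]$ lies in $V W_{n-1}(S)$ but typically no deeper, whereas $\ker(V^i)$ consists of Witt vectors whose first $n-i$ coordinates vanish. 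That difference is therefore annihilated by $V^i$ only when $n-i \le 1$, i.e.\ $i = n-1$; for $1 \le i \le n-2$ the same Witt-addition corrections you track carefully in the $i=0$ case reappear, and your argument silently drops them. (The issue is vacuous when $n \le 2$, which may be why it is easy to miss.)

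The repair is a single downward induction on $i$ from $n-1$ to $0$. The base case $i = n-1$ is exactly your display, valid because $VW_{n-1}(S) \subseteq \ker(V^{n-1})$; in the inductive step one has $V^i([a]) = \sum_l [v_l]V^i([u_l]) + \sum_{j \ge 1} V^{i+j}([e_j])$, where $e_j$ is the $j$-th Witt-addition correction, and the error terms are absorbed by the already-proved cases $i+j>i$ exactly as in your $i=0$ analysis. One further caution: the minimal bound $M \ge (k-1)p^i + r(p^i-1)$ forces only $v_l^{p^i}$, not $u_l$, into a deep power of $\m$, while the $e_j$ are polynomials of degree $p^j$ in the $b_l := u_l v_l^{p^i}$; to place each $e_j$ in $\m^{l_{i+j}(k)}$ you should take $k'$ large enough that $\m^{k'} \subseteq \m^L\,(\m^{k-1})^{[p^i]}$ with $L$ large, so the $u_l$ themselves have large $\m$-adic valuation. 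For what it is worth, the paper's own proof is even terser at this step (it asserts $[\m^k] \subseteq \n^k$ without proof and treats only elements of the form $a^{p^n}$), so your instinct to make the Witt-vector corrections explicit is the right one---it simply needs to be applied at every level $i < n-1$, not just at $i=0$.
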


\begin{proof}
By \cite[Proposition A.4]{LZ}, $W_n(S)$ is Noetherian.
Let $I$ be the kernel of $R^{n-1} \colon W_n(S) \to S$.
Then $I=\mathrm{Im}(V)$.
Since $(\mathrm{Im}V)^i \subseteq \mathrm{Im}(V^i)$ by the ring structure of $W_n(S)$ and $\mathrm{Im}(V^n)=0$, it follows that $I$ is nilpotent.
As a nilpotent ideal is contained in every maximal ideal, we conclude that $W_n(S)$ is local.

Next, consider the diagram
\[
\xymatrix{
W_n(S) \ar[r] \ar[d] \ar@{}[rd]|{\circlearrowright} & W_n(\widehat{S}) \ar[d] \\
W_n(S/\m^k) \ar[r] & W_n(S/\m^k).
}
\]
We note that $W_n(\widehat{S})$ coincides with the inverse limit of the system $\{W_n(S/\m^k)\}$, since the limit can be computed in the category of sets.
Therefore, $W_n(\widehat{S})$ is the completion of $W_n(S)$ with respect to $\{W_n(\m^k)\}_k$.
Thus it remains to show that $\{W_n(\m^k)\}$ and $\{\n^k\}$ define the same topology.

Fix $k>0$.
Since $W_n(S/\m^k)$ is Artinian by \cite[Proposition A.4]{LZ}, the image of $\n^l$ is zero for sufficiently large $l$, hence $\n^l \subseteq W_n(\m^k)$.
On the other hand, there exists $l>0$ such that $\m^l \subseteq (\m^k)^{[p^n]}$.
We claim that $W_n(\m^l) \subseteq \n^k$.
Since $W_n(\m^l)$ is generated by $V^{r-1}[\m^l]$ for $r \leq n$, it suffices to show that $V^{r-1}[(\m^k)^{[p^n]}] \subseteq \n^k$ for $r \leq n$.
Now, $[\m^k] \subseteq \n^k$ and $\mathrm{Im}(V) \subseteq \n$.
Hence, for $a \in \m^k$ we have
\[
V^{r-1}([a^{p^n}])=[a^{p^{n-r}}]\cdot V[1] \in \n^k.
\]
Therefore, $W_n(\m^l) \subseteq \n^k$, as desired.
\end{proof}

\subsection{Quasi-\texorpdfstring{$F$}{F}-splitting}
In this subsection, we collect some basic properties of quasi-$F$-splitting.

\begin{defn}[\cite{Yobuko2}]
\label{defn:quasi-F-split}
Let $X$ be an $F$-finite $\F_p$-scheme.
For a positive integer $n$, we say that $X$ is \textit{$n$-quasi-$F$-split} if 
there exists a $W_n\sO_X$-homomorphism $\phi \colon F_*W_n\sO_X \to \sO_X$ making the following diagram commute:
\[
\xymatrix{
W_n\sO_X \ar[r]^-{F} \ar[d]_-{R^{n-1}} & F_*W_n\sO_X \ar@{-->}[ld]^{\exists \phi} \\
\sO_X.
}
\]

We define the \emph{quasi-$F$-split height} $\sht(X)$ as the infimum of positive integers $n$ such that $X$ is $n$-quasi-$F$-split.
If no such $n$ exists, we set $\sht(X)=\infty$.
We say that $X$ is \emph{quasi-$F$-split} if $\sht(X)<\infty$.
If $\sht(X)=1$, then we say that $X$ is \emph{$F$-split}.
Note that $X$ is $F$-split if and only if the Frobenius map
$\sO_X \to F_*\sO_X$
splits as an $\sO_X$-module homomorphism.
\end{defn}






\begin{rmk}\label{rmk:evaluation}
Let $X$ be an $F$-finite $\F_p$-scheme and $n$ a positive integer.
Then $X$ is $n$-quasi-$F$-split if and only if the evaluation map
\[
\Hom_{W_nX}(F_*W_n\sO_X,\sO_X) \to H^0(X,\sO_X)
\]
is surjective.
\end{rmk}

\begin{prop}\label{prop:fiber space}
Let $\pi \colon Y \to X$ be a morphism of $F$-finite  $\F_p$-schemes.
Suppose that $\sO_X \to \pi_*\sO_Y$ splits as an $\sO_X$-module homomorphism. 
If $Y$ is $n$-quasi-$F$-split, then so is $X$.
\end{prop}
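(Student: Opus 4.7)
The plan is to construct the required splitting $\phi \colon F_*W_n\sO_X \to \sO_X$ by transporting the given splitting on $Y$ back to $X$ via $\pi_*$ and the hypothesized $\sO_X$-linear retraction.

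Concretely, let $\psi \colon F_*W_n\sO_Y \to \sO_Y$ be a $W_n\sO_Y$-linear splitting witnessing $n$-quasi-$F$-splitness of $Y$, so $\psi \circ F = R^{n-1}$, and let $s \colon \pi_*\sO_Y \to \sO_X$ be an $\sO_X$-linear retraction of the natural map $\iota \colon \sO_X \to \pi_*\sO_Y$. Applying $\pi_*$ to $\psi$ yields $\pi_*\psi \colon \pi_*F_*W_n\sO_Y \to \pi_*\sO_Y$. The functoriality of Witt vectors gives a natural ring homomorphism $W_n\pi^\sharp \colon W_n\sO_X \to \pi_*W_n\sO_Y$, and compatibility with the Frobenius yields an induced $W_n\sO_X$-module homomorphism
\[
\beta \colon F_*W_n\sO_X \to \pi_*F_*W_n\sO_Y.
\]
I would then define
\[
\phi := s \circ \pi_*\psi \circ \beta \colon F_*W_n\sO_X \to \sO_X.
\]

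The verification then has two parts. First, one checks that $\phi$ is $W_n\sO_X$-linear: $\beta$ is $W_n\sO_X$-linear by construction; $\pi_*\psi$ is $\pi_*W_n\sO_Y$-linear and hence $W_n\sO_X$-linear via $W_n\pi^\sharp$; and $s$ is $\sO_X$-linear, so to check $W_n\sO_X$-linearity on the last arrow one uses that the $W_n\sO_X$-action on $\pi_*\sO_Y$ factors through $R^{n-1} \colon W_n\sO_X \to \sO_X$ followed by $\iota$, which matches the action induced by $W_n\pi^\sharp$ composed with $\pi_*(R^{n-1})$. Second, one checks the commutativity
\[
\phi \circ F = R^{n-1}.
\]
Here one uses the commutativity of Frobenius with the natural map (so $\beta \circ F = F \circ W_n\pi^\sharp$), the identity $\psi \circ F = R^{n-1}$ on $Y$ (and hence $\pi_*\psi \circ \pi_*F = \pi_*R^{n-1}$ after pushforward), and the compatibility $\pi_*R^{n-1} \circ W_n\pi^\sharp = \iota \circ R^{n-1}$ of restriction with functoriality. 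These identities combine to give $\pi_*\psi \circ \beta \circ F = \iota \circ R^{n-1}$, after which $s \circ \iota = \mathrm{id}_{\sO_X}$ finishes the computation.

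The main obstacle is purely notational bookkeeping: carefully tracking module structures (since $s$ is only $\sO_X$-linear, not $W_n\sO_X$-linear, and $\pi_*\sO_Y$ acquires its $W_n\sO_X$-action through two a priori different routes) and verifying that the diagrams of Frobenii, restrictions, and functoriality maps commute. No nontrivial algebraic input beyond the definitions and the functoriality of $W_n(-)$ with the Frobenius and restriction maps is needed.
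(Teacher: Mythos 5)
Your proof is correct and takes essentially the same approach as the paper: you transport the splitting $\psi$ on $Y$ back to $X$ by precomposing with the natural Witt-vector functoriality map $\beta$ induced by $\pi^\sharp$, pushing $\psi$ forward along $\pi$, and postcomposing with the retraction $s$, then verify $\phi \circ F = R^{n-1}$ using $s \circ \iota = \mathrm{id}$. The paper's proof is the same composition, written slightly more tersely via the identification $(W_n\pi)_*W_n\sO_Y \cong W_n(\pi_*\sO_Y)$; your version simply makes the linearity and commutativity checks more explicit.
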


\begin{proof}
Suppose that $Y$ is $n$-quasi-$F$-split.
Then there exists $\phi \colon F_*W_n\sO_Y \to \sO_Y$ fitting into the commutative diagram of Definition \ref{defn:quasi-F-split}.
Applying $(W_n \pi)_*$, we obtain
\[
\xymatrix{
W_n(\pi_*\sO_Y) \ar[r]^-F \ar[d]_-{R^{n-1}} & F_*W_n(\pi_*\sO_Y) \ar[ld]^-{W_n\pi_*\phi} \\
\pi_*\sO_Y,
}
\]
where we used $(W_{n}\pi)_*W_n\sO_Y \cong W_n(\pi_*\sO_Y)$.
Thus we obtain a composition
\[
F_*W_n\sO_X \to F_*W_n\pi_*\sO_Y \overset{W_n\pi_*\phi}{\longrightarrow} \pi_*\sO_Y \to \sO_X,
\]
which maps $1$ to $1$, where the last map is the given splitting of $\sO_X \to \pi_*\sO_Y$.
Therefore, $X$ is $n$-quasi-$F$-split.
\end{proof}

\begin{rmk}
Proposition \ref{prop:fiber space} applies, for instance, to an algebraic fiber space, to a finite morphism of degree prime to $p$ between normal integral schemes, and to a base change by a field extension.
\end{rmk}

\begin{prop}\label{prop:small birational case}
Let $X$ (resp.~$Y$) be a Noetherian $F$-finite $\F_p$-scheme satisfying Serre's condition $(S_2)$
and $U \subseteq X$ (resp.~$V \subseteq Y$) an open subscheme satisfying
$\codim_X(X \backslash U)\geq 2$ (resp.~$\codim_Y(Y \backslash V) \geq 2$).
Suppose that there exists an isomorphism $U\cong V$.
Then we have $\sht(X)=\sht(Y)$.
\end{prop}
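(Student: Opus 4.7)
The plan is to reduce, by symmetry and the isomorphism $U \cong V$, to proving that $\sht(X) = \sht(U)$ whenever $X$ is Noetherian, $F$-finite, and $(S_2)$, with $U\subseteq X$ open and $\codim_X(X\setminus U)\geq 2$. Granted this, applying the same assertion to $Y\supseteq V$ and using $U\cong V$ yields $\sht(X) = \sht(U) = \sht(V) = \sht(Y)$. The direction $\sht(U)\leq \sht(X)$ is immediate: any $W_n\sO_X$-splitting $\phi\colon F_*W_n\sO_X\to \sO_X$ restricts to a $W_n\sO_U$-splitting on $U$. For the converse, by Remark~\ref{rmk:evaluation} I need the global section $1\in H^0(X,\sO_X)$ to lie in the image of the evaluation map
\[
e_X\colon \Hom_{W_nX}(F_*W_n\sO_X,\sO_X)\longrightarrow H^0(X,\sO_X),
\]
and my strategy is to identify $e_X$ with the corresponding $e_U$ via restriction.

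For the target this is automatic: since $\sO_X$ is $(S_2)$ and $\codim_X(X\setminus U)\geq 2$, restriction gives $H^0(X,\sO_X)\xrightarrow{\sim} H^0(U,\sO_U)$. The content is to show the same for the source, for which I would prove that $\mathcal{H}om_{W_n\sO_X}(F_*W_n\sO_X,\sO_X)$ is an $(S_2)$ coherent $\sO_X$-module. Because $\sO_X$ carries its $W_n\sO_X$-module structure through the surjection $R^{n-1}\colon W_n\sO_X\twoheadrightarrow \sO_X$, the standard quotient adjunction gives
\[
\mathcal{H}om_{W_n\sO_X}(F_*W_n\sO_X,\sO_X)\;\cong\;\mathcal{H}om_{\sO_X}(\mathcal{G},\sO_X),
\]
where $\mathcal{G} := F_*W_n\sO_X\otimes_{W_n\sO_X}\sO_X$. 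By $F$-finiteness, $F_*W_n\sO_X$ is a finitely generated $W_n\sO_X$-module, so $\mathcal{G}$ is coherent as an $\sO_X$-module. A routine depth-chasing argument, applied to a free presentation $\sO_X^{\oplus a}\to \sO_X^{\oplus b}\to \mathcal{G}\to 0$ after dualizing to $0\to \mathcal{H}om_{\sO_X}(\mathcal{G},\sO_X)\to \sO_X^{\oplus b}\to \sO_X^{\oplus a}$, shows that the $\sO_X$-linear dual of a coherent sheaf into an $(S_2)$ sheaf is again $(S_2)$; hence the Hom sheaf is $(S_2)$ as an $\sO_X$-module, and its global sections extend uniquely from $U$ to $X$.

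By Lemma~\ref{lem:etale base change for Witt Frobenius} applied to the open immersion $U\hookrightarrow X$, $F_*W_n\sO_X|_U \cong F_*W_n\sO_U$, so the restriction of the Hom sheaf to $U$ is exactly $\mathcal{H}om_{W_n\sO_U}(F_*W_n\sO_U,\sO_U)$. These identifications assemble into a commutative square identifying $e_X$ with $e_U$, so surjectivity of $e_U$ onto $1\in H^0(U,\sO_U)$ transfers to $e_X$. This gives $\sht(X) = \sht(U)$ and hence $\sht(X) = \sht(Y)$. The main technical obstacle is establishing the $(S_2)$ property of the Hom sheaf: the $W_n\sO_X$-module structure does not interact directly with $\sO_X$-depth, and one must first collapse to the $\sO_X$-linear dual via $R^{n-1}$ before invoking the classical depth inequality for duals over an $(S_2)$ ring. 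Once this is in place, the rest is a bookkeeping exercise in $(S_2)$-extension of sections.
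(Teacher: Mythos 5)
Your argument is correct, but it takes a genuinely different route from the paper's. The paper proves this in four lines: since $X$ is $n$-quasi-$F$-split, so is $U$, hence so is $V$; then because $Y$ is $(S_2)$ and $\codim_Y(Y\setminus V)\geq 2$ we have $\iota_*\sO_V=\sO_Y$ for the open immersion $\iota\colon V\hookrightarrow Y$, so $\sO_Y\to\iota_*\sO_V$ splits trivially, and Proposition~\ref{prop:fiber space} applied to $\iota$ gives that $Y$ is $n$-quasi-$F$-split. Your approach instead works directly with the evaluation map of Remark~\ref{rmk:evaluation}, and the technical heart of it is the claim that $\mathcal{H}om_{W_n\sO_X}(F_*W_n\sO_X,\sO_X)$ is a coherent $(S_2)$ $\sO_X$-module. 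The chain you use for this is sound: the adjunction $\Hom_{W_n\sO_X}(F_*W_n\sO_X,\sO_X)\cong\Hom_{\sO_X}(F_*W_n\sO_X\otimes_{W_n\sO_X}\sO_X,\sO_X)$ is valid since $\sO_X$ is a quotient of $W_n\sO_X$ via $R^{n-1}$; coherence of $\mathcal{G}$ uses that $W_n\sO_X$ is Noetherian and $F_*W_n\sO_X$ is a finite $W_n\sO_X$-module by $F$-finiteness (both of which the paper establishes in Section 2); and the depth lemma applied to $0\to\mathcal{H}om_{\sO_X}(\mathcal{G},\sO_X)\to\sO_X^{\oplus b}\to\sO_X^{\oplus a}$ gives depth $\geq 2$ at every point of codimension $\geq 2$ precisely because $\sO_X$ is $(S_2)$. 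What your route buys is a finer statement: you are in effect showing that every $W_n$-splitting on $U$ extends uniquely to one on $X$, which makes the mechanism transparent. What the paper's route buys is brevity and reuse of machinery: it never needs the $(S_2)$ property of the Hom sheaf, only of $\sO_Y$, because Proposition~\ref{prop:fiber space} packages the transfer argument once and for all. (One small remark: invoking Lemma~\ref{lem:etale base change for Witt Frobenius} for the open immersion $U\hookrightarrow X$ is heavier than needed, since $W_n\sO_X|_U=W_n\sO_U$ and $F_*$ commutes with restriction to opens automatically, the Frobenius being a homeomorphism.)
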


\begin{proof}
It suffices to show that if $X$ is $n$-quasi-$F$-split, then so is $Y$.
Since $X$ is $n$-quasi-$F$-split, the same holds for $U$, and in particular for $V$.
As $Y$ satisfies $(S_2)$, we have $\iota_*\sO_V = \sO_Y$.
By Proposition \ref{prop:fiber space}, it follows that $Y$ is $n$-quasi-$F$-split, as desired.
\end{proof}


\begin{prop}\label{prop:etale extension}
Let $f \colon Y \to X$ be a morphism of $F$-finite $\F_p$-schemes.
Suppose that $X$ is $n$-quasi-$F$-split.
\begin{itemize}
    \item[\textup{(1)}] If $f$ is \'etale, then $Y$ is $n$-quasi-$F$-split.
    \item[\textup{(2)}] If $X$ and $Y$ are Noetherian schemes satisfying Serre's condition $(S_2)$ and $f$ is \'etale in codimension one, then $Y$ is $n$-quasi-$F$-split.
\end{itemize}
\end{prop}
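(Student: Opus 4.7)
My plan is to prove (1) by base-changing the given splitting along the morphism $W_nf\colon W_nY\to W_nX$ via Lemma \ref{lem:etale base change for Witt Frobenius}, and then to reduce (2) to (1) by restricting to the \'etale locus and extending using the $(S_2)$ hypothesis.

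For (1), the $n$-quasi-$F$-split structure on $X$ is a $W_n\sO_X$-linear map $\phi\colon F_*W_n\sO_X\to \sO_X$ satisfying $\phi\circ F=R^{n-1}$. Applying $-\otimes_{W_n\sO_X}W_n\sO_Y$, the source becomes $F_*W_n\sO_Y$ by Lemma \ref{lem:etale base change for Witt Frobenius}, so the only remaining point is to identify the target $\sO_X\otimes_{W_n\sO_X}W_n\sO_Y$ with $\sO_Y$. Since $\sO_X=W_n\sO_X/V(W_{n-1}\sO_X)$ as $W_n\sO_X$-modules, this reduces to the ideal equality
\[
V(W_{n-1}\sO_X)\cdot W_n\sO_Y = V(W_{n-1}\sO_Y).
\]
The inclusion $\subseteq$ is immediate from compatibility of $V$ with ring maps; for the reverse, I would recycle the manipulation in the proof of Lemma \ref{lem:etale base change for Witt Frobenius}, writing a local section $b$ of $\sO_Y$ as $b=b'^{p^r}a$ with $a$ pulled back from $\sO_X$ (available by \'etaleness) and using the projection-formula identity $V^{r-1}([a])\cdot[b'^p]=V^{r-1}[b]$ to place $V^{r-1}[b]$ inside $V(W_{n-1}\sO_X)\cdot W_n\sO_Y$. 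The pulled-back map then provides the desired $\phi_Y\colon F_*W_n\sO_Y\to\sO_Y$, with $\phi_Y\circ F=R^{n-1}$ on $Y$ following from the naturality of $F$ and $R$.

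For (2), let $U\subseteq Y$ be the open \'etale locus of $f$, so that $\codim_Y(Y\setminus U)\geq 2$. Applying (1) to $U\to X$, I obtain a splitting $\phi_U\colon F_*W_n\sO_U\to\sO_U$, which I push forward along the open immersion $\iota\colon U\hookrightarrow Y$. Since $Y$ satisfies $(S_2)$, one has $\iota_*\sO_U=\sO_Y$; and because $W_n\sO_Y$ is, on the underlying topological space of $Y$, simply an $n$-tuple of copies of $\sO_Y$ equipped with the Witt ring structure, the same $(S_2)$-extension argument applied componentwise gives $\iota_*W_n\sO_U=W_n\sO_Y$ as sheaves of rings, and hence $\iota_*F_*W_n\sO_U=F_*W_n\sO_Y$ as $W_n\sO_Y$-modules. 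The pushforward $\iota_*\phi_U$ is then the required splitting on $Y$: its image of $F_*1$ agrees with $1$ on $U$, and by $(S_2)$ of $\sO_Y$ it therefore agrees with $1$ globally. The step I regard as most delicate is the ideal computation in (1); the rest of (1) is formal, and (2) is then a standard $(S_2)$-extension argument relying on the componentwise description of $W_n\sO_Y$.
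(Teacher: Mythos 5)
Your proposal is correct and follows essentially the same path as the paper: for (1) you pull back the splitting diagram along $W_nf$ and invoke Lemma~\ref{lem:etale base change for Witt Frobenius} to identify $(W_nf)^*F_*W_n\sO_X$ with $F_*W_n\sO_Y$, and for (2) you restrict to the \'etale locus and extend across codimension $\ge 2$ using $(S_2)$. The only small variations are that the paper simply composes $W_nf^*\phi$ with the natural map $W_nf^*\sO_X\to\sO_Y$ rather than first proving that map is an isomorphism (your isomorphism claim is true and amounts to a mild strengthening, though not needed), and the paper packages the $(S_2)$-extension step for (2) as a citation of Proposition~\ref{prop:small birational case} (which itself rests on Proposition~\ref{prop:fiber space}) instead of spelling it out directly as you do.
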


\begin{proof}
Since $X$ is $n$-quasi-$F$-split, there exists a homomorphism $\phi \colon F_*W_n\sO_X \to \sO_X$ fitting into the commutative diagram
\[
\xymatrix{
W_n\sO_X \ar[r]^-F \ar[d]_{R^{n-1}}  & F_*W_n\sO_X \ar[ld]^-\phi \\
\sO_X. &
}
\]

We first prove (1). 
Assume that $f$ is \'etale.
Pulling back the above diagram via $W_{n}f$ and applying Lemma \ref{lem:etale base change for Witt Frobenius}, we obtain a commutative diagram
\[
\xymatrix{
W_n\sO_Y \ar[r]^-F \ar[d]_{W_nf^*R^{n-1}}  & F_*W_n\sO_Y \ar[ld]^-{W_nf^*\phi} \\
W_nf^*\sO_X. &
}
\]
Composing $W_nf^*\phi$ with the natural map $W_nf^*\sO_X \to W_n\sO_Y$, we conclude that $Y$ is $n$-quasi-$F$-split.

Next, we prove (2).
Let $V \subseteq Y$ be the \'etale locus of $f$ and set $U:=f(V)$.
Since $\mathrm{codim}_Y(Y \setminus V) \geq 2$ by assumption, it follows from Proposition \ref{prop:small birational case} that $\sht(Y)=\sht(V)$.
By (1), $V$ is $n$-quasi-$F$-split, and hence so is $Y$.
\end{proof}

\begin{cor}\label{cor:base change}
Let $k \subseteq k'$ be an algebraic separable extension of $F$-finite fields of positive characteristic.
Let $X$ be an $F$-finite $k$-scheme and set $X':=X \times_k k'$.
Then $\sht(X)=\sht(X')$. 
\end{cor}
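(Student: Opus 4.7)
The plan is to prove the two inequalities $\sht(X) \leq \sht(X')$ and $\sht(X') \leq \sht(X)$ separately.

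The inequality $\sht(X) \leq \sht(X')$ will follow directly from Proposition \ref{prop:fiber space} applied to the projection $\pi \colon X' \to X$. Indeed, any $k$-basis of $k'$ containing $1$ yields a $k$-linear retraction of the inclusion $k \hookrightarrow k'$, and tensoring over $k$ with $\sO_X$ produces an $\sO_X$-linear splitting of $\sO_X \to \pi_*\sO_{X'}$, which is the hypothesis of the proposition.

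For the reverse inequality, I would fix a splitting $\phi \colon F_*W_n\sO_X \to \sO_X$ with $n = \sht(X)$ and construct $\phi' \colon F_*W_n\sO_{X'} \to \sO_{X'}$ by base change along $W_n(k) \to W_n(k')$. The key identifications are $\sO_X \otimes_{W_n(k)} W_n(k') \cong \sO_{X'}$ (using $R^{n-1} \colon W_n(k) \to k$, which gives $k \otimes_{W_n(k)} W_n(k') \cong k'$) and
\[
F_*W_n\sO_X \otimes_{W_n(k)} W_n(k') \;\cong\; F_*W_n\sO_{X'}.
\]
Granting these, one sets $\phi' := \phi \otimes_{W_n(k)} W_n(k')$, and the required Frobenius compatibility $\phi' \circ F = R^{n-1}$ follows from the same identity for $\phi$ by base change. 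When $k'/k$ is finite, the non-trivial identification above is precisely Lemma \ref{lem:etale base change for Witt Frobenius} applied to the étale morphism $\Spec k' \to \Spec k$ (equivalently, one may invoke Proposition \ref{prop:etale extension}(1) directly). For a general algebraic separable extension, I would write $k' = \varinjlim_i k_i$ as a filtered colimit of finite separable subextensions, so that both sides of the identification arise as filtered colimits of the finite-extension identifications, using that $W_n$ and $F_*$ commute with filtered colimits of $\F_p$-algebras.

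The main obstacle is precisely this passage to the limit in the infinite case: one must verify the identification $F_*W_n\sO_X \otimes_{W_n(k)} W_n(k') \cong F_*W_n\sO_{X'}$ sheaf-theoretically. Since $\pi$ is affine, I would reduce to open affines $\Spec A \subseteq X$ with corresponding affines $\Spec(A \otimes_k k') \subseteq X'$, check the identification purely algebraically (using Lemma \ref{lem:etale base change for Witt Frobenius} for each finite $k_i$ and taking colimits), and glue using naturality of the construction in affine opens of $X$.
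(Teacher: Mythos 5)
Your forward inequality $\sht(X)\leq\sht(X')$ coincides with the paper's argument: the split inclusion $k\subseteq k'$ gives a splitting of $\sO_X\to\pi_*\sO_{X'}$, and Proposition~\ref{prop:fiber space} does the rest.

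For the reverse inequality, your high-level idea (base-change the splitting $\phi$ along the (pro-)\'etale extension and invoke Lemma~\ref{lem:etale base change for Witt Frobenius}) is the same as the paper's, but your chosen base change introduces a genuine gap. You tensor over $W_n(k)\to W_n(k')$ and declare the key identification to be
\[
F_*W_n\sO_X\otimes_{W_n(k)}W_n(k')\;\cong\;F_*W_n\sO_{X'},
\]
claiming this ``is precisely Lemma~\ref{lem:etale base change for Witt Frobenius} applied to $\Spec k'\to\Spec k$.'' It is not: the Lemma applied to $\Spec k'\to\Spec k$ gives only $F_*W_n(k)\otimes_{W_n(k)}W_n(k')\cong F_*W_n(k')$, a statement purely about the fields. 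To pass from there to your displayed isomorphism (equivalently, to identify $F_*W_n\sO_X\otimes_{F_*W_n(k)}F_*W_n(k')$ with $F_*W_n\sO_{X'}$) you need, on an affine $\Spec A\subseteq X$, the compatibility $W_n(A)\otimes_{W_n(k)}W_n(k')\cong W_n(A\otimes_k k')$, i.e.\ that the Witt functor commutes with this \'etale base change. This is true (one checks both sides are \'etale $W_n(A)$-algebras, e.g.\ via the results of [LZ], reducing to the same thing mod the nilpotent ideal $VW_{n-1}$, and invokes rigidity of \'etale algebras), but it is an additional fact that the Lemma does not supply, and you neither state nor prove it.

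The cleaner route, which the paper takes, is to apply the Lemma directly to the \'etale morphism $X'\to X$ (or, in the infinite case, to each $X\times_k k_i\to X$ for $k'=\varinjlim k_i$): that gives
\[
F_*W_n\sO_X\otimes_{W_n\sO_X}W_n\sO_{X'}\;\cong\;F_*W_n\sO_{X'}
\]
with no detour through $W_n(k)$, and the infinite case is handled by a straightforward filtered colimit since $W_n$, $F_*$, and $\otimes_{W_n\sO_X}$ all commute with filtered colimits. This is exactly what ``the argument of Proposition~\ref{prop:etale extension}'' means in the paper's proof, and it is also what your parenthetical alternative (``invoke Proposition~\ref{prop:etale extension}(1) directly'') correctly gestures at for the finite case. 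So the fix is easy: replace your base change over $W_n(k)\to W_n(k')$ by base change over $W_n\sO_X\to W_n\sO_{X'}$; with that change, your proposal becomes the paper's proof.
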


\begin{proof}
Since $k \subseteq k'$ is a split inclusion, we have $\sht(X') \geq \sht(X)$.
As $k \subseteq k'$ is an inductive limit of \'etale extensions, it follows that $f \colon X' \to X$ is a projective limit of \'etale morphisms.
By Lemma \ref{lem:etale base change for Witt Frobenius}, we obtain
\[
(W_nf)^*F_*W_n\sO_X \cong F_*W_n\sO_{X'}.
\]
Therefore, the argument of Proposition \ref{prop:etale extension} shows that $\sht(X')=\sht(X)$.
\end{proof}

\begin{prop}\label{prop:completion}
Let $(R,\m)$ be an $F$-finite Noetherian local ring of positive characteristic and let $\widehat{R}$ be the $\m$-adic completion of $R$.
Then $\sht(R)=\sht(\widehat{R})$.
\end{prop}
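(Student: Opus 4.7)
The plan is to prove the two inequalities $\sht(\widehat{R}) \le \sht(R)$ and $\sht(R) \le \sht(\widehat{R})$ separately, both by means of flat base change along $R \to \widehat{R}$. The crucial input is Proposition~\ref{prop:completion for witt ring}, which identifies $W_n(\widehat{R})$ with the $\n$-adic completion of $W_n(R)$, together with the fact that $F_* W_n(R)$ is a finitely generated $W_n(R)$-module when $R$ is $F$-finite. Combining these, completion at $\n$ of the finitely generated $W_n(R)$-module $F_* W_n(R)$ yields a natural base-change isomorphism
\[
F_*W_n(R) \otimes_{W_n(R)} W_n(\widehat{R}) \;\cong\; F_*W_n(\widehat{R}),
\]
since the Witt-vector Frobenius commutes with $\n$-adic completion.

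For the direction $\sht(\widehat{R}) \le \sht(R)$, assume $R$ is $n$-quasi-$F$-split via $\phi\colon F_*W_n R \to R$. Applying $-\otimes_{W_n R} W_n(\widehat{R})$ and using the displayed isomorphism above produces a $W_n(\widehat{R})$-linear map $\widehat{\phi}\colon F_*W_n(\widehat{R}) \to \widehat{R}$. Since the commutative diagram in Definition~\ref{defn:quasi-F-split} is preserved by base change along $W_n R \to W_n(\widehat{R})$, this $\widehat{\phi}$ witnesses that $\widehat{R}$ is $n$-quasi-$F$-split.

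For the reverse direction $\sht(R) \le \sht(\widehat{R})$, I invoke Remark~\ref{rmk:evaluation}: it suffices to show that the evaluation map $\mathrm{ev}_R \colon \Hom_{W_n R}(F_*W_n R, R) \to R$ is surjective. Because $F_*W_n R$ is finitely presented over the Noetherian ring $W_n R$ and $R \to \widehat{R}$ is flat, the Hom module commutes with the base change; combining this with the displayed isomorphism and the tensor-hom adjunction for restriction of scalars along $W_n R \to W_n(\widehat{R})$ gives a canonical identification
\[
\Hom_{W_n R}(F_*W_n R, R) \otimes_R \widehat{R} \;\cong\; \Hom_{W_n(\widehat{R})}(F_*W_n(\widehat{R}), \widehat{R}),
\]
under which $\mathrm{ev}_R \otimes \widehat{R}$ corresponds to $\mathrm{ev}_{\widehat{R}}$. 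The latter is surjective by the assumption on $\widehat{R}$, and faithful flatness of $R \to \widehat{R}$ then forces $\mathrm{ev}_R$ itself to be surjective.

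The main obstacle is the base-change isomorphism for $F_* W_n$, i.e.\ that the formation of $F_* W_n(-)$ commutes with $\n$-adic completion on $F$-finite Noetherian local rings; once this is in hand, the two inequalities reduce to standard flat-descent arguments, with Remark~\ref{rmk:evaluation} doing the essential work in the descent direction.
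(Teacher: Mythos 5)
Your argument follows the same route as the paper: Proposition~\ref{prop:completion for witt ring} identifies $W_n(\widehat R)$ as the $\n$-adic completion of $W_n(R)$, finiteness of the Witt Frobenius gives the base-change isomorphism $F_*W_n(R) \otimes_{W_n(R)} W_n(\widehat R) \cong F_*W_n(\widehat R)$, and then Remark~\ref{rmk:evaluation} together with faithful flatness of $W_n(R)\to W_n(\widehat R)$ yields the equality of heights. The paper states both directions at once through the evaluation map rather than separating the easy inequality, but the substance and the key lemmas are identical.
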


\begin{proof}
By \cite[Proposition A.4]{LZ}, $W_n(R)$ is Noetherian.
By Proposition \ref{prop:completion for witt ring}, the map $W_n(R) \to W_n(\widehat{R})$ is the $\n$-adic completion, where $\n$ is the maximal ideal of $W_n(R)$.
Since $F \colon W_n(R)\to F_*W_n(R)$ is finite by \cite[Lemma~2.4]{KTY1}, it follows that 
\[
F_* (W_n(R)) \otimes_{W_n(R)} W_n(\widehat{R})
\]
is the $\n$-adic completion of $F_*W_n(R)$. 
Similarly, we have
\[
R \otimes_{W_n(R)} W_n(\widehat{R}) \cong \widehat{R}.
\]
Using Remark \ref{rmk:evaluation} together with the faithful flatness of $W_n(R) \to W_n(\widehat{R})$, we obtain $\sht(R)=\sht(\widehat{R})$.
\end{proof}

\subsection{Fedder type criterion for quasi-$F$-splitting}

In this subsection, we recall the results in \cite{KTY1}.

\begin{conv}\label{conv:example}
Let $k$ be a perfect field of characteristic $p>0$, 
$S \coloneqq k[x_1,\ldots,x_N]$ the polynomial ring, 
$\m \coloneqq (x_1,\ldots,x_N)$, and $R \coloneqq S_\m$.
We set $\m^{[p]} \coloneqq (x_1^p,\ldots,x_N^p) \subset S$.

Consider the $S$-basis 
\[
\bigl\{\,F_*(x_1^{i_1}\cdots x_N^{i_N}) \,\bigm|\, 0 \le i_1,\ldots,i_N \le p-1 \,\bigr\}
\]
of $F_*S$, and let $u \in \Hom_S(F_*S,S)$ be the $S$-linear map which sends 
$F_*((x_1\cdots x_N)^{p-1})$ to $1$ and all other basis vectors to $0$.
Then $u$ is a generator of $\Hom_S(F_*S,S)$.

We define a map
\[
\Delta_1 \colon S \longrightarrow S
\]
as follows.  
For $a \in S$, write $a=\sum_{j=1}^r a_j M_j$ with $a_j \in k$ and each $M_j$ a monomial of coefficient $1$.
Define
\[
\Delta_1(a)
:= \sum_{\substack{0 \le \alpha_1,\ldots,\alpha_r \le p-1\\ \alpha_1+\cdots+\alpha_r=p}}
\frac{1}{p}\binom{p}{\alpha_1,\ldots,\alpha_r}  (a_1M_1)^{\alpha_1}\cdots (a_rM_r)^{\alpha_r}.
\]

Let $f'_1,\ldots,f'_m \in \m$ be a regular sequence and set $f \coloneqq f'_1\cdots f'_m$.
We define an $S$-linear map
\begin{align*}
\theta \colon F_*S &\longrightarrow S,\\
F_*a &\longmapsto u\bigl(F_*(\Delta_1(f^{p-1})a)\bigr).
\end{align*}
We define a sequence of ideals $\{I_n\}_{n\ge1}$ as follows:
\begin{align*}
I_1 
&\coloneqq (f^{p-1}) + ((f'_1)^p,\ldots,(f'_m)^p),\\
I_{n+1} 
&\coloneqq \theta\Bigl(F_*I_n \cap \Ker(u)\Bigr)
     + I_1.
\end{align*}
\end{conv}

\begin{thm}[{{\cite[Theorem~4.11]{KTY1}}}]\label{thm:Fedder's criterion}
With the notation as in Convention~\ref{conv:example}, we have
\[
\sht\bigl((S/(f'_1,\ldots,f'_m))_\m\bigr)
=\inf\bigl\{n \in \Z_{\ge 1} \bigm| I_n \nsubseteq \m^{[p]} \bigr\}.
\]
\end{thm}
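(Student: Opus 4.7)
The plan is to translate the $n$-quasi-$F$-splitting condition into an explicit non-vanishing condition in the polynomial ring $S$, and then match it with $I_n$. By Remark~\ref{rmk:evaluation} together with Proposition~\ref{prop:completion}, it suffices to check that $R_\m:=(S/(f'_1,\ldots,f'_m))_\m$ admits a $W_nR_\m$-linear map $\phi\colon F_*W_nR_\m\to R_\m$ sending $1$ to $1$ if and only if some element of $I_n$ lies outside $\m^{[p]}$. The bridge is a Witt-vector enhancement of Fedder's duality, identifying $\Hom_{W_nR_\m}(F_*W_nR_\m,R_\m)$ with a subquotient of $\Hom_{W_nS}(F_*W_nS,S)$; the latter is free of rank one over $F_*W_nS$, generated by a natural lift of the level-one generator $u$.

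The base case $n=1$ is the classical Fedder criterion for complete intersections: the colon ideal $(I^{[p]}\!:\!I)/I^{[p]}$ is generated by $f^{p-1}$, so $R_\m$ is $F$-split iff $f^{p-1}\notin\m^{[p]}+((f'_1)^p,\ldots,(f'_m)^p)=\m^{[p]}+I_1$. For the inductive step I would exploit the standard Verschiebung filtration $V^iW_{n-i}S\subset W_nS$ together with the fact that the Witt-vector structure of $f^{p-1}\in W_nS$ at the second coordinate is controlled by $\Delta_1(f^{p-1})$, which is the universal correction appearing when converting $\sum_j [a_jM_j]$ into $[\sum_j a_jM_j]$. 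The map $\theta$ is then the Fedder trace at the next Witt level: it records how an obstruction on top descends one step. Restricting to $F_*I_n\cap\Ker(u)$ enforces compatibility with Verschiebung (the kernel of $u$ corresponds to classes originating from $V F_*W_{n-1}S$), and the added summand $I_1$ captures contributions that do not propagate through the inductive trace.

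The main obstacle I anticipate is establishing the inductive identity
\[
I_{n+1}=\theta\bigl(F_*I_n\cap\Ker(u)\bigr)+I_1,
\]
together with the equivalence: $I_{n+1}\nsubseteq\m^{[p]}$ is exactly the existence of a lift of an $n$-quasi-$F$-splitting to an $(n{+}1)$-quasi-$F$-splitting. This is a careful simultaneous bookkeeping of Frobenius, Verschiebung, and Teichmüller lifts in $W_nS$, together with the combinatorial identity characterizing $\Delta_1$ as the level-two correction in the addition law for Witt vectors. Once this bookkeeping is in place, the stated formula for $\sht(R_\m)$ follows by induction on $n$.
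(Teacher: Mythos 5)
The paper does not prove this statement: Theorem~\ref{thm:Fedder's criterion} is quoted verbatim from \cite[Theorem~4.11]{KTY1}, and no proof appears here, so there is no in-paper argument to compare against. Judged on its own terms, your proposal identifies the correct coarse ingredients (Fedder duality at the Witt level, the Verschiebung filtration on $W_nS$, the role of $\Delta_1$ as the degree-two Witt addition correction, classical Fedder as the base case $n=1$), but it does not amount to a proof: the paragraph explicitly defers the inductive identity and the equivalence between $I_{n}\nsubseteq\m^{[p]}$ and $n$-quasi-$F$-splitting, which is precisely where the entire content of the theorem lies.

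Beyond the deferral, a few specific points are off. First, your appeal to Proposition~\ref{prop:completion} is gratuitous: the criterion is stated for the localization $(S/(f'_1,\ldots,f'_m))_\m$, and nothing in the argument you sketch requires passing to the completion. Second, your gloss that ``the kernel of $u$ corresponds to classes originating from $V F_*W_{n-1}S$'' is not an accurate description of $\Ker(u)\subseteq F_*S$; $u$ is just the distinguished generator of $\Hom_S(F_*S,S)$, and the constraint $F_*I_n\cap\Ker(u)$ in the definition of $I_{n+1}$ is a normalization condition on the top-level coordinate of the lifted splitting, not a characterization of the Verschiebung image. Third, the inductive statement you propose, namely that $I_{n+1}\nsubseteq\m^{[p]}$ is ``the existence of a lift of an $n$-quasi-$F$-splitting to an $(n+1)$-quasi-$F$-splitting,'' is not the right formulation: what one must prove is that $I_n\nsubseteq\m^{[p]}$ is equivalent to $\sht\le n$ outright, and the chain $I_1\subseteq I_2\subseteq\cdots$ then gives the infimum formula; conflating this with a lifting statement about a fixed splitting obscures the quantifiers. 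To turn this into a proof you would need to carry out the Witt-vector bookkeeping you gesture at, which is the content of \cite[Section~4]{KTY1} and is not reproduced in this paper.
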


\begin{thm}[{\cite[Theorem~5.8]{KTY1}}]
\label{CY-fedder}
With the notation of Convention~\ref{conv:example}, we endow \(S\) with a \(\mathbb{Z}_{\ge 0}^m\)-graded structure such that each \(x_i\) is homogeneous, and we assume that \(f'_1, \ldots, f'_m\) are homogeneous and that 
\(\deg(f) = \deg(f'_1) + \cdots + \deg(f'_m) = N.\)
We define $f_n$ for $n\in\Z_{>0}$ by $f_1:=f^{p-1}$ and
\[
f_n:=f^{p-1}\Delta_1(f^{p-1})^{p^{n-2}+\cdots+1}
\]
for $n \geq 2.$
Then
\[
\sht(S/(f'_1,\ldots,f'_m))=\mathrm{inf}\{n \mid \theta^{n-1}(F^{n-1}_* (f^{p-1}k )) \nsubseteq \m^{[p]} \}=\mathrm{inf}\{n \mid f_n \notin \m^{[p^n]} \}
\]
holds.
\end{thm}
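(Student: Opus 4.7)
The plan is to deduce the theorem from the general Fedder-type criterion (Theorem~\ref{thm:Fedder's criterion}), which identifies $\sht(S/(f'_1,\ldots,f'_m))$ with $\inf\{n \mid I_n \not\subseteq \m^{[p]}\}$. Under the Calabi--Yau graded hypothesis I expect the chain $\{I_n\}$ to collapse, so that the criterion is controlled by the single element $T^{n-1}(f_n)$, where $T := u \circ F_* \colon S \to S$ is the standard Cartier-type trace operator.

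First I would establish by induction on $n$ the key identity
\[
\theta^{n-1}\bigl(F^{n-1}_*(f^{p-1})\bigr) \;=\; T^{n-1}(f_n).
\]
The base case is immediate, and the inductive step uses the recurrence $f_{n+1} = f_n \cdot \Delta_1(f^{p-1})^{p^{n-1}}$ together with the projection formula $T^{n-1}(a \cdot b^{p^{n-1}}) = b \cdot T^{n-1}(a)$:
\[
T^n(f_{n+1}) = T\bigl(\Delta_1(f^{p-1}) \cdot T^{n-1}(f_n)\bigr) = \theta\bigl(F_* \theta^{n-1}(F^{n-1}_*(f^{p-1}))\bigr).
\]
For the second equality of the statement, I would then exploit the multi-grading: a direct computation gives $\deg(f_n) = (p^n-1)N$, and since each $\deg(x_j)$ is positive, the only monomial of multi-degree $(p^n-1)N$ with all exponents at most $p^n-1$ is $(x_1\cdots x_N)^{p^n-1}$. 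Hence $f_n \notin \m^{[p^n]}$ iff the coefficient $c_n \in k$ of this monomial in $f_n$ is nonzero, and iterating $T$ yields $T^{n-1}(f_n) \equiv c_n \cdot (x_1\cdots x_N)^{p-1} \pmod{\m^{[p]}}$; in particular, $f_n \notin \m^{[p^n]} \Leftrightarrow T^{n-1}(f_n) \notin \m^{[p]} \Leftrightarrow \theta^{n-1}(F^{n-1}_*(f^{p-1}k)) \not\subseteq \m^{[p]}$.

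It remains to prove the first equality, $\inf\{n : I_n \not\subseteq \m^{[p]}\} = \inf\{n : T^{n-1}(f_n) \notin \m^{[p]}\}$. I would do this by induction on $n$, showing that under the hypothesis $I_k \subseteq \m^{[p]}$ for all $k<n$ (equivalently, $f_k \in \m^{[p^k]}$), one has $I_n \subseteq \m^{[p]} \Leftrightarrow T^{n-1}(f_n) \in \m^{[p]}$. The direction $T^{n-1}(f_n) \notin \m^{[p]} \Rightarrow I_n \not\subseteq \m^{[p]}$ follows from tracking $T^{k-1}(f_k) \in I_k$ through the iteration: the inductive hypothesis $f_k \in \m^{[p^k]}$ implies $T^{k}(f_k) = 0$ by a degree-$0$ computation, so $F_*T^{k-1}(f_k) \in \ker u$ at each step, whence $T^{n-1}(f_n) = \theta(F_*T^{n-2}(f_{n-1})) \in \theta(F_*I_{n-1} \cap \ker u) \subseteq I_n$. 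The converse direction is the main obstacle: assuming $T^{n-1}(f_n) \in \m^{[p]}$, one must show $\theta(F_*y) \in \m^{[p]}$ for every homogeneous $y \in I_{n-1}$ with $F_*y \in \ker u$. The plan is to leverage the multi-grading: such a $y$ of degree $d$ produces $\theta(F_*y)$ of degree $((p-1)^2N + d)/p$, and requiring the output to escape $\m^{[p]}$ forces $d \le (p-1)N$ componentwise. Combined with a recursive unwinding of $I_{n-1}$ and the constraint $T(y) = 0$, the regularity of the sequence $(f'_1,\ldots,f'_m)$ should force all ``spurious'' contributions involving the $f'_i$ to cancel, so that the result collapses modulo $\m^{[p]}$ to a $k$-multiple of $T^{n-1}(f_n)$, completing the argument.
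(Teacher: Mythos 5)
The statement is cited from \cite[Theorem~5.8]{KTY1}, so this paper gives no proof to compare against; I am evaluating the proposal on its own terms.

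Your first three steps are sound. The identity $\theta^{n-1}(F^{n-1}_*(f^{p-1}))=T^{n-1}(f_n)$ is correct (with $T(a):=u(F_*a)$), and the induction via $f_{n+1}=f_n\cdot\Delta_1(f^{p-1})^{p^{n-1}}$ and the projection formula $T^{n-1}(ab^{p^{n-1}})=bT^{n-1}(a)$ works. The degree argument giving the second equality and the observation $T^k(f_k)\in k$, vanishing iff $f_k\in\m^{[p^k]}$, are also correct (these are essentially \cite[Lemmas~5.6--5.7]{KTY1}). The easy inequality $\inf\{n: I_n\nsubseteq\m^{[p]}\}\le\inf\{n: T^{n-1}(f_n)\notin\m^{[p]}\}$, via showing $T^{n-1}(f_n)\in I_n$ by tracking the element through the recursion, is fine.

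The genuine gap is the converse direction, which you acknowledge as ``the main obstacle'' but only sketch. Two concrete problems with your plan. First, the degree bound you extract, ``$d\le(p-1)N$ componentwise'', is too weak and does not by itself reduce to a tractable situation; what you actually need is the sharper observation that $S/\m^{[p]}$ is Artinian Gorenstein with socle $k\cdot\overline{(x_1\cdots x_N)^{p-1}}$, so that $I_n\nsubseteq\m^{[p]}$ if and only if the \emph{single} degree-$(p-1)\mu$ graded piece $(I_n)_{(p-1)\mu}$ escapes $\m^{[p]}$ (here $\mu:=\sum\deg x_i=\deg f$). This pins $d$ down to exactly $(p-1)\mu$ and also turns the condition ``$F_*y\in\Ker u$'' in degree $(p-1)\mu$ into ``$y\in\m^{[p]}$''. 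Second, and more seriously, once you restrict to this degree and unwind the recursion, $(I_1)_{(p-1)\mu}$ contains not only $kf^{p-1}$ but also the subspaces $(f'_j)^p\cdot S_{(p-1)\mu - p\deg f'_j}$, and when $m>1$ these can be nonzero. One then has to show that the elements $\theta^l\bigl(F^l_*((f'_j)^p g)\bigr)=T^l\bigl(\Delta_1(f^{p-1})^{1+p+\cdots+p^{l-1}}(f'_j)^p g\bigr)$ remain inside $\m^{[p]}$ for all $l\le n-1$. Your appeal to ``the regularity of $(f'_1,\ldots,f'_m)$ should force all spurious contributions to cancel'' is not an argument; you have not identified any mechanism for such cancellation, and the $S$-linearity $\theta(F_*(f'_j)^p g)=f'_j\theta(F_*g)$ immediately breaks the $p$-th-power structure, so the trick does not iterate. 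This is precisely the technical heart of the Calabi--Yau Fedder criterion, and the proposal leaves it unproved.
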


\begin{cor}[{\cite[Corollary~4.17]{KTY1}}]\label{cor:Fedder's criterion for quasi-F-splitting}
We use the notation in Convention \ref{conv:example}.
Then the following hold.
\begin{itemize}
    \item[\textup{(1)}] There exists the smallest ideal $I_{\infty}$ satisfying
    \[
    I_{\infty} \supseteq \theta(F_{*}I_{\infty} \cap \Ker(u) )+(I^{[p]} \colon I).
    \]
    \item[\textup{(2)}] $(S/(f'_1,\ldots,f'_m))_\m$ is quasi-$F$-split if and only if $I_{\infty} \nsubseteq \m^{[p]}$.
\end{itemize}
\end{cor}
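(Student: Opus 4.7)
The plan is to realize the desired $I_\infty$ as the ascending union of the ideals $I_n$ defined in Convention~\ref{conv:example}, and then to deduce both statements from Theorem~\ref{thm:Fedder's criterion} together with the Noetherianity of $S$. Consider the monotone operator
\[
T(J) \;:=\; \theta\bigl(F_* J \cap \Ker(u)\bigr) \;+\; I_1
\]
on the lattice of ideals of $S$. Note that $I_{n+1} = T(I_n)$ for all $n \geq 1$ and $T(J) \supseteq I_1$ for every $J$. In particular, $I_2 = T(I_1) \supseteq I_1$, and by induction on $n$ using the monotonicity of $T$, the sequence $I_1 \subseteq I_2 \subseteq \cdots$ is ascending.

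For part (1), I would set $I_\infty := \bigcup_n I_n$. Since $S$ is Noetherian, the chain stabilizes: there exists $N$ with $I_\infty = I_N = I_n$ for all $n \geq N$. Then
\[
T(I_\infty) \;=\; T(I_N) \;=\; I_{N+1} \;=\; I_\infty,
\]
so $I_\infty$ satisfies the required containment (in fact with equality). Here I use the standard identity $I_1 = (I^{[p]} : I)$ for the regular sequence $(f'_1,\ldots,f'_m)$, which matches the formulation of the corollary. For minimality, suppose $J$ is any ideal with $J \supseteq T(J)$. Since $T(J) \supseteq I_1$, we have $J \supseteq I_1$. Inductively, if $J \supseteq I_n$, then monotonicity of $T$ gives $J \supseteq T(J) \supseteq T(I_n) = I_{n+1}$. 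Therefore $J \supseteq \bigcup_n I_n = I_\infty$, proving $I_\infty$ is the smallest such ideal.

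For part (2), Theorem~\ref{thm:Fedder's criterion} states that $(S/(f'_1,\ldots,f'_m))_\m$ is quasi-$F$-split if and only if $I_n \not\subseteq \m^{[p]}$ for some $n \geq 1$. Since $\{I_n\}$ is ascending with union $I_\infty$, this occurs if and only if $I_\infty \not\subseteq \m^{[p]}$: the forward implication follows from $I_n \subseteq I_\infty$, while the reverse uses the stabilization $I_\infty = I_N$ for some $N$, so that $I_\infty \not\subseteq \m^{[p]}$ forces $I_N \not\subseteq \m^{[p]}$.

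The main obstacle is essentially bookkeeping rather than substance: one must verify that the Noetherian stabilization is genuinely compatible with the definition of $T$, but this reduces to the termination of ascending chains of ideals in $S$. Beyond that, the argument is a direct Knaster--Tarski/Kleene fixed-point construction adapted to the monotone operator $T$, and no additional geometric or algebraic input is required beyond what is already encoded in Theorem~\ref{thm:Fedder's criterion}.
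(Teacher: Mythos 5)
Your proposal is correct, and it follows the natural (and, given the setup, essentially unique) route that the cited source takes: realize $I_\infty$ as the union of the ascending chain $I_1 \subseteq I_2 \subseteq \cdots$, invoke Noetherianity of $S$ to stabilize, and translate Theorem~\ref{thm:Fedder's criterion} into the statement about $I_\infty$. The key observations you make are all in order: $T$ is monotone because $\theta$ is $S$-linear and $F_*(-) \cap \Ker(u)$ preserves inclusions, $T(J) \supseteq I_1$ holds by construction so the chain ascends, and the identification $I_1 = (I^{[p]}:I)$ for $I=(f'_1,\ldots,f'_m)$ a regular sequence is the standard Fedder identity $(I^{[p]}:I) = (f^{p-1}) + I^{[p]}$ with $f = f'_1\cdots f'_m$. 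One minor remark: for the reverse implication in part (2) you do not actually need the Noetherian stabilization — if $I_\infty = \bigcup_n I_n \not\subseteq \m^{[p]}$, pick any element witnessing this; it already lies in some $I_n$, so $I_n \not\subseteq \m^{[p]}$ — but your version via $I_\infty = I_N$ is equally valid, and Noetherianity is genuinely needed in part (1) to know the fixed point is reached at a finite stage.
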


\begin{cor}[{\cite[Corollary~4.19]{KTY1}}]\label{cor:non-q-split criterion}
We use the notation in Convention \ref{conv:example}.
Then the following hold.
\begin{itemize}
    \item[\textup{(1)}] If $f^{p-2} \in \m^{[p]}$, then $\sht((S/(f'_1,\ldots,f'_m))_\m)=\infty$.
    \item[\textup{(2)}] If $ (f^{p-2},I^{[p]})f^{p(p-2)}\Delta_{1}(f) \subseteq \m^{[p^2]}$ 
    and $f^{p-1} \in \m^{[p]}$, then $\sht((S/(f'_1,\ldots,f'_m))_\m)=\infty$. 
\end{itemize}
\end{cor}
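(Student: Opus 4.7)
I will derive both statements from Corollary~\ref{cor:Fedder's criterion for quasi-F-splitting}: $\sht(R)=\infty$ is equivalent to $I_\infty\subseteq\m^{[p]}$, and by the minimality of $I_\infty$ it suffices to verify that $\m^{[p]}$ itself is closure-stable, namely
\[
\theta\bigl(F_*\m^{[p]}\cap\Ker(u)\bigr)+(I^{[p]}\!:\!I)\;\subseteq\;\m^{[p]}.
\]

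To handle the $(I^{[p]}\!:\!I)$-summand I will use the regular-sequence identity $(I^{[p]}\!:\!I)=I^{[p]}+(f^{p-1})$. Since each $(f'_i)^p\in\m^{[p]}$ automatically, the verification reduces to showing $f^{p-1}\in\m^{[p]}$. In part~(2) this is the explicit hypothesis; in part~(1) it follows from $f^{p-1}=f\cdot f^{p-2}\in\m\cdot\m^{[p]}\subseteq\m^{[p]}$.

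The $\theta$-summand is the technical heart. For $a\in\m^{[p]}$ I will write $a=\sum_i x_i^p a_i$ and use the pullout formula $u(F_*(b\,x_i^p))=x_i\,u(F_*b)$ to obtain
\[
\theta(F_*a)=\sum_i x_i\,u\bigl(F_*(\Delta_1(f^{p-1})\,a_i)\bigr),
\]
so the problem is reduced to controlling the divisibility of $\Delta_1(f^{p-1})\cdot a_i$. The plan is to extract this control from a Witt-vector identity: raising $\tilde f^p\equiv F(\tilde f)+p\,\widetilde{\Delta_1(f)}\pmod{p^2}$ to the $(p-1)$st power and comparing with $(\widetilde{f^{p-1}})^p\equiv F(\widetilde{f^{p-1}})+p\,\widetilde{\Delta_1(f^{p-1})}\pmod{p^2}$ yields---after reconciling the two lifts $\tilde f^{p-1}$ and $\widetilde{f^{p-1}}$, whose difference is a $p$-multiple of a monomial-collision term---a congruence of the shape $\Delta_1(f^{p-1})\equiv -f^{p(p-2)}\Delta_1(f)$ modulo a controlled power of $\m$. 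Under hypothesis~(1), combining this with $f^{p-2}\in\m^{[p]}$ (so $f^{p(p-2)}\in\m^{[p^2]}$) places $\Delta_1(f^{p-1})$ into $\m^{[p]}$, which together with the $\ker u$-restriction (absorbing the $\m$-multiplier $x_i$ from the pullout into the desired $\m^{[p]}$) gives $\theta(F_*a)\in\m^{[p]}$. Under hypothesis~(2), the role of $\Delta_1(f^{p-1})\in\m^{[p]}$ is replaced by the product-level assumption $(f^{p-2},I^{[p]})\cdot f^{p(p-2)}\Delta_1(f)\subseteq\m^{[p^2]}$, which is tailored to control $\Delta_1(f^{p-1})\cdot a_i$ precisely for the $a_i$'s arising from coefficients of the generators of $I_1=(f^{p-1})+I^{[p]}$---and these are the only $a_i$'s one must consider, by the $\ker u$-restriction plus the $I_1$-remainder that is already in $\m^{[p]}$.

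\textbf{Main obstacle.} The delicate step will be establishing the explicit Witt-vector congruence for $\Delta_1(f^{p-1})$ in terms of $\Delta_1(f)$ and accurately controlling the monomial-collision correction $\tilde f^{p-1}-\widetilde{f^{p-1}}$. In part~(2) the additional subtlety is that $\Delta_1(f^{p-1})$ itself need not lie in $\m^{[p]}$, so one must carefully restrict to the relevant $a_i$'s (those not already killed by $I_1\subseteq\m^{[p]}$) and use the product-form hypothesis. Once these identities are in hand, the remaining steps---iterated pullouts and ideal bookkeeping---are routine.
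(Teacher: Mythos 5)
The paper does not prove this corollary here; it is cited verbatim from \cite[Corollary~4.19]{KTY1}, so there is no in-paper proof to compare against. Evaluating your argument on its own terms, the high-level idea---use Corollary~\ref{cor:Fedder's criterion for quasi-F-splitting} (or Theorem~\ref{thm:Fedder's criterion}), exhibit a closure-stable ideal containing $I_1$ and contained in $\m^{[p]}$, and exploit the Witt-vector congruence $\Delta_1(f^{p-1})\equiv -f^{p(p-2)}\Delta_1(f)$---is sound, and that congruence is indeed the key lemma. But there are two concrete problems. First, the pullout decomposition $a=\sum_i x_i^p a_i$ is counterproductive: the individual $a_i$ do not inherit $a\in\Ker(u)$, and it is precisely $a\in\Ker(u)$ that lets one discard the $S^{(p)}$-error term in $\Delta_1(f^{p-1})\equiv -f^{p(p-2)}\Delta_1(f)+(\text{$p$th power})$. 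After the pullout, $u(F_*(h^p a_i))=h\,u(F_*a_i)$ is uncontrolled, so the $S^{(p)}$-remainder cannot be killed, and your ``absorbing the $\m$-multiplier $x_i$'' remark does not repair this. For part~(1) the correct and shorter route is to skip the pullout entirely: for $a\in\Ker(u)$ one has $\theta(F_*a)=-u\bigl(F_*(f^{p(p-2)}\Delta_1(f)\,a)\bigr)$, and $f^{p-2}\in\m^{[p]}$ forces $f^{p(p-2)}\in\m^{[p^2]}$, so $\theta(F_*a)\in u(F_*\m^{[p^2]})\subseteq\m^{[p]}$ for \emph{every} $a\in\Ker(u)$; combined with $I_1\subseteq\m^{[p]}$ this gives all $I_n\subseteq\m^{[p]}$.

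Second, and more seriously, for part~(2) the assertion that $\m^{[p]}$ is closure-stable is simply not implied by the hypotheses and will generally be false. The hypothesis $(f^{p-2},I^{[p]})\,f^{p(p-2)}\Delta_1(f)\subseteq\m^{[p^2]}$ controls $f^{p(p-2)}\Delta_1(f)\cdot a$ only when $a$ lies in the ideal $(f^{p-2},I^{[p]})$---which contains $I_1$ but is much smaller than $\m^{[p]}$---so the argument above only yields $\theta(F_*I_1\cap\Ker u)\subseteq\m^{[p]}$, i.e.\ $I_2\subseteq\m^{[p]}$, and then stalls: an element $b\in\theta(F_*I_1\cap\Ker u)$ lies in $\m^{[p]}$ but there is no reason for $f^{p(p-2)}\Delta_1(f)\cdot b$ to lie in $\m^{[p^2]}$, so you cannot continue to $I_3$. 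Your suggestion to ``restrict to the relevant $a_i$'s arising from generators of $I_1$'' does not address this, because the recursion $I_{n+1}=\theta(F_*I_n\cap\Ker u)+I_1$ forces you to feed the output of $\theta$ back into $\theta$, not merely elements of $I_1$. A correct proof must identify a genuinely $\theta$-closure-stable ideal strictly smaller than $\m^{[p]}$ (or run a two-invariant induction tracking both $I_n\subseteq\m^{[p]}$ and $f^{p(p-2)}\Delta_1(f)I_n\subseteq\m^{[p^2]}$, and in the latter case one still has to close the loop for elements in $\theta(F_*I_n\cap\Ker u)$); as written this step is a genuine gap.
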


\section{Quasi-$F$-splitting for weighted multigraded rings}

\subsection{Quasi-\texorpdfstring{$F$}{F}-split height for weighted multigraded rings}\label{Appendix:multigrade}
In this subsection, we define section rings and coordinate rings for closed subvarieties of fiber products of weighted projective spaces.  
Furthermore, we compare the quasi-$F$-splitting of a projective variety with that of its section ring, which enables us to apply a Fedder-type criterion for quasi-$F$-splitting to complete intersections in fiber products of weighted projective spaces.

\begin{defn}\label{wdefn:well-formed}
Let $P=\P(a_1:\cdots:a_N)$ be a weighted projective space over a field $k$.
We say that $P$ is \emph{well-formed} if for every $1 \leq i \leq N$, we have
\[
\gcd(a_j \mid j \neq i)=1.
\]
Every weighted projective space is isomorphic to a well-formed weighted projective space, by an argument similar to \cite[Proposition~1.3]{Dol}.
For an integer $n$, we define $\sO_P(n)$ as in \cite[Section~1.4]{Dol}.
\end{defn}

\begin{prop}\label{wprop:first properties}
Let $P:=\P(a_1:\cdots:a_N)$ be a well-formed weighted projective space over a field $k$.
Let $U_i:=D_{+}(x_i)$ and $V_i:=\bigcap_{i \neq j} U_j$.
Let $S:=k[x_1,\ldots,x_N]$ be a polynomial ring graded by $\deg(x_i)=a_i$.
Then, for every $n,m \in \Z$,
\begin{itemize}
    \item[\textup{(1)}] the natural map $S_n \to H^0(P,\sO_P(n))$ is bijective,
    \item[\textup{(2)}] $\sO_P(n)$ is Cohen--Macaulay,
    \item[\textup{(3)}] $\sO_{V_i}(n):=\sO_P(n)|_{V_i}$ is generated by $\prod_{i \neq j} x_j^{l_j}$, where $l_j$ are integers satisfying $\sum a_j l_j=n$,
    \item[\textup{(4)}] the reflexive hull of $\sO_P(n) \otimes \sO_P(m)$ is isomorphic to $\sO_P(n+m)$, and
    \item[\textup{(5)}] if $k$ is of positive characteristic, then the reflexive hull of $F^*\sO_P(n)$ is $\sO_P(pn)$.
\end{itemize}
\end{prop}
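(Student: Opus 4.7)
The plan is to handle (3) first by a direct local computation, then deduce (1) by a \v{C}ech argument, and bootstrap (2), (4), and (5) from the local description together with the normality of $P$ and the fact that $V := \bigcup_i V_i$ has complement of codimension at least $2$.

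For (3), on $V_i$ every $x_j$ with $j \neq i$ is invertible, so $\sO_{V_i}(n)$ is generated by any single monomial $\prod_{j \neq i} x_j^{l_j}$ of weight $n$; any two such generators differ by a product of weight-zero units of the form $(x_j/x_k)^{\pm 1}$. The well-formedness hypothesis $\gcd(a_j \mid j \neq i) = 1$ guarantees that integers $l_j$ with $\sum_{j \neq i} a_j l_j = n$ exist for every $n \in \Z$. For (1), I would identify $\sO_P(n)$ with the sheafification of the graded $S$-module $S(n)$ and run the standard \v{C}ech computation on the cover $\{U_i\}_{i=1}^N$; global sections correspond to degree-$n$ elements in the intersection of the homogeneous localizations $S_{(x_i)}$, which is exactly $S_n$, following the argument of \cite[Section~1.4]{Dol}.

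For (2), it suffices to verify that each stalk $\sO_{P,p}(n)$ is a maximal Cohen--Macaulay module over $\sO_{P,p}$. On the affine chart $U_i$ the space $P$ is locally a cyclic quotient, and $\sO_P(n)|_{U_i}$ corresponds to a fixed isotypic summand (of a prescribed character) for the $\mu_{a_i}$-action on the polynomial ring; being a direct summand of a Cohen--Macaulay module over its ring of invariants, it is itself Cohen--Macaulay. For (4), the product of generators from (3) is a generator of $\sO_V(n+m)$, so $\sO_P(n) \otimes \sO_P(m)$ and $\sO_P(n+m)$ agree on $V$; since both reflexive hulls are determined by their restrictions to $V$ (by the codimension estimate and the $S_2$ property from (2)), they coincide. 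Item (5) is (4) applied $p-1$ times, using $F^*\sO_P(n) \cong \sO_P(n)^{\otimes p}$ on the smooth locus $V$ (where $\sO_P(n)$ is a line bundle by (3)) and the reflexive-hull argument to extend across the singular codimension-$\geq 2$ locus.

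The main obstacle I anticipate is the cleanest formulation of (2): controlling the depth of the rank-one reflexive sheaf $\sO_P(n)$ at the singular points of $P$, where the $\mathbb{G}_m$-stabilizer on $\A^N \setminus \{0\}$ is nontrivial and the well-formedness assumption enters in an essential way. Once (2) and (3) are in hand, (1) is a routine \v{C}ech calculation and (4), (5) are formal consequences of the reflexive-hull principle on the complement of a codimension-$\geq 2$ subset.
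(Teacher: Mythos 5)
Your proposal takes essentially the same route as the paper: (3) via a single monomial generator on $V_i$ using well-formedness, (1) via the standard localization/\v{C}ech computation \`a la Dolgachev, and (4)--(5) by comparing on the big open $V := \bigcup_i V_i$ (whose complement has codimension $\geq 2$) and then passing to reflexive hulls. The one place where you genuinely deviate is (2): the paper notes directly that $\sO_P(n)(D_+(f)) = S[f^{-1}]_n$ is a graded direct summand of the regular (hence Cohen--Macaulay) ring $S[f^{-1}]$, while you instead invoke the chart presentation $U_i \cong \A^{N-1}/\mu_{a_i}$ and identify $\sO_P(n)|_{U_i}$ with a $\mu_{a_i}$-isotypic summand. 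Both reduce to the same direct-summand principle; your version has the small advantage that $k[y_j]_{j\neq i}$ is visibly module-finite over its invariant subring, making the depth comparison immediate, whereas the paper's phrasing avoids having to set up the quotient picture at all. One cosmetic caveat: $V$ is not literally asserted to be the smooth locus of $P$ in the paper, only that it lies where the sheaves $\sO_P(n)$ are invertible and that its complement has codimension $\geq 2$---which is all your argument for (4) and (5) actually uses, so the substance is unaffected.
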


\begin{proof}
Assertion (1) follows from an argument similar to \cite[Theorem~1.4]{Dol}.
Let $f \in S$ be a homogeneous element of positive degree and set $U:=D_+(f)$.
Then
\[
\sO_P(n)(U)=S[f^{-1}]_n,
\]
which is a Cohen--Macaulay $S[f^{-1}]_0$-module since it is a direct summand of $S[f^{-1}]$.
Therefore $\sO_P(n)$ is  Cohen--Macaulay.

We now prove (3).
Without loss of generality, assume $i=N$.
Since $P$ is well-formed, $a_1,\ldots,a_{N-1}$ have no common factor.
Hence there exist integers $l_1,\ldots,l_{N-1}$ such that
\[
a_1l_1+\cdots+a_{N-1}l_{N-1}=n,
\]
so that the degree of $f:=\prod x_i^{l_i}$ is equal to $n$.
Thus $f$ is a section of $\sO_P(n)(V_i)=S[(x_1\cdots x_{N-1})^{-1}]_n$.
Next we show that $f$ is a generator.
Take $g \in \sO_P(n)(V_i)$.
Every monomial appearing in $g$ has degree $n$, so we may assume $g=\prod x_i^{m_i}$.
Since $g\in S[(x_1 \cdots x_{N-1})^{-1}]$, we have $m_N \geq 0$.
Define
\[
h:=(\prod_{i\leq N-1} x_i^{m_i-l_i}) x_N^{m_N}.
\]
As $\deg(g)=n$, the degree of $h$ is
\[
\sum_{i\leq N-1} a_i(l_i-m_i)+a_Nm_N=n-n=0.
\]
Furthermore, since $m_N\geq 0$, we have 
\[
h \in \sO_P(V_i)=S[(x_1\cdots x_{N-1})^{-1}]_0.
\]
By construction, $g=fh$, so $f$ is indeed a generator.

We now prove (4).
There is a natural map
\begin{align}\label{map}
    \sO_P(n)\otimes \sO_P(m) \to \sO_P(n+m).
\end{align}
Thus we obtain a map from the reflexive hull of $\sO_P(n)\otimes \sO_P(m)$ to $\sO_P(n+m)$ by (2).
We claim this map is an isomorphism.
Note that the open subset $V_1\cup \cdots \cup V_N$ contains all codimension-one points.
Indeed, its complement is
\[
\bigcap_{1 \leq i \leq N} \bigcup_{j \neq i} V_+(x_j) \subseteq \bigcup_{1 \leq i < j \leq N} (V_+(x_i)\cap V_+(x_j)),
\]
and since $\codim(V_+(x_i)\cap V_+(x_j))=2$, it follows that $V_1\cup\cdots\cup V_N$ contains all codimension-one points.
As $P$ is normal, it suffices to show that the map \eqref{map} is an isomorphism on each $V_i$, which follows from the description of generators in (3).

Finally, using the natural map $F^*\sO_P(n)\to \sO_P(pn)$, assertion (5) follows from the same argument as above.
\end{proof}

\begin{defn}\label{wdefn:notation for multiprojective space}
Let $P_1,\ldots,P_m$ be well-formed weighted projective spaces over a field $k$ of positive characteristic, and set $P:=P_1 \times \cdots \times P_m$.
We define the sheaves $\sO_P(h_1,\ldots,h_m)$ as the sheaves generated by homogeneous sections of degree $(h_1,\ldots,h_m)$, or equivalently as the reflexive hull of
\[
p_1^*\sO_{P_1}(h_1) \otimes \cdots \otimes p_m^*\sO_{P_m}(h_m),
\]
where $p_i \colon P \to P_i$ is the $i$-th projection.
By Proposition \ref{wprop:first properties}, we have
\[
(\sO_P(n) \otimes \sO_P(n'))^{**} \cong \sO_P(n+n'), \qquad (F^*\sO_P(n))^{**} \cong \sO_P(pn),
\]
where $(-)^{**}$ denotes the reflexive hull and $n,n' \in \Z^{m}$.

Let $j \colon X \hookrightarrow P$ be a closed immersion from a scheme $X$ satisfying condition $(S_2)$.
Assume that the regular locus $P_{\mathrm{reg}}$ of $P$ contains all codimension-one points of $X$.
For $h \in \Z^m$, we define $\sO_X(h)$ as the reflexive hull of $j^*\sO_P(h)$.

We define the \emph{coordinate ring} of $X$ as the image of the natural ring homomorphism
\[
\bigoplus_{h \in \Z^m} H^0(P,\sO_P(h)) \longrightarrow \bigoplus_{h \in \Z^m} H^0(X,\sO_X(h)),
\]
and the \emph{section ring} of $X$ as
\[
\bigoplus_{h \in \Z^m} H^0(X,\sO_X(h)).
\]
Both the section ring and the coordinate ring carry natural multigraded structures.
\end{defn}

\begin{lem}\label{wlem:projectively normality of multigraded projective space}
We use the notation of Definition \ref{wdefn:notation for multiprojective space}.
For each $i$, take a graded polynomial ring $R_i$ with $\Proj R_i \cong P_i$.
Define the multigraded polynomial ring $R$ by
\[
R:=R_1 \otimes \cdots \otimes R_m.
\]
Then the section ring of $P$ coincides with $R$.
\end{lem}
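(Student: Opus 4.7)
The plan is to compute each multi-graded component $H^0(P, \sO_P(h))$, for $h = (h_1, \ldots, h_m) \in \Z^m$, and to identify it with $R_h = (R_1)_{h_1} \otimes_k \cdots \otimes_k (R_m)_{h_m}$; summing over $h$ then yields the isomorphism of multi-graded rings.

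First, I would form an open cover of a large open subset $U \subseteq P$ by charts coming from Proposition~\ref{wprop:first properties}. For each multi-index $J = (j_1, \ldots, j_m)$, set
\[
V_J := V_{j_1}^{(1)} \times \cdots \times V_{j_m}^{(m)},
\]
where $V_j^{(i)} \subseteq P_i$ is the open subset defined before Proposition~\ref{wprop:first properties}. The argument in the proof of Proposition~\ref{wprop:first properties}(4) shows that $\bigcup_j V_j^{(i)}$ contains every codimension-one point of $P_i$, so the complement of $U := \bigcup_J V_J$ has codimension $\geq 2$ in $P$. On each $V_J$, Proposition~\ref{wprop:first properties}(3) applied factor-wise trivializes $p_i^*\sO_{P_i}(h_i)$ as a line bundle, so $\bigotimes_i p_i^*\sO_{P_i}(h_i)|_{V_J}$ is already a line bundle and hence reflexive, coinciding with $\sO_P(h)|_{V_J}$.

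Since $P$ is normal and $\sO_P(h)$ is reflexive by construction, Hartogs extension gives
\[
H^0(P, \sO_P(h)) = H^0(U, \sO_P(h)|_U).
\]
Writing $M_J := \prod_{i=1}^m \prod_{l \neq j_i} x_{i,l}$, the identification on each $V_J$ reads
\[
H^0(V_J, \sO_P(h)|_{V_J}) = \bigl(R[M_J^{-1}]\bigr)_h,
\]
the multi-homogeneous degree-$h$ piece of the localization. Gluing across the cover, $H^0(U, \sO_P(h)|_U)$ is identified with the intersection $\bigcap_J (R[M_J^{-1}])_h$ inside the total localization $R[\prod_J M_J^{-1}]$.

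The core combinatorial step is the identity $\bigcap_J (R[M_J^{-1}])_h = R_h$: an element of $(R[M_J^{-1}])_h$ is a multi-homogeneous Laurent polynomial of degree $h$ with non-negative exponent in every variable $x_{i,j_i}$ (the non-inverted ones) and arbitrary exponent in the inverted variables. Since every variable $x_{i,l}$ plays the role of $x_{i,j_i}$ for the multi-index $J$ having $j_i = l$, intersecting over all $J$ forces non-negative exponents in all variables, producing exactly $R_h = \bigotimes_i (R_i)_{h_i}$. The step I expect to require the most care is the codimension-two estimate for $P \setminus U$, which is a product-type refinement of the single-factor analysis in the proof of Proposition~\ref{wprop:first properties}(4); once that is in hand, the rest is a direct monomial computation.
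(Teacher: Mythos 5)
Your proof is correct, and it takes a genuinely different (though related) route from the paper's.

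The paper proves the lemma by (i) invoking the K\"unneth formula on $P_{\mathrm{reg}}$ to decompose $\bigoplus_h H^0(P, \sO_P(h))$ as $\bigotimes_i \bigoplus_{h_i} H^0(P_i, \sO_{P_i}(h_i))$, and then (ii) quoting the single-factor statement (essentially Proposition~\ref{wprop:first properties}(1)) that each $\varphi_i\colon R_i \to \bigoplus_{h_i} H^0(P_i, \sO_{P_i}(h_i))$ is an isomorphism, so $\varphi = \varphi_1 \otimes \cdots \otimes \varphi_m$ is one too. The shared ingredient in both proofs is the reflexivity of $\sO_P(h)$ together with normality of $P$, which lets one work on a codimension-$\geq 2$ complement. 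You skip K\"unneth entirely and instead redo the computation directly in the product: the affine charts $V_J = \prod_i V_{j_i}^{(i)}$ cover a big open $U = \prod_i U_i$ with $\codim(P\setminus U) \geq 2$ (since $P\setminus U = \bigcup_i (\cdots \times (P_i\setminus U_i) \times \cdots)$ and each factor has codimension $\geq 2$), on each $V_J$ the sheaf $\sO_P(h)$ is already the trivial line bundle with sections $(R[M_J^{-1}])_h$, and the sheaf condition over the cover amounts to $\bigcap_J (R[M_J^{-1}])_h = R_h$, which holds because every variable appears as the uninverted one in some $J$. Your approach is more explicit and self-contained; the paper's is more modular, reducing to the one-factor case already established. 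Both are valid, and your combinatorial localization argument is in fact also what underlies the proof of Proposition~\ref{wprop:first properties}(1), so the two arguments are morally close even though the organization differs.
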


\begin{proof}
There exists a natural multigraded homomorphism
\[
\varphi \colon R \to \bigoplus_{h \in \Z^m} H^0(P,\sO_P(h)).
\]
Note that every sheaf $\sO_P(h)$ satisfies $(S_2)$ and $P$ is normal.
By K\"unneth's formula on the regular locus of $P$, we obtain
\[
\bigoplus_{h \in \Z^m} H^0(P,\sO_P(h)) \cong \bigoplus_{(h_1,\ldots,h_m) \in \Z^m} H^0(P_1,\sO_{P_1}(h_1)) \otimes \cdots \otimes H^0(P_m,\sO_{P_m}(h_m)).
\]
A standard argument shows that the natural homomorphism
\[
\varphi_i \colon R_i \to \bigoplus_{h_i \in \Z} H^0(P_i,\sO_{P_i}(h_i))
\]
is an isomorphism for each $i$.
Since $\varphi = \varphi_1 \otimes \cdots \otimes \varphi_m$, the claim follows.
\end{proof}

\begin{lem}\label{lem:multigraded structure on Witt ring}
Let $S$ be a $\mathbb{Z}_{\geq 0}^m$-graded ring of positive characteristic.
Then $W_n(S)$ admits a $\tfrac{1}{p^{\,n-1}}\mathbb{Z}_{\geq 0}^m$-graded structure defined by
\[
W_n(S)_{h/p^{\,n-1}} := (S_{h/p^{\,n-1}},\ldots,S_h),
\]
for $h \in \mathbb{Z}_{\geq 0}^m$, where we set $S_{h/p^k}=0$ whenever $h/p^k \notin \mathbb{Z}_{\geq 0}^m$.
\end{lem}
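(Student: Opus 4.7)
The plan is to verify the three axioms of a graded ring for $W_n(S)$ with the prescribed pieces: closure of each $W_n(S)_{h/p^{n-1}}$ under (i) Witt addition, (ii) Witt multiplication with the correct degree matching, and (iii) that together the pieces give a direct-sum decomposition of $W_n(S)$, with $1$ lying in degree $0$.

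For (i) and (ii), the key input is the classical isobaricness of the universal Witt polynomials: assigning each variable $X_i$ and $Y_i$ the \emph{Witt weight} $p^i$, the addition polynomial $S_k$ is isobaric of total weight $p^k$ in the combined variables, while the multiplication polynomial $P_k$ is bi-isobaric of bi-weight $(p^k,p^k)$ in the $X$- and $Y$-variables separately. Our convention places the $i$-th component of an element of $W_n(S)_{h/p^{n-1}}$ in $S_{h/p^{n-1-i}}$, i.e.\ of $S$-degree $p^i\cdot(h/p^{n-1})$, so the Witt weighting $p^i$ matches the $S$-grading with "scale" $h/p^{n-1}$. A monomial-by-monomial substitution then shows $(a+b)_k=S_k(\ldots)\in S_{h/p^{n-1-k}}$ whenever $a,b\in W_n(S)_{h/p^{n-1}}$, and $(ab)_k=P_k(\ldots)\in S_{(h+h')/p^{n-1-k}}$ for the product, which are exactly the required membership conditions. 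The identity $1=(1,0,\ldots,0)$ lies in $W_n(S)_0$ since $1\in S_0$.

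For (iii), I would argue by induction on $n$. The base case $n=1$ is just the given $S$-grading. For the inductive step, I would exploit the short exact sequence of abelian groups
\[
0 \to S \xrightarrow{V^{n-1}} W_n(S) \xrightarrow{R} W_{n-1}(S) \to 0,
\]
where $V^{n-1}(s)=(0,\ldots,0,s)$ is the iterated Verschiebung and $R$ is the restriction. Both maps are additive and respect the gradings: $R$ sends $W_n(S)_d$ into the corresponding graded piece of $W_{n-1}(S)$ (interpreted as zero when $d\notin \frac{1}{p^{n-2}}\mathbb{Z}_{\geq 0}^m$), and $V^{n-1}$ sends $S_e$ into $W_n(S)_{e/p^{n-1}}$. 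Given $a\in W_n(S)$, apply the inductive decomposition to $R(a)$, lift each homogeneous piece back to $W_n(S)_d$ by appending a $0$ in the last slot, subtract from $a$ to land in $\ker R=V^{n-1}(S)$, and decompose the resulting element of $S$ using its grading. Uniqueness of the decomposition follows by the same exact sequence: applying $R$ reduces to uniqueness in $W_{n-1}(S)$ (by induction), and the residual in $\ker R$ is determined uniquely by the $S$-grading via injectivity of $V^{n-1}$.

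The only slightly technical point is the weight-matching computation in (i) and (ii); once that is settled, the direct-sum decomposition in (iii) is a clean induction using the above Witt short exact sequence. No deep input is required beyond the standard isobaricness of the universal Witt polynomials and the additivity of Verschiebung.
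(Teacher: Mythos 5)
Your argument is correct, but it takes a genuinely different route from the paper: the paper disposes of this lemma in one line by citing \cite[Proposition~7.1]{KTTWYY2}, whereas you give a self-contained proof from the classical isobaricness of the universal Witt addition and multiplication polynomials together with an induction on $n$ using the Verschiebung--restriction short exact sequence
\[
0 \longrightarrow S \xrightarrow{\;V^{\,n-1}\;} W_n(S) \xrightarrow{\;R\;} W_{n-1}(S) \longrightarrow 0.
\]
The weight-matching step is exactly right: with $X_i,Y_i$ of Witt weight $p^i$, the addition polynomial is isobaric of total weight $p^k$ and the multiplication polynomial is bi-isobaric of bi-weight $(p^k,p^k)$, and since the $i$-th slot of an element of $W_n(S)_{h/p^{n-1}}$ has $S$-degree $p^i\cdot(h/p^{n-1})$, every monomial lands in $S_{p^k\cdot(h/p^{n-1})}$ after substitution, giving closure of each homogeneous piece under Witt addition and the expected shift under multiplication. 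The induction step is also sound: $R$ preserves degrees (with $W_n(S)_d$ mapping to zero precisely when $d\notin \frac{1}{p^{n-2}}\mathbb{Z}_{\geq 0}^m$, in which case $W_n(S)_d=V^{n-1}(S_{p^{n-1}d})$), and the additive injectivity of $V^{n-1}$ together with the $S$-grading handles the kernel. Your direct proof has the advantage of making the statement self-contained and of identifying exactly which property of the Witt polynomials is being used; the paper's citation has the advantage of brevity and of delegating to a reference where the graded structure on $W_n$ is treated more generally.

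One small point worth making explicit if you write this up: when you decompose the residual $a-\sum_{d'}\widetilde{a}_{d'}\in\ker R=V^{n-1}(S)$, the degrees $e/p^{n-1}$ arising from the $S$-grading may coincide with degrees $d'$ already used for the lifts; these should be combined into a single homogeneous term, and directness of the full decomposition is then verified by the argument you sketch (apply $R$, use the inductive directness, then use injectivity of $V^{n-1}$ and directness of the $S$-grading on the kernel terms). This is implicit in what you wrote but deserves a sentence.
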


\begin{proof}
This follows from \cite[Proposition~7.1]{KTTWYY2}.
\end{proof}

\begin{defn}
We use the notation of Definition \ref{wdefn:notation for multiprojective space}.
Let $h \in \tfrac{1}{p^{\,n-1}}\mathbb{Z}^m$.
We define a $W_n\sO_P$-module $W_n\sO_P(h)$ as the sheaf generated by homogeneous sections of degree $h$. 
If $h \in \Z^m$, then $W_n\sO_P(h)$ is the Teichm\"uller lift of $\sO_P(h)$ in the sense of \cite[Definition~3.10]{tanaka22}.
For a closed immersion $W_nX \to W_nP$, we define $W_n\sO_X(h)$ as the reflexive hull of the pullback of $W_n\sO_P(h)$.
\end{defn}

\begin{lem}\label{wlem:first properties witt ver}
We use the notation of Definition \ref{wdefn:notation for multiprojective space}.
Let $U$ be the regular locus of $P$ and $U_X:=U \cap X$.
Then, for all $h \in \Z^m$,
\begin{itemize}
    \item[\textup{(1)}] $(F^*\sO_{U_X}(h)) \cong \sO_{U_X}(ph)$ 
    \item[\textup{(2)}] $\sO_{U_X} \otimes_{W_n\sO_{U_X}} W_n\sO_{U_X}(h) \cong \sO_{U_X}(h)$, and
    \item[\textup{(3)}] $F^*W_n\sO_{U_X}(h) \cong W_n\sO_{U_X}(ph)$.
\end{itemize}
\end{lem}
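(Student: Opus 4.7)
The plan is to reduce all three assertions to explicit local computations on the regular locus $U \subseteq P$, where the reflexive sheaves $\sO_P(h)$ and $W_n\sO_P(h)$ are honest line bundles over $\sO_P$ and $W_n\sO_P$, respectively. Indeed, by Proposition~\ref{wprop:first properties} each $\sO_{P_i}(h_i)$ is invertible on the regular locus of $P_i$, and the external tensor product $\bigotimes_i p_i^{*}\sO_{P_i}(h_i)$ is therefore already reflexive on $U$, so it coincides with $\sO_P(h)|_U$. Pulling back along the closed immersion $j \colon X \hookrightarrow P$ then yields a line bundle $\sO_{U_X}(h)$ on $U_X$, since $U_X$ contains all codimension-one points of $X$ and $X$ satisfies $(S_2)$.

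For~(1), I exploit that the absolute Frobenius on the product scheme factors as $F_P = F_{P_1} \times \cdots \times F_{P_m}$, so $F_P^{*} p_i^{*} = p_i^{*} F_{P_i}^{*}$. Proposition~\ref{wprop:first properties}(5) applied to each factor gives $F_{P_i}^{*}\sO_{P_i}(h_i) \cong \sO_{P_i}(ph_i)$ on the regular locus of $P_i$; tensoring these isomorphisms yields $F_P^{*}\sO_P(h) \cong \sO_P(ph)$ on $U$. Pulling back along $j$ and using that the absolute Frobenius commutes with closed immersions produces the asserted isomorphism on $U_X$.

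For~(2) and~(3), I fix an affine open $V \subseteq U$ together with a homogeneous section $s$ of degree $h$ generating $\sO_P(h)|_V$. The Teichm\"uller lift $[s] = (s, 0, \ldots, 0)$ is then a degree-$h$ local section of $W_n\sO_P$ over $V$ with respect to the grading of Lemma~\ref{lem:multigraded structure on Witt ring}, and I claim it generates $W_n\sO_P(h)|_V$ as a $W_n\sO_P|_V$-module. Any local section has components of the form $u_i s^{p^i}$ with $u_i \in \sO_V$ --- here I use~(1) iteratively to see that $s^{p^i}$ generates $\sO_P(p^i h)|_V$ --- and the Witt-vector multiplication identity
\[
(u_0, \ldots, u_{n-1}) \cdot [s] = (u_0 s,\, u_1 s^{p}, \ldots,\, u_{n-1} s^{p^{n-1}})
\]
exhibits this section as a $W_n\sO_V$-multiple of $[s]$. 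Granted this local freeness, (2) is immediate because $R^{n-1}([s]) = s$, so tensoring $W_n\sO_P(h)|_V$ down to $\sO_P|_V$ recovers $\sO_P|_V \cdot s = \sO_P(h)|_V$; and~(3) follows from $F([s]) = [s^p]$ together with the fact that $s^p$ generates $\sO_P(ph)|_V$ by~(1). Restriction along $W_n j$ transports all three isomorphisms to $U_X$.

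The principal obstacle is verifying the generation claim for $W_n\sO_P(h)|_V$ by $[s]$, which bridges the grading-theoretic definition of $W_n\sO_P(h)$ with the Teichm\"uller-lift description of~\cite[Definition~3.10]{tanaka22}; once this is in place, (2) and (3) follow formally from the explicit Witt-vector identities $R^{n-1}([s]) = s$ and $F([s]) = [s^p]$, and the product structure used in (1) merely amounts to handling each factor $P_i$ separately.
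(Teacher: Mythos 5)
Your proof is correct and takes essentially the same approach as the paper: reduce to the regular locus where the reflexive sheaves are honest line bundles, show that the Teichm\"uller lift $[s]$ of a local generator $s$ of $\sO_P(h)$ generates $W_n\sO_P(h)$ locally, and deduce (2) and (3) formally from $R^{n-1}[s]=s$ and $F[s]=[s^p]$. The only cosmetic difference is that the paper expresses the generation step via the identity $V^r[s^{p^r}b]=[s]\,V^r[b]$ applied to the generating set $\{V^r[a]\}$, while you invoke the equivalent componentwise identity $(u_0,\ldots,u_{n-1})\cdot[s]=(u_0 s,\,u_1 s^p,\ldots)$; likewise, for (1) the paper directly notes that $s^p$ generates $\sO_P(ph)$ rather than factoring Frobenius through the product structure.
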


\begin{proof}
As in the proof of Proposition \ref{wprop:first properties}, we take an open affine covering $\{V_i\}$ of $U$ and a generator $f_i$ of $\sO_P(h)(V_i)$.
Then $\sO_X(h)(V_i \cap X)$ is also generated by $f_i$.
By the proof of Proposition \ref{wprop:first properties}, $f_i^p$ is a generator of $\sO_P(ph)(V_i)$, and thus $f_i^p$ is also a generator of $\sO_X(ph)(V_i)$.
Therefore, we obtain the assertion (1).

Next, we prove the assertion (2) and (3).
It is enough to show that the Teichm\"uller lift $[f_i]$ of $f_i$ is a generator of $W_n\sO_P(h)(V_i)$.
We know that $W_n\sO_P(h)(V_i)$ is generated by
\[
\{V^r[a] \mid a \in \sO_P(p^rh)(V_i) \}
\]
by the graded structure of the ring of Witt vectors.
We take an element $a$ in $\sO_P(p^rh)(V_i)$.
Since $f_i^{p^r}$ is a generator of $\sO_P(p^rh)(V_i)$, there exists $b \in \sO_P(V_i)$ such that $f_i^{p^r}b=a$.
Therefore, we have
\[
V^r[a]=V^r[f^{p^r}b]=[f_i]V^r[b].
\]
Since $V^r[b]$ is a section of $W_n\sO_P(V_i)$, we obtain the desired assertion.
\end{proof}

\begin{lem}\label{wlem:natural map}
We use the notation of Definition \ref{wdefn:notation for multiprojective space}.
Let $R$ be the section ring of $P$ and $S$ the coordinate ring of $X$.
There exists a natural multigraded ring homomorphism
\[
W_n(R) \longrightarrow \bigoplus_{h \in \tfrac{1}{p^{n-1}}\mathbb{Z}_{\geq 0}^m} H^0(P, W_n\sO_P(h)).
\]
Moreover, this map induces the multigraded  ring homomorphism
\[
W_n(S) \longrightarrow \bigoplus_{h \in \tfrac{1}{p^{n-1}}\mathbb{Z}_{\geq 0}^m} H^0(X, W_n\sO_X(h)).
\]
\end{lem}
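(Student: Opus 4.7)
The plan is to construct the first ring homomorphism graded-piece by graded-piece using the Teichm\"uller--Verschiebung decomposition of Witt vectors, and then to obtain the second map by observing that the kernel of $W_n(R)\twoheadrightarrow W_n(S)$ is annihilated in the target.

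Fix $h \in \tfrac{1}{p^{n-1}}\Z_{\geq 0}^m$. By Lemma \ref{lem:multigraded structure on Witt ring}, an element of $W_n(R)_h$ is a Witt vector $(a_0,a_1,\ldots,a_{n-1})$ with $a_i \in R_{hp^i}$, where $R_d := 0$ when $d\notin\Z_{\geq 0}^m$. By Lemma \ref{wlem:projectively normality of multigraded projective space} together with Proposition \ref{wprop:first properties}(1), whenever $hp^i\in\Z_{\geq 0}^m$ we have $R_{hp^i} \cong H^0(P,\sO_P(hp^i))$, so each $a_i$ defines a global section of $\sO_P(hp^i)$. Since $W_n\sO_P(hp^i)$ is by definition the Teichm\"uller lift of $\sO_P(hp^i)$, the Teichm\"uller lift $[a_i]\in H^0(P,W_n\sO_P(hp^i))$ is well-defined. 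Using the standard Witt vector identity
\[
(a_0,\ldots,a_{n-1}) = [a_0]+V[a_1]+V^2[a_2]+\cdots+V^{n-1}[a_{n-1}],
\]
I would define the map on $W_n(R)_h$ by
\[
(a_0,\ldots,a_{n-1})\longmapsto \sum_{i=0}^{n-1} V^i[a_i],
\]
interpreted in $H^0(P,W_n\sO_P(h))$. Since the Verschiebung on $W_n\sO_P$ sends degree $d$ to degree $d/p$, each summand $V^i[a_i]$ lies in $H^0(P,W_n\sO_P(hp^i/p^i))=H^0(P,W_n\sO_P(h))$.

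To verify that this defines a multigraded ring homomorphism, I would check compatibility with Witt vector addition and multiplication locally on the standard charts $V_j$ of $P$. By Lemma \ref{wlem:first properties witt ver} applied with $X=P$, the sheaf $W_n\sO_P(hp^i)|_{V_j}$ is freely generated as a $W_n\sO_P|_{V_j}$-module by the Teichm\"uller lift of a local generator $f_j^{p^i}$ of $\sO_P(hp^i)|_{V_j}$. Writing each $V^i[a_i]|_{V_j}$ as a Witt vector times an appropriate Teichm\"uller generator, the verification reduces to the universal Witt-vector polynomials together with the identities $[f][g]=[fg]$ and $[f]\cdot V^i[g]=V^i([f^{p^i}g])$. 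Summing over all $h$ yields the first map.

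For the second map, set $I:=\ker(R\twoheadrightarrow S)$, so that $\ker(W_n(R)\twoheadrightarrow W_n(S)) = W_n(I)$. For any $(a_0,\ldots,a_{n-1})\in W_n(I)$, each $a_i$ restricts to zero in $H^0(X,\sO_X(hp^i))$, hence $[a_i]$ restricts to zero in $H^0(X,W_n\sO_X(hp^i))$, and therefore $V^i[a_i]$ restricts to zero in $H^0(X,W_n\sO_X(h))$. Composing the first map with the natural restriction-plus-reflexivization map
\[
\bigoplus_h H^0(P,W_n\sO_P(h))\longrightarrow\bigoplus_h H^0(X,W_n\sO_X(h))
\]
thus factors through $W_n(S)$, producing the second map. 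The main obstacle is the ring-homomorphism check: one must carefully handle the case $hp^i\notin\Z^m$ (which forces $a_i=0$) and confirm that the Verschiebung--Teichm\"uller operations interact coherently with the multigrading and the local trivializations of the line bundles $\sO_P(hp^i)$ when gluing across charts.
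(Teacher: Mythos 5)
Your proposal is correct and follows essentially the same route as the paper's proof. Both arguments hinge on the same two ideas: (i) for a homogeneous element $a\in R_h$, the section $V^r[a]$ lives in $H^0(P,W_n\sO_P(h/p^r))$, and since elements of this form generate $W_n(R)$ multiplicatively (you spell this out via the decomposition $(a_0,\ldots,a_{n-1})=\sum V^i[a_i]$), one obtains the first multigraded ring homomorphism; and (ii) for the second map one checks that the kernel $W_n(I)=\ker(W_n(R)\to W_n(S))$ dies after restriction to $X$, because each $V^r[a]$ with $a\in I$ restricts to zero in $H^0(X,W_n\sO_X(h/p^r))$. Your extra care — explicitly verifying the ring-homomorphism identities chart by chart using $[f][g]=[fg]$ and $[f]\,V^i[g]=V^i([f^{p^i}g])$, and noting the vanishing forced when $hp^i\notin\Z_{\ge0}^m$ — fills in details the paper leaves implicit in the phrase ``since $W_n(R)$ is generated by such elements $V^r[a]$, we obtain a natural ring homomorphism,'' but it is the same argument, not a different one.
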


\begin{proof}
Let $a \in R$ be a homogeneous element of degree $h \in \mathbb{Z}_{\geq 0}^m$.
Then $a$ is a section of $H^0(P,\sO_P(h))$, hence a homogeneous global section of degree $h$.
Consequently, $V^{r}[a]$ is a homogeneous global section of degree $h/p^r$, and we may regard it as an element of 
\[
H^0(P,W_n\sO_P(h/p^r)).
\]
Since $W_n(R)$ is generated by such elements $V^r[a]$, we obtain a natural ring homomorphism
\[
W_n(R) \to \bigoplus_{h \in \tfrac{1}{p^{\,n-1}}\mathbb{Z}_{\geq 0}^m} H^0(P,W_n\sO_P(h)).
\]
As $V^r[a]$ is homogeneous of degree $h/p^r$ in $W_n(R)$, this map is multigraded.

For the second assertion, note that there is a natural surjection $W_n(R) \twoheadrightarrow W_n(S)$, together with a natural map
\[
\bigoplus_{h \in \tfrac{1}{p^{n-1}}\mathbb{Z}_{\geq 0}^m} H^0(P, W_n\sO_P(h))
\longrightarrow 
\bigoplus_{h \in \tfrac{1}{p^{n-1}}\mathbb{Z}_{\geq 0}^m} H^0(X, W_n\sO_X(h)).
\]
Take a homogeneous $a \in R$ of degree $h$ lying in the kernel of $R \to S$.
Then $a$ is in the kernel of
\[
H^0(P,\sO_P(h)) \to H^0(X,\sO_X(h)).
\]
By the same argument as above, $V^r[a]$ can be regarded as a section of $H^0(P,\sO_P(h/p^r))$, and it lies in the kernel of
\[
H^0(P,\sO_P(h/p^r)) \to H^0(X,\sO_X(h/p^r)).
\]
Thus the desired induced map is obtained.
\end{proof}

\begin{prop}\label{wprop:multigraded algebra and proj}
We follow the notation of Definition \ref{wdefn:notation for multiprojective space}.
Let $S$ be the coordinate ring of $X$ and let $n$ be a positive integer.
Consider the following conditions:
\begin{itemize}
    \item[\textup{(1)}] $\sht(S) \leq n$,
    \item[\textup{(2)}] there exists a homomorphism $\theta \colon F_*W_n(S) \to S$ of multigraded $W_n(S)$-modules fitting into the commutative diagram
    \[
    \xymatrix{
    W_n(S) \ar[r]^F \ar[d]_{R^{n-1}} & F_*W_n(S) \ar[ld]^\theta \\
    S, &
    }
    \]
    \item[\textup{(3)}] $\sht(X) \leq n$.
\end{itemize}
Then $(1) \Rightarrow (2) \Rightarrow (3)$.
In particular, $\sht(X) \leq \sht(S)$.
Moreover, if $S$ coincides with the section ring of $X$, then $(3) \Rightarrow (1)$, and hence $\sht(S)=\sht(X)$.
\end{prop}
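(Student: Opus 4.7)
The plan is to establish $(1)\Rightarrow(2)\Rightarrow(3)$ and, under the hypothesis that $S$ equals the section ring, the converse $(3)\Rightarrow(1)$, by systematically exploiting the $\tfrac{1}{p^{n-1}}\Z^m$-grading on $W_n(S)$ supplied by Lemma~\ref{lem:multigraded structure on Witt ring}, together with the Witt twists $W_n\sO_X(h)$ and the identifications of Lemma~\ref{wlem:first properties witt ver}. The key observation throughout is that the Frobenius $F\colon W_n(S)\to W_n(S)$ multiplies degrees by $p$, so $F\colon W_n(S)\to F_*W_n(S)$ becomes degree-preserving when one sets the $F_*$-twisted grading $(F_*W_n(S))_d := W_n(S)_{pd}$, and that $R^{n-1}\colon W_n(S)\to S$ is also degree-preserving.

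For $(1)\Rightarrow(2)$ I would start with any splitting $\phi\colon F_*W_n(S)\to S$ and define $\theta(\xi)$ to be the degree-$d$ graded component of $\phi(\xi)\in S$ for homogeneous $\xi\in F_*W_n(S)$ of degree $d$, extending by linearity. A direct multigraded check shows $\theta$ is a $W_n(S)$-linear map of degree $0$; since both $F$ and $R^{n-1}$ are degree-preserving, decomposing the relation $\phi\circ F=R^{n-1}$ into homogeneous pieces forces $\theta\circ F=R^{n-1}$.

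For $(2)\Rightarrow(3)$ I would sheafify. Because $\theta$ is multigraded of degree $0$, it localizes: for each homogeneous $f\in S$ it induces a degree-$0$ map $(F_*W_n(S))_{(f)}\to S_{(f)}$, corresponding to a morphism $F_*W_n\sO_X|_{D_+(f)\cap X}\to\sO_X|_{D_+(f)\cap X}$. Covering the intersection of $X$ with $P_{\mathrm{reg}}$ by such opens (coming from the distinguished opens $V_{i_1}\times\cdots\times V_{i_m}$), patching, and extending across the complement using Proposition~\ref{prop:small birational case} together with the $(S_2)$-hypothesis and the assumption that $P_{\mathrm{reg}}$ contains all codimension-one points of $X$, we obtain a global $W_n\sO_X$-splitting of $R^{n-1}\circ F$, so that $\sht(X)\leq n$.

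For the converse $(3)\Rightarrow(1)$, starting from a sheaf splitting $\psi\colon F_*W_n\sO_X\to\sO_X$, for each $h\in\Z^m$ I would tensor $\psi$ with $W_n\sO_X(h)$ over $W_n\sO_X$; the projection formula together with Lemma~\ref{wlem:first properties witt ver}(2),(3) on the regular locus $U_X$, extended across the codimension-$\geq 2$ complement using reflexivity and $(S_2)$, produces compatible sheaf maps $\psi_h\colon F_*W_n\sO_X(ph)\to\sO_X(h)$ whose compositions with the relevant $F$ and $R^{n-1}$ agree graded-piece by graded-piece. Taking global sections, identifying $S_h=H^0(X,\sO_X(h))$ via the section-ring hypothesis, and precomposing with the natural multigraded ring map of Lemma~\ref{wlem:natural map}, I assemble the desired multigraded $\theta\colon F_*W_n(S)\to S$ with $\theta\circ F=R^{n-1}$. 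The main obstacle is precisely this last implication: the section-ring hypothesis is essential for the identification $S_h\cong H^0(X,\sO_X(h))$ so that the locally defined pieces $\psi_h$ target $S$ rather than some strictly larger ring, and the Witt-twist identifications of Lemma~\ref{wlem:first properties witt ver}, which are only stated on $U_X$, must be carefully extended to all of $X$ to produce sheaf maps defined globally before passing to global sections.
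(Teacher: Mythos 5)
Your proposal follows the paper's route essentially step for step: extract the degree-zero component of $\phi$ for $(1)\Rightarrow(2)$, localize and glue for $(2)\Rightarrow(3)$, and for $(3)\Rightarrow(1)$ twist $\phi_X$ by $W_n\sO_X(h)$ on the regular locus, take global sections, and precompose with the ring map of Lemma~\ref{wlem:natural map}. Two small points are worth flagging. First, in $(2)\Rightarrow(3)$ the opens $V_{i_1}\times\cdots\times V_{i_m}$ do \emph{not} cover $P_{\mathrm{reg}}$ (already for $\P^2$ the $V_i$'s miss the coordinate points); they only contain all codimension-one points, so the extension via Proposition~\ref{prop:small birational case} still goes through, but the stated justification is off. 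The paper instead covers \emph{all} of $X$ directly by the distinguished opens $D_+(a)$ for homogeneous $a$ of positive degree, with the explicit formula $\phi_a(V^{r-1}[b/a])=\tfrac{1}{a}\phi(V^{r-1}[a^{p^r-1}b])$, so no codimension argument is needed at this stage. Second, in $(3)\Rightarrow(1)$ you assert that the assembled map $\theta=\psi\circ F_*\varphi$ is the desired splitting, but the $W_n(S)$-linearity of this composite is not automatic: $\varphi$ is merely a ring homomorphism commuting with Frobenius, not a module map, and one has to unwind the Frobenius-twisted module structure carefully. The paper carries this out with a four-step equality (using that $\varphi$ is a Frobenius-compatible ring map, that $\psi$ is a module map over the target, and the identification of the graded $\sO_X$-sections ring with $S$); your sketch should at least signal this verification, since it is exactly where the section-ring hypothesis does its work.
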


\begin{proof}
We first prove $(1)\Rightarrow(2)$.
Suppose there exists $\phi \colon F_*W_n(S) \to S$ as in Definition \ref{defn:quasi-F-split}.
Since $S$ is $F$-finite, \cite[Lemma~2.4]{KTY1} implies that $F_*W_n(S)$ is a finite multigraded $W_n(S)$-module.
Thus $\Hom_{W_n(S)}(F_*W_n(S),S)$ has a natural multigraded structure.
By decomposing $\phi$ into homogeneous components, we write $\phi=\sum \phi_h$ with
\[
\phi_h \colon F_*W_n(S) \to S(h/p^{n-1}).
\]
Since $\phi(1)=1$, it follows that $\phi_0(1)=1$.
Then $\phi_0$ satisfies the commutative diagram in (2).
As $\phi_0$ is a multigraded homomorphism, we obtain $(1)\Rightarrow(2)$.

Next we show $(2)\Rightarrow(3)$.
Take a homomorphism $\phi \colon F_*W_n(S) \to S$ as in (2).
We construct a $W_n\sO_X$-module homomorphism $\phi_X \colon F_*W_n\sO_X \to \sO_X$ sending $1$ to $1$.
Since $X$ is covered by open subsets
\[
\{D_+(a) \mid a \ \text{homogeneous of positive degree}\}, \qquad D_+(a)=\Spec S[1/a]_0,
\]
and $W_n\sO_X(D_+(a))=W_n(S[1/a]_0)$, it suffices to define $\phi_a$ locally.
For homogeneous $a,b\in S$ with $\deg(a)=\deg(b)$, we have
\[
V^{r-1}[b/a] \in W_n(S[1/a]_0),
\]
and by rewriting,
\[
V^{r-1}[b/a]=F ([1/a]) \cdot V^{r-1}[a^{p^r-1}b].
\]
Define
\[
\phi_a(V^{r-1}[b/a]) := \frac{1}{a}\phi(V^{r-1}[a^{p^r-1}b]).
\]
Since $\deg(\phi(V^{r-1}[a^{p^r-1}b]))=\deg(a)$, this lies in $S[1/a]_0$, and hence $\phi_a$ is well-defined.
Gluing the local maps $\phi_a$ yields $\phi_X$, and thus $X$ is $n$-quasi-$F$-split.

Finally, assume that $S$ coincides with the section ring and prove $(3)\Rightarrow(1)$.
Suppose there exists $W_n\sO_X$-module homomorphism $\phi_X \colon F_*W_n\sO_X \to \sO_X$ as in Definition \ref{defn:quasi-F-split}.
Then, for \(h \in \mathbb{Z}^m\), the map \(\phi_X\) induces a \(W_n\sO_X\)-module homomorphism
\[
F_*W_n\sO_X(ph) \longrightarrow \sO_X(h).
\]
Consequently, we obtain a 
\(\bigoplus H^0(X, W_n\sO_X(h))\)-module homomorphism
\[
\psi \colon 
F_* \!\left(
\bigoplus_{h \in \tfrac{1}{p^{n-1}}\mathbb{Z}^m} H^0(X, W_n\sO_X(h))
\right)
\longrightarrow 
\bigoplus_{h \in \mathbb{Z}^m} H^0(X, \sO_X(h))
\cong S.
\]
By Lemma~\ref{wlem:natural map}, there exists a natural ring homomorphism
\[
\varphi \colon 
W_n(S) \longrightarrow 
\bigoplus_{h \in \tfrac{1}{p^{n-1}}\mathbb{Z}^m} H^0(X, W_n\sO_X(h)).
\]
Composing these maps gives
\[
\psi \circ (F_*\varphi) \colon 
F_*W_n(S) \longrightarrow S,
\]
which sends \(1\) to \(1\).

Furthermore, this map is a \(W_n(S)\)-module homomorphism. Indeed, it follows from
\begin{align*}
    \psi(F_*(\varphi(F(\alpha)\beta))) 
    &\overset{(\star_1)}{=} \psi(F_*(\varphi(F(\alpha))\,\varphi(\beta))) \\
    &\overset{(\star_2)}{=} \psi(F_*(F(\varphi(\alpha)\varphi(\beta)))) \\
    &\overset{(\star_3)}{=} \varphi(\alpha)\,\psi(F_*\varphi(\beta)) \\
    &\overset{(\star_4)}{=} \alpha\,\psi(F_*\varphi(\beta)),
\end{align*}
where \((\star_1)\) and \((\star_2)\) follow from the fact that 
\(\varphi\) is a ring homomorphism commuting with the Frobenius endomorphism,
\((\star_3)\) follows because \(\psi\) is a 
\(\bigoplus H^0(X, W_n\sO_X(h))\)-module homomorphism, and
\((\star_4)\) follows from the ring isomorphism
\[
\bigoplus_{h \in \mathbb{Z}^m} H^0(X, \sO_X(h)) \cong S.
\]

This proves (1) and completes the proof.
\end{proof}

\begin{rmk}\label{rmk:projectively normality}
Let $k$ be a field.
\begin{enumerate}
    \item[\textup{(1)}] Let $P$ be a projective space over $k$ and $X \subseteq P$ a complete intersection defined by $f_1,\ldots,f_l$.
    If $\dim X \geq 1$, then the section ring of $X$ coincides with the coordinate ring of $X$, namely the quotient of a polynomial ring by $f_1,\ldots,f_l$. 
    
    Indeed, set $X_0:=P$ and for $1 \leq i \leq l$, let $X_i$ be the complete intersection in $P$ defined by $f_1,\ldots,f_i$.
    For each $h \in \Z$, there is an exact sequence
    \[
    \xymatrix{
     0 \ar[r] & \sO_{X_{i-1}}(-d_i+h) \ar[r]^-{\cdot f_i} & \sO_{X_{i-1}}(h) \ar[r] & \sO_{X_i}(h) \ar[r] & 0 
    }
    \]
    where $d_i=\deg(f_i)$.
    By induction on $i$, this yields
    \[
    H^{j}(X_{i},\sO_{X_{i}}(h))=0
    \]
    for all $0 \leq i \leq l$, $1 \leq j \leq \dim X_i-1$, and $h \in \Z$.
    In particular, the natural restriction map
    \[
    H^0(X_{i-1},\sO_{X_{i-1}}(h)) \longrightarrow H^0(X_i,\sO_{X_i}(h))
    \]
    coincides with the quotient map modulo $(f_1,\ldots,f_l)$ for all $h \in \Z$, as claimed.

    \item[\textup{(2)}] Let $X_i \hookrightarrow \P^{n_i}_k$ be a complete intersection defined by $f_{i,1},\ldots,f_{i,l_i}$ with $\dim X_i \geq 1$ for $i=1,\ldots,m$.
    Set 
    \[
    X:=X_1 \times \cdots \times X_m \hookrightarrow \P^{n_1}_k \times \cdots \times \P^{n_m}_k=:P.
    \]
    Then the section ring of $X$ coincides with the coordinate ring of $X$, which is the quotient of the polynomial ring by $(f_{i,j} \mid 1 \leq i \leq m,\ 1 \leq j \leq l_i)$.
    This follows from K\"unneth's formula together with the argument in (1).
\end{enumerate}
\end{rmk}

\begin{rmk}\label{wrmk:projectively normality}
Let $k$ be a field.
\begin{enumerate}
    \item Let $P$ be a well-formed weighted projective space over $k$ and $X \subseteq P$ a hypersurface defined by $f$.
    Assume that the regular locus of $P$ contains all codimension-one points of $X$. 
    Then the coordinate ring of $X$ coincides with the section ring of $X$, namely the quotient of a polynomial ring by $f$.
    The proof is essentially the same as in Remark \ref{rmk:projectively normality}.
    In particular, the vanishing of cohomology groups follows from an argument similar to \cite[Theorem~1.4]{Dol}.

    \item Let $X_i \hookrightarrow P_i$ be a hypersurface in a well-formed weighted projective space over $k$ defined by $f_i$ for $i=1,\ldots,m$.
    Assume that the regular locus of each $P_i$ contains all codimension-one points of $X_i$.
    Set 
    \[
    X:=X_1 \times \cdots \times X_m \hookrightarrow P_1 \times \cdots \times P_m=:P.
    \]
    Then the coordinate ring of $X$ coincides with the section ring of $X$, which is the quotient of the polynomial ring by $(f_1,\ldots,f_m)$.
    Indeed, by the K\"unneth formula and the argument in (1), the natural map
    \[
    \bigoplus_{h \in \Z^m} H^0(U,\sO_P(h)) \longrightarrow \bigoplus_{h \in \Z^m} H^0(X \cap U,\sO_X(h))
    \]
    is surjective, where $U$ denotes the regular locus of $P$.
    Since $U$ contains all codimension-one points of $X$, the claim follows.

    \item Consider the hypersurface
    \[
    X=\{f:=z^2+x^2y+xy^2=0\} \subseteq \P(2:2:3)
    \]
    over $\var{\F}_2$.
    Since $k[x,y,z]/(f)$ is normal, $X$ is normal as well.
    As $X$ is a hypersurface, the regular locus of $P$ contains all codimension-one points of $X$.
    Therefore the section ring of $X$ coincides with its coordinate ring.
    Moreover, $X$ is one-dimensional and $-K_X$ is ample by the adjunction formula, so $X$ is a projective line and hence $F$-split.
    However, $k[x,y,z]/(f)$ is not $F$-split by Fedder's criterion.
    This shows that, in Proposition \ref{wprop:multigraded algebra and proj}, the assumption that the weighted projective space is well-formed cannot be omitted.
    Furthermore, $X$ can also be realized as $\{f=w=0\}$ in the well-formed weighted projective space $\P(2:2:3:1)$ over $\var{\F}_2$.
    Thus it is also necessary to assume that the regular locus of a weighted projective space contains all codimension-one points of $X$.
\end{enumerate}
\end{rmk}

\begin{prop}\label{prop:multigraded ring versus local ring}
Let $S$ be an $F$-finite $\mathbb{Z}_{\geq 0}^m$-graded ring such that $S_0$ is a field $k$ of positive characteristic. 
Let $\m:=\bigoplus_{h \neq 0} S_h$.
Then we have $\sht(S)=\sht(S_\m)$.
\end{prop}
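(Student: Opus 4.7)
The plan is to reformulate $n$-quasi-$F$-splitness via the evaluation-map criterion of Remark~\ref{rmk:evaluation}, exploit the multigraded structure on $S$, and reduce the comparison of $\sht(S)$ and $\sht(S_{\mathfrak m})$ to the following elementary fact from graded commutative algebra: for any finitely generated multigraded $S$-module $N$, one has $N=0$ iff $N_{\mathfrak m}=0$. This is immediate---given a homogeneous generator $n\in N_h$ with $n/1=0$ in $N_{\mathfrak m}$, pick $s=s_0+s_+\in S\setminus\mathfrak m$ with $s_0\in S_0=k^{\times}$, $s_+\in\mathfrak m$, and $sn=0$; projecting to the degree-$h$ component forces $s_0 n=0$, hence $n=0$.

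Set $H:=\Hom_{W_n(S)}(F_*W_n(S),S)$. Since $S$ is $F$-finite, $F_*W_n(S)$ is a finitely generated $W_n(S)$-module by \cite[Lemma~2.4]{KTY1}, so $H$ is a finitely generated multigraded $S$-module, with grading inherited from those on $F_*W_n(S)$ (via Lemma~\ref{lem:multigraded structure on Witt ring}) and $S$. By Remark~\ref{rmk:evaluation}, $\sht(S)\leq n$ iff the evaluation map $E_S\colon H\to S$, $\phi\mapsto\phi(1)$, is surjective, i.e.\ $C:=\mathrm{coker}(E_S)=0$; analogously, $\sht(S_{\mathfrak m})\leq n$ iff the corresponding cokernel $\mathrm{coker}(E_{S_{\mathfrak m}})$ vanishes.

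I would then establish the base-change identification
\[
H\otimes_S S_{\mathfrak m}\;\cong\;\Hom_{W_n(S_{\mathfrak m})}(F_*W_n(S_{\mathfrak m}),\,S_{\mathfrak m})
\]
compatibly with the evaluation maps, which yields $C_{\mathfrak m}\cong\mathrm{coker}(E_{S_{\mathfrak m}})$. Combining with the elementary fact applied to $C$,
\[
\sht(S)\leq n\iff C=0\iff C_{\mathfrak m}=0\iff \sht(S_{\mathfrak m})\leq n,
\]
and hence $\sht(S)=\sht(S_{\mathfrak m})$.

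I expect the main obstacle to be this base-change identification, which boils down to the compatibility $F_*W_n(S)\otimes_{W_n(S)}W_n(S_{\mathfrak m})\cong F_*W_n(S_{\mathfrak m})$---an analog of Lemma~\ref{lem:etale base change for Witt Frobenius} for the flat (non-étale) localization morphism $S\to S_{\mathfrak m}$, which cannot be obtained directly since $W_n$ does not commute with localization on the nose. I would reduce it to the local setting by passing to the $\mathfrak m$-adic completion via Proposition~\ref{prop:completion for witt ring} (exploiting $\widehat{S_{\mathfrak m}}\cong\widehat{S}$) and invoking faithful-flatness arguments in the spirit of the proof of Proposition~\ref{prop:completion}.
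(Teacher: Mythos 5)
Your overall strategy --- reformulate $n$-quasi-$F$-splitness via the evaluation map of Remark~\ref{rmk:evaluation}, use the multigraded structure on $H:=\Hom_{W_n(S)}(F_*W_n(S),S)$ from Lemma~\ref{lem:multigraded structure on Witt ring} and Proposition~\ref{wprop:multigraded algebra and proj}, and reduce to the vanishing of a graded cokernel --- is exactly the route the paper takes, and the graded-Nakayama step (a finitely generated graded $S$-module vanishes iff its localization at $\m$ does) is correct. You also correctly identify the one nontrivial ingredient, namely the base-change identification
\[
F_*W_n(S)\otimes_{W_n(S)}W_n(S_\m)\;\cong\;F_*W_n(S_\m),
\]
from which $H\otimes_S S_\m\cong\Hom_{W_n(S_\m)}(F_*W_n(S_\m),S_\m)$ follows by Hom-base-change along the flat map $W_n(S)\to W_n(S_\m)$ (legitimate since $F_*W_n(S)$ is a finite module over the Noetherian ring $W_n(S)$).

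Where you go wrong is in the claim that this identification ``cannot be obtained directly since $W_n$ does not commute with localization on the nose'' and the ensuing detour through completion. In fact $W_n$ \emph{does} commute with localization at the Teichm\"uller lifts: setting $T:=S\setminus\m$, the natural map $W_n(S)_{[T]}\to W_n(S_T)=W_n(S_\m)$ is an isomorphism, since it is surjective (every generator $V^r[a/t]$ equals $V^r[a\,t^{\,p^r-1}]\cdot[t]^{-1}$) and injective (if $x\in W_n(S)$ has each Witt component killed by some $t\in T$, then $[t]x=0$ because $[t](x_0,\ldots,x_{n-1})=(tx_0,t^px_1,\ldots)$; here one even has injectivity of $S\to S_\m$ since any homogeneous $s$ with $ts=0$, $t\notin\m$, dies already under the degree-$0$ part of $t$). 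Inverting $[T]$ on $F_*W_n(S)$ is the same as inverting $[T^p]$, hence the same as inverting $[T]$, which gives the displayed isomorphism with no appeal to completion. Your proposed workaround via $\widehat{S_\m}\cong\widehat S$ is not only unnecessary but also incomplete as stated: $S\to\widehat S$ is flat but \emph{not} faithfully flat (its image in $\Spec S$ consists only of primes contained in $\m$), so ``faithful-flatness arguments in the spirit of Proposition~\ref{prop:completion}'' cannot by themselves transport surjectivity of $E_S$ to and from $E_{\widehat S}$; you would have to invoke the graded structure again, at which point the detour accomplishes nothing. Replacing the completion step with the direct localization argument above makes your proof correct and essentially identical to the paper's.
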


\begin{proof}
By the proof of Proposition \ref{wprop:multigraded algebra and proj}, we can see that $\Hom_{W_n(S)}(F_*W_n(S),S)$ has a multigraded $W_n(S)$-module structure.
Thus, the surjectivity of $\Hom_{W_n(S)}(F_*W_n(S),S) \to S$ is equivalent to that of $\Hom_{W_n(S_\m)}(F_*W_n(S_\m),S_\m) \to S_\m$, and we conclude that $\sht(S)=\sht(S_\m)$ by Remark \ref{rmk:evaluation}.
\end{proof}

\subsection{Fedder type criteria for projective varieties}
In this subsection, we introduce Fedder type criteria for quasi-$F$-splitting of projective varieties.

\begin{conv}\label{conv:graded ring}
In this subsection, $k$ is an $F$-finite field of characteristic $p>0$ and $S:=k[x_1,\ldots,x_N]$ is a polynomial ring with a $\mathbb{Z}_{\geq 0}^m$-graded structure.
We assume that the degree of $x_i$ is non-zero for all $i$, which is denoted by $\mu_i$
We define $\mu$ by $\mu:=\sum \mu_i$.
Let 
\[
\m:=\bigoplus_{h \in \mathbb{Z}_{\geq 0}^m \backslash \{0\}} S_h =(x_1,\ldots,x_N).
\]
Let $R:=S_\m$. Then $(R,\m,k)$ is a regular local ring, where we denote the extension of $\m$ in $R$ by $\m$ by abuse of notation.
Let $y_1,\ldots,y_s$ be a $p$-basis of $k$. 
We denote the elements of 
\[
\{y_1^{j_1}\cdots y_s^{j_s}x_1^{i_1} \cdots x_N^{i_N} \mid 0 \leq j_1,\ldots,j_s, i_1,\ldots,i_N \leq p-1\}
\]
by $v_1, \ldots, v_{d}$.
Then $F_{*}v_1, \ldots, F_*v_{d}$ is a $p$-basis of $F_*S$ over $S$.
We may assume that
\[
v:=v_d= (y_1\cdots y_s x_1 \cdots x_N)^{p-1}.
\]
The dual basis is denoted by $u_1,\ldots,u_d$ and $u:=u_d$.
We note that $\deg(v)=(p-1)\mu$.
Let $a \in S$. We take the monomial decomposition $a=\sum^m_{i=1} M_i$.
We define $\Delta_1(a) \in S$ by 
\[
(0,\Delta_1(a))=(a,0)-\sum (M_i,0)\ \hspace{0.2cm} \text{in} \hspace{0.1cm} W_2(S).
\]
We can compute that 
\[
\Delta_1(a)=\sum_{\substack{0 \leq \alpha_1, \ldots,\alpha_m \leq p-1 \\ \alpha_1+\cdots+\alpha_m=p}} \frac{1}{p} \binom{p}{\alpha_1, \ldots ,\alpha_m}(M_1)^{\alpha_1} \cdots (M_m)^{\alpha_m}.
\]

\end{conv}



\begin{thm}\label{thm:Fedder's criterion for projective varieties}
Let $P_1,\ldots,P_m$ be projective spaces over a perfect field $k$ of characteristic $p>0$, and set $P \coloneq P_1\times \cdots \times P_m$.
Let $S$ denote the section ring of $P$; then $S$ is a polynomial ring $S=k[x_1,\ldots,x_N]$ with its natural multigraded structure.
Let $X$ be a closed subscheme of $P$ defined by homogeneous regular elements $f'_1,\ldots,f'_l \in S$.
Assume that the section ring of $X$ coincides with the coordinate ring of $X$ (see Definition \ref{wdefn:notation for multiprojective space}).

Define the $S$-module homomorphism
\[
\theta \colon F_*S \longrightarrow S, \qquad F_*a \longmapsto u\bigl(F_*(\Delta_1(f^{p-1})a)\bigr),
\]
and set $I_1:=(I^{[p]}:I)$.
Inductively, define
\[
I_{s+1}:=\theta(F_*I_s \cap \Ker(u))+I_1.
\]

Then the following hold:
\begin{itemize}
    \item[\textup{(1)}] 
    \[
\sht(S/(f'_1,\ldots ,f'_l)) =\sht(X)=\inf\{\,n \mid I_n \nsubseteq \m^{[p]}\,\}.
    \]

    \item[\textup{(2)}] Suppose that the degree of $f:=f'_1 \cdots f'_l$ equals $N$.
We set $f_1:=f^{p-1}$, and
    \[
    f_n:=f^{p-1}\Delta_1(f^{p-1})^{1+p+\cdots+p^{n-2}}
    \]
    for $n \geq 2$.
    Then
    \[
 \sht(S/(f'_1,\ldots ,f'_l)) =   \sht(X)=\inf\{\,n \mid f_n \notin \m^{[p^n]}\,\}.
    \]
\end{itemize}
\end{thm}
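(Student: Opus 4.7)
The plan is to reduce the projective statement to the local Fedder-type criteria from \cite{KTY1} via the coordinate/section ring of $X$.

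First, set $T := S/(f'_1, \ldots, f'_l)$, equipped with its induced multigrading. By hypothesis, $T$ coincides with both the coordinate ring and the section ring of $X$, so Proposition \ref{wprop:multigraded algebra and proj} gives $\sht(T) = \sht(X)$. Since $T$ is $F$-finite and positively multigraded with $T_0 = k$ a field, Proposition \ref{prop:multigraded ring versus local ring} further yields $\sht(T) = \sht(T_\m)$, where $\m = \bigoplus_{h \neq 0} T_h$. Thus both $\sht(X)$ and $\sht(T)$ reduce to computing the local quasi-$F$-split height $\sht(T_\m)$, and it remains only to identify the latter with the combinatorial invariants appearing on the right-hand sides of (1) and (2).

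For part (1), apply Theorem \ref{thm:Fedder's criterion} to the regular sequence $f'_1, \ldots, f'_l$ in the regular local ring $S_\m$: this gives $\sht(T_\m) = \inf\{n \mid I_n \nsubseteq \m^{[p]}\}$, where $\{I_n\}$ is defined inductively as in Convention \ref{conv:example} with initial term $(f^{p-1}) + ((f'_1)^p, \ldots, (f'_l)^p)$. To match the initial term $I_1 = (I^{[p]} : I)$ in our statement, I would invoke the standard colon-ideal identity for a regular sequence,
\[
(I^{[p]} : I) = (f^{p-1}) + I^{[p]},
\]
so the two definitions of $I_1$ agree and the inductive sequences coincide. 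Part (1) then follows immediately.

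For part (2), the hypothesis on the degree of $f$ is precisely the multigraded Calabi--Yau condition required by Theorem \ref{CY-fedder}: reading $N$ as the multidegree of $x_1 \cdots x_N$, one has $\deg(f) = \deg(f'_1) + \cdots + \deg(f'_l) = \deg(x_1 \cdots x_N)$. Applying Theorem \ref{CY-fedder} to $T_\m$ then yields $\sht(T_\m) = \inf\{n \mid f_n \notin \m^{[p^n]}\}$ with exactly the recursion for $f_n$ stated here. The main item requiring care---rather than a genuine obstacle---is to verify that the local data of Convention \ref{conv:graded ring} (the dual basis element $u$ of $\Hom_S(F_*S, S)$, the map $\theta$, the operator $\Delta_1$, and the ideal $I_1$) coincide under localization at $\m$ with the corresponding data of Convention \ref{conv:example}, so that the criteria of \cite{KTY1} can be quoted verbatim. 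Once this bookkeeping is in place, no additional computation is needed.
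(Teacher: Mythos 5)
Your proposal is correct and follows essentially the same route as the paper: reduce $\sht(X)$ to $\sht(S/(f'_1,\ldots,f'_l))$ via Proposition~\ref{wprop:multigraded algebra and proj} (and its localized version Proposition~\ref{prop:multigraded ring versus local ring}), then quote the local Fedder-type criteria Theorems~\ref{thm:Fedder's criterion} and~\ref{CY-fedder} from \cite{KTY1}. Your explicit appeal to Fedder's colon-ideal identity $(I^{[p]}:I)=(f^{p-1})+I^{[p]}$ for a regular sequence is a correct and worthwhile piece of bookkeeping that the paper's terse proof leaves implicit.
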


\begin{proof}
By Theorem~\ref{CY-fedder}, it suffices to show that 
\[
\sht(S/(f'_1,\ldots ,f'_l))=\sht(X).
\]
By Proposition \ref{wprop:multigraded algebra and proj}, if the section ring of $X$ coincides with its coordinate ring, the quasi-$F$-split height of $X$ equals that of the coordinate ring $S/(f'_1,\ldots,f'_l)$.
\end{proof}

\begin{rmk}\label{rmk:projective normality remark}
By Remark \ref{rmk:projectively normality}, the section ring of $X$ coincides with the coordinate ring of $X$ if $X=X_1 \times \cdots \times X_m$ for some complete intersections $X_i \hookrightarrow P_i$ with $\dim X_i \geq 1$ for every $i$.
\end{rmk}

\begin{thm}\label{thm:Fedder's criterion for projective varieties in weighted case}
Let $P$ be a well-formed weighted projective space (see Definition \ref{wdefn:well-formed}) over a perfect field $k$ of characteristic $p>0$.
Denote by $S$ the section ring of $P$, which is the polynomial ring $S:=k[x_1,\ldots,x_N]$ with a suitable grading.
Let $X$ be a hypersurface in $P$ defined by a homogeneous regular element $f \in S$ with $\dim(X) \geq 1$.
Assume that every codimension-one point of $X$ lies in the regular locus of $P$.
Then the following hold:
\begin{itemize}
    \item[\textup{(1)}] Define the sequence of ideals $\{I_n\}$ as in Theorem \ref{thm:Fedder's criterion for projective varieties} using $f$. Then
    \[
    \sht(X)=\inf\{\,n \mid I_n \nsubseteq \m^{[p]}\,\}.
    \]
    \item[\textup{(2)}] Assume that $\deg(f)=N$ and define the sequence $\{f_n\}$ as in Theorem \ref{thm:Fedder's criterion for projective varieties} using $f$. Then
    \[
    \sht(X)=\inf\{\,n \mid f_n \notin \m^{[p^n]}\,\}.
    \]
\end{itemize}
\end{thm}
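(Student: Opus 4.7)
The plan is to reduce the statement to the graded-ring Fedder-type criterion from \cite{KTY1} via the projective/multigraded dictionary developed earlier in this section. The only real work is to verify that the geometric hypotheses on $P$ and $X$ guarantee the two inputs needed: (a) the coordinate ring of $X$ equals its section ring, so that Proposition~\ref{wprop:multigraded algebra and proj} applies in both directions, and (b) the graded ring $S/(f)$ can be localized at $\m$ without changing the quasi-$F$-split height.

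First I would invoke Remark~\ref{wrmk:projectively normality}(1): since $P$ is well-formed, $f$ is a single regular homogeneous element with $\dim(X)=\dim(P)-1\ge 1$, and every codimension-one point of $X$ lies in the regular locus of $P$, the coordinate ring of $X$ coincides with its section ring, and both equal $S/(f)$. Combining this with Proposition~\ref{wprop:multigraded algebra and proj} gives
\[
\sht(X)=\sht\bigl(S/(f)\bigr).
\]
Next, since $S/(f)$ is $\mathbb{Z}_{\ge 0}^m$-graded with degree-zero part the field $k$ and with irrelevant ideal $\m=(x_1,\ldots,x_N)$, Proposition~\ref{prop:multigraded ring versus local ring} gives
\[
\sht\bigl(S/(f)\bigr)=\sht\bigl((S/(f))_\m\bigr).
\]

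Finally I would apply the Fedder-type criteria of \cite{KTY1} directly to the local ring $(S/(f))_\m$ in the one-element case $m=1$, $f'_1=f$. For (1), Theorem~\ref{thm:Fedder's criterion} yields
\[
\sht\bigl((S/(f))_\m\bigr)=\inf\{n\mid I_n\nsubseteq \m^{[p]}\},
\]
where one only needs to observe that the two possible definitions of $I_1$ agree for a principal ideal: $(I^{[p]}:I)=((f^p):(f))=(f^{p-1})$, which is exactly $(f^{p-1})+(f^p)$. For (2), under the Calabi--Yau numerical condition $\deg(f)=N=\sum\mu_i$, Theorem~\ref{CY-fedder} applies verbatim to the graded polynomial ring $S$ with the weighted grading (the proof of \emph{loc.\ cit.} only uses the homogeneity of $f$ relative to $\deg$, not that the grading is standard) and gives
\[
\sht\bigl(S/(f)\bigr)=\inf\{n\mid f_n\notin\m^{[p^n]}\},
\]
completing the proof by the chain of equalities above.

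There is no genuine obstacle; the only point requiring care is the verification that Theorem~\ref{CY-fedder} and Theorem~\ref{thm:Fedder's criterion} are insensitive to replacing the standard grading by a positive $\mathbb{Z}_{\ge 0}^m$-grading on $S$, because the Fedder-type machinery depends only on the ring structure of $S$ and on $f$ being in the maximal ideal $\m=(x_1,\ldots,x_N)$. The well-formedness of $P$ and the codimension-one hypothesis on $X$ enter only through Remark~\ref{wrmk:projectively normality} to justify identifying the section ring with $S/(f)$, and the example in Remark~\ref{wrmk:projectively normality}(3) shows that these hypotheses cannot be dropped.
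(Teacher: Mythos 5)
Your proposal is correct and follows the same overall strategy as the paper: use Remark~\ref{wrmk:projectively normality}(1) to identify the section ring of $X$ with the coordinate ring $S/(f)$, apply Proposition~\ref{wprop:multigraded algebra and proj} to transfer the height computation to $S/(f)$, and then invoke the Fedder-type criteria from \cite{KTY1}. The one small departure is in part~(1): the paper cites the graded-ring Fedder criterion \cite[Corollary~5.4]{KTY1} directly, whereas you insert a detour through the local ring $(S/(f))_\m$ via Proposition~\ref{prop:multigraded ring versus local ring} and then apply the local Fedder criterion (Theorem~\ref{thm:Fedder's criterion}, i.e.\ \cite[Theorem~4.11]{KTY1}); these are equivalent, and your check that $(I^{[p]}:I)=(f^{p-1})$ agrees with the $I_1$ of Convention~\ref{conv:example} for a principal $I$ is exactly the compatibility one needs. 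Your observation that the grading-dependence is a non-issue is also accurate, and for part~(2) it is even cleaner than you suggest: Theorem~\ref{CY-fedder} is already stated for an arbitrary positive $\mathbb{Z}_{\ge 0}^m$-grading with the $x_i$ homogeneous, so no additional verification is required --- the weighted grading is literally within its hypotheses.
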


\begin{proof}
By Remark~\ref{wrmk:projectively normality}, the section ring of \(X\) coincides with its coordinate ring. 
Therefore, the assertions follow from \cite[Corollary~5.4 and Theorem~5.8]{KTY1} together with Proposition \ref{wprop:multigraded algebra and proj}.
\end{proof}

\begin{eg}[\textup{Fermat type, cf.~\cite[Theorem 5.1]{GK2}}]\label{eg:Fermat type}
Let $N \geq 4$, $S:=k[x_1,\ldots,x_N]$, and
\[
f:=x_1^N+x_2^N+ \cdots +x_N^N.
\]
We show 
\[
\sht ( \Proj (S/f) ) =
\sht(S/f)=
\begin{cases}
1     & p \equiv 1 \mod N  \\
\infty & p \nequiv 1 \mod N.
\end{cases}
\]
by using Theorem \ref{thm:Fedder's criterion for projective varieties} as follows:
First, we consider the case where $p-1$ is divisible by $N$.
Then $f^{p-1}$ contains the term
\[
\binom{p-1}{\frac{p-1}{N},\ldots,\frac{p-1}{N}} x_1^{p-1} \cdots x_N^{p-1},
\]
and thus $f^{p-1} \notin \m^{[p]}$ and $\sht(S/f) =1$.

Next, consider the remaining case.
By Corollary \ref{cor:non-q-split criterion}, it is enough to show that
\[
f^{p-2} f^{p(p-2)}\Delta_1(f) \in \m^{[p^2]}.
\]
We take a term $x_1^{pNa_1}\cdots x_N^{pNa_{N}}$ in $f^{p(p-2)}$, then $a_1+\cdots+a_N=p-2$.
If it is not contained in $\m^{[p^2]}$, then $Na_1,\ldots,Na_N \leq p-1$.
Since $p \nequiv 1 \mod N$, we have $Na_1,\ldots,Na_N \leq p-2$.
Combining the condition $a_1+ \cdots +a_N=p-2$, we have $Na_i=p-2$ for all $i=1,\ldots,N$, and in particular, $p \equiv 2 \mod N$.
Therefore, if a term $x_1^{b_1} \cdots x_N^{b_N}$ appearing in $f^{p-2}f^{p(p-2)}\Delta_1(f)$ is not contained in $\m^{[p^2]}$, then $b_i=p(p-2)+N(c_i+d_i) \leq p^2-1$, where
\[
c_1+\cdots+c_N=p-2,\ \mathrm{and}\ d_1+\cdots+d_N=p.
\]
Thus we have $N(c_i+d_i) \leq 2p-1$ for all $i$.
Since $p \equiv 2 \mod N$ and $N \geq 4$, we have $2p \nequiv 1,\ 2 \mod N$, thus $n(c_i+d_i) \leq 2p-3$.
It contradicts the above summation condition.
Therefore, we have $f^{p-2}f^{p(p-2)}\Delta_1(f) \in \m^{[p^2]}$ and $\sht(S/f)=\infty$ by Theorem~\ref{thm:Fedder's criterion for projective varieties}.
\end{eg}

\begin{eg}\label{eg:cusp}
Let $S:=k[x,y,z]$ and $f:=x^3+y^2z$.
By Theorem \ref{thm:Fedder's criterion for projective varieties}, we show that both $S/f$ and $\Proj(S/f)$ are not quasi-$F$-split as follows:

First, we prove $f^{p-1} \in \m^{[p]}$.
Take an integer $0 \leq i \leq p-1$ and consider the monomial
\[
(x^3)^i(y^2z)^{p-1-i}.
\]
If it is not contained in $\m^{[p]}$, then we have
\[
3i \leq p-1,\ \text{and}\ 2(p-1-i) \leq p-1,
\]
and we obtain a contradiction.

Next,  we assume that $p > 5$.
Then one can see that $\Delta_1(f^{p-1}) \in \m^{[p^2]}$.
Indeed, if $\Delta_1(f^{p-1}) \notin \m^{[p^2]}$, then we have 
\[
p^2-6p+5 \leq 0,
\]
a contradiction.
Note that $\Delta_1(f^{p-1})$ is a $k$-linear combination of monomials, each of which is a  product of $p(p-1)$ (not necessarily distinct) monomials of $f$.
Thus, Corollary \ref{cor:non-q-split criterion} shows that $S/f$ is not quasi-$F$-split.

Finally, we assume that $p \leq 5$.
Then we have
\begin{equation*}  \label{eq: cases f}
\Delta_1(f^{p-1}) \equiv
    \begin{cases}
        ax^{24}y^{24}z^{12}  &  p=5,   \\
        bx^6y^{8}z^4        &  p=3    \\
        cx^3y^2z          &  p=2
    \end{cases}
    \mod \m^{[p^2]},
\end{equation*}
where $a,b,c\in k$.
An easy computation shows that
\[
\Delta_1(f^{p-1})^{1+p} \in \m^{[p^3]}\ \text{and}\ f^{p-1}\Delta_1(f^{p-1}) \in \m^{[p^2]},
\]
and in particular, 
\[
f_n:=f^{p-1} \Delta_1(f^{p-1})^{1+p+\cdots+p^{n-2}} \in \m^{[p^n]}.
\]
Now, we conclude by Theorem~\ref{thm:Fedder's criterion for projective varieties} that $S/f$ is not quasi-$F$-split.
\end{eg}

\section{Applications of Fedder type criteria}

\subsection{Inversion of adjunction}
Let $X$ be a hypersurface in a projective space.
By the original Fedder's criterion, one readily verifies that if a general hyperplane section of $X$ is $F$-split, then $X$ is $F$-split as well, i.e.,\ the inversion of adjunction holds for $F$-splitting.
In this subsection, we discuss an analogue in the quasi-$F$-split setting. First, we show that a positive result holds when the hyperplane section is Calabi--Yau.
The key ingredient in the proof is a Fedder type criterion in the Calabi--Yau setting \cite[Theorem~5.8]{KTY1}.
Next, we present an example showing that, in general, inversion of adjunction fails for quasi-$F$-splitting.

\begin{prop}\label{cor:hypersurface inversion of adjunction}
Let $k$ be a perfect field of positive characteristic.
Let $X \subseteq \P^{N-1}_k$ be a hypersurface defined by a homogeneous polynomial $f$ of degree $m \leq N$.
Let $Y$ be a complete intersection of $X$ with $(N-m)$ hyperplanes, and assume that $\dim Y = m-2  >0$.
Then $\sht(Y) \geq \sht(X)$.
\end{prop}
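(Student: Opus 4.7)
The plan is to reduce to the Calabi--Yau Fedder criterion (Theorem~\ref{CY-fedder}) applied to $Y$. After a linear change of coordinates I may assume the $N-m$ general hyperplanes cutting $Y$ out of $X$ are $V(x_{m+1}),\ldots,V(x_N)$; then $Y = V(f, x_{m+1},\ldots,x_N)\subseteq \mathbb{P}^{N-1}$ is a complete intersection of type $(m,1,\ldots,1)$. Since $m + (N-m) = N$ and $\dim Y = m-2 \ge 1$, Theorem~\ref{CY-fedder} applies with the regular sequence $f, x_{m+1},\ldots,x_N$, giving
\[
\sht(Y) \;=\; \inf\bigl\{\,n : g_n \notin \m^{[p^n]}\,\bigr\},
\]
where $g := f\cdot x_{m+1}\cdots x_N$ and $g_n := g^{p-1}\,\Delta_1(g^{p-1})^{1+p+\cdots+p^{n-2}}$.

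Next I would establish the factorization
\[
g_n \;=\; f_n\cdot (x_{m+1}\cdots x_N)^{p^n-1},
\]
where $f_n := f^{p-1}\,\Delta_1(f^{p-1})^{1+p+\cdots+p^{n-2}}$. This follows from the multiplicative identity $\Delta_1(h\cdot a) = h^p\,\Delta_1(a)$ for a monic monomial $h$ (immediate from the definition in Convention~\ref{conv:example}), applied with $h = (x_{m+1}\cdots x_N)^{p-1}$, together with the arithmetic $(p-1) + p(p-1)(1+p+\cdots+p^{n-2}) = p^n - 1$. Consequently, $g_n \notin \m^{[p^n]}$ is equivalent to the existence of a monomial $M$ of $f_n$ that is pure in $x_1,\ldots,x_m$ with each exponent at most $p^n-1$; in particular $f_n \notin \m^{[p^n]}$.

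The final step is to convert the witness $M$ into the bound $\sht(X)\le n$. Since $X$ is a hypersurface of positive dimension in $\mathbb{P}^{N-1}$, its section ring equals its coordinate ring (Remark~\ref{rmk:projectively normality}), so $\sht(X) = \sht\bigl((S/(f))_{\m}\bigr)$ by Propositions~\ref{wprop:multigraded algebra and proj} and~\ref{prop:multigraded ring versus local ring}, and by Theorem~\ref{thm:Fedder's criterion} it suffices to exhibit an element of the iteratively defined ideal $I_n^X$ outside $\m^{[p]}$. This is the main obstacle: in contrast to the Calabi--Yau case, $f_n$ is not automatically identified with a canonical generator of $I_n^X$ modulo $\m^{[p]}$ when $\deg f < N$, so the witness $M$ does not transfer verbatim. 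To overcome this I would use the relation
\[
\theta^Y \;=\; (x_{m+1}\cdots x_N)^{p-1}\,\theta^X,
\]
between the $\theta$-operators of Convention~\ref{conv:example} for the defining sequences of $Y$ and $X$ (a consequence of the factorization of $\Delta_1(g^{p-1})$ and the $S$-linearity of $u$), and then induct on $n$ to transfer a witness for $I_n^Y\setminus \m^{[p]}$ (which the CY Fedder criterion furnishes) to a witness for $I_n^X\setminus \m^{[p]}$ by stripping off the accumulated $(x_{m+1}\cdots x_N)^{p-1}$-factors at each level of the iteration; the base case $n=1$ is the classical Fedder implication $\bar{f}^{p-1}\notin \bar{\m}^{[p]} \Rightarrow f^{p-1}\notin \m^{[p]}$, where $\bar f := f|_{x_{m+1}=\cdots=x_N=0}$. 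The same strategy, with Theorem~\ref{thm:Fedder's criterion for projective varieties in weighted case} replacing Theorem~\ref{CY-fedder}, handles the well-formed weighted projective-space variant alluded to in the introduction.
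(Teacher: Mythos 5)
Your preparatory steps are correct: after a linear change of coordinates, Theorem~\ref{CY-fedder} applies to $Y$; the identity $\Delta_1(h\,a)=h^{p}\Delta_1(a)$ for a monomial $h$ gives $g_n = f_n\cdot(x_{m+1}\cdots x_N)^{p^n-1}$; and the exponent count shows that $g_n\notin\m^{[p^n]}$ forces a monomial of $f_n$ that is pure in $x_1,\ldots,x_m$. You also correctly observe $\theta^Y(F_*a)=e\,\theta^X(F_*a)$ with $e:=(x_{m+1}\cdots x_N)^{p-1}$. Since the paper's own proof merely cites \cite[Lemma~5.14]{KTY1}, your argument is a genuine re-derivation from the criteria stated in this paper, which is a worthwhile route.

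The final conversion step, however, is where the sketch leaves a real gap. First, the accumulation is not of the simple form $\theta_Y^{\,l}=e^{\,l}\theta_X^{\,l}$; iterating the relation in fact gives
\[
\theta_Y^{\,l}(F^{\,l}_*a)\;=\;e\cdot\theta_X^{\,l}\!\bigl(F^{\,l}_*\bigl(e^{\,p+p^2+\cdots+p^{l-1}}a\bigr)\bigr),
\]
so the candidate element of $I_n^X$ is $a_n:=\theta_X^{\,n-1}\!\bigl(F^{\,n-1}_*(f^{p-1}(x_{m+1}\cdots x_N)^{p^{n-1}-1})\bigr)$, not a naively stripped $\theta_X^{\,n-1}(F^{\,n-1}_*f^{p-1})$. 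Second, and more seriously, even after noting $a_n\notin\m^{[p]}$, you still must prove $a_n\in I_n^X$. Because $I_{l+1}^X=\theta^X\bigl(F_*I_l^X\cap\Ker(u)\bigr)+I_1^X$, this requires $u(F_*a_l)=0$ for every $1\le l<n$, where $a_l:=\theta_X^{\,l-1}(F^{\,l-1}_*a_1)$. This is an additional verification that your proposal never addresses: one computes $u(F_*a_l)=u^{\,l}\bigl(F^{\,l}_*(f_l(x_{m+1}\cdots x_N)^{p^{n-1}-1})\bigr)$, checks by a degree count that only the monomial $(x_1\cdots x_m)^{p^l-1}$ of $f_l$ can contribute, and then uses the minimality of $n=\sht(Y)$ (via $g_l\in\m^{[p^l]}$, equivalently the vanishing of that coefficient in $f_l$) to conclude $u(F_*a_l)=0$. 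In the Calabi--Yau case this $\Ker(u)$ condition is automatic from the degree being exactly $N(p^l-1)$, which is why it never surfaces for $Y$; for $X$, with $\deg f=m<N$, it is not automatic. Once this is supplied the argument closes, so the strategy is viable, but as written the crux is hand-waved and the key $\Ker(u)$ obstruction is not mentioned at all.
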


\begin{proof}
The claim follows from \cite[Lemma~5.14]{KTY1} together with Theorem~\ref{thm:Fedder's criterion for projective varieties}.
\end{proof}

\begin{prop}\label{cor:hypersurface inversion of adjunction weighted}
Let $k$ be a perfect field of positive characteristic. 
Let $X \subseteq P:=\P(\mu_1,\ldots,\mu_N)$ be a hypersurface in a well-formed weighted projective space defined by a homogeneous polynomial $f$ of degree $\sum_{i=1}^N \mu_i-1$.
Let $Y \subseteq P$ be the complete intersection defined by $f$ and a linear form $g$ with $\deg(g)=1$.
Assume moreover that $Y$ is normal. 
Then $\sht(Y) \geq \sht(X)$.
\end{prop}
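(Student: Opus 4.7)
The plan parallels the proof of Proposition~\ref{cor:hypersurface inversion of adjunction}: reduce $Y$ to a Calabi--Yau hypersurface whose Fedder witness from Theorem~\ref{thm:Fedder's criterion for projective varieties in weighted case}(2) can then be transferred back to $X$ via (a weighted version of) \cite[Lemma~5.14]{KTY1}. The crucial numerical coincidence driving the argument is that $\deg(f)+\deg(g)=(\mu-1)+1=\mu=\sum_i\mu_i$, so that $Y$ is Calabi--Yau.

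First, since $g$ is homogeneous of degree $1$, some weight $\mu_i$ must equal $1$; after renaming the variables and performing a linear change of coordinates I may assume $\mu_N=1$ and $g=x_N$. Set $\bar S:=k[x_1,\ldots,x_{N-1}]$ with the induced weighted grading, $\bar\m:=(x_1,\ldots,x_{N-1})$, and $\bar f:=f\bmod x_N\in\bar S$. Then $\deg\bar f=\mu-1=\sum_{i<N}\mu_i$, so $Y=V(\bar f)\subset\bar P:=\mathbb{P}(\mu_1,\ldots,\mu_{N-1})$ is a Calabi--Yau hypersurface; replacing $\bar P$ by its well-formed model if necessary, the normality of $Y$ guarantees that every codimension-$1$ point of $Y$ lies in the regular locus of $\bar P$, so Theorem~\ref{thm:Fedder's criterion for projective varieties in weighted case}(2) applies to $Y$ and yields
\[
\sht(Y)=\inf\{n\mid\bar f_n\notin\bar\m^{[p^n]}\},\qquad \bar f_n=\bar f^{p-1}\Delta_1(\bar f^{p-1})^{1+p+\cdots+p^{n-2}}.
\]
The specialization $x_N\mapsto 0$ simply deletes monomials containing $x_N$ from monomial decompositions, and $\Delta_1$ is built purely from these decompositions, hence commutes with the specialization. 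Consequently $\bar f_n=f_n\bmod x_N$, where $f_n\in S$ is the analogous expression for $f$.

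The key step is now to transfer this witness back to $X$. By \cite[Lemma~5.14]{KTY1}, whenever $f_n\bmod x_N\notin\bar\m^{[p^n]}$, the Fedder ideal $I_n\subseteq S$ attached to $f$ through the recursion $I_{n+1}=\theta(F_*I_n\cap\ker u)+I_1$ satisfies $I_n\not\subseteq\m^{[p]}$. Combined with Theorem~\ref{thm:Fedder's criterion for projective varieties in weighted case}(1), this gives $\sht(X)\le n$ whenever $\sht(Y)\le n$, and therefore $\sht(X)\le\sht(Y)$.

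The main obstacle is precisely this last transfer step: one must verify that the combinatorial induction underlying \cite[Lemma~5.14]{KTY1}---which builds an explicit element of $I_n$ whose appropriate Frobenius-twisted reduction modulo $x_N$ realizes $\bar f_n$---remains valid in the weighted multigraded setting. Because that induction only tracks exponent arithmetic of monomials and the formal behavior of $\theta$, $u$, and the recursion for $I_n$, without using the specific values of the weights, the argument carries over verbatim and the conclusion $\sht(X)\le\sht(Y)$ follows.
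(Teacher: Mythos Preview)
Your approach is essentially the paper's: reduce to \cite[Lemma~5.14]{KTY1} at the coordinate-ring level and then identify the resulting ring heights with $\sht(X)$ and $\sht(Y)$ via the results of Section~3. The paper is slightly more economical---it invokes the lemma directly as the inequality $\sht(S/(f,g))\ge\sht(S/f)$, uses only the easy direction $\sht(S/f)\ge\sht(X)$ of Proposition~\ref{wprop:multigraded algebra and proj} for $X$, and separates out the case $\dim Y=0$---but the substance is the same.

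One caution: your proposed fix ``replacing $\bar P$ by its well-formed model if necessary'' does not work as written. Passage to the well-formed model changes the graded coordinate ring and hence the polynomial $\bar f$, so the identity $\bar f_n=f_n\bmod x_N$ on which your transfer step relies would be destroyed. The paper avoids this by keeping everything inside the original polynomial ring $S$ and simply citing Theorem~\ref{thm:Fedder's criterion for projective varieties in weighted case} for the identification $\sht(Y)=\sht(S/(f,g))$; you should do the same rather than attempt to repair $\bar P$. (Your observation that normality of $Y$ forces its codimension-one points into the regular locus of the ambient space is correct, since a Noetherian local ring whose quotient by a non-zerodivisor is regular is itself regular, and this is precisely the hypothesis the theorem needs.)
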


\begin{proof}
If $\dim(Y)=0$, the claim follows from an argument similar to \cite[Lemma~5.14]{KTY1}.
Thus we may assume $\dim(Y) \geq 1$.
Let $S$ be the section ring of $P$.
Then $S$ is the polynomial ring $k[x_1,\ldots,x_N]$ with $\deg(x_i)=\mu_i$.
After a change of variables, we may assume $g=x_1$ (and $\mu_1=1$).
By \cite[Lemma~5.14]{KTY1}, we have $\sht(S/(f,g)) \geq \sht(S/f)$.
By Proposition~\ref{wprop:multigraded algebra and proj}, we have $\sht(S/f) \geq \sht(X)$.
Hence it remains to show $\sht(Y)=\sht(S/(f,g))$, 
which follows 
from Theorem~\ref{thm:Fedder's criterion for projective varieties in weighted case}.
\end{proof}

The following example shows that the inversion of adjunction-type result fails for quasi-$F$-splitting.

\begin{eg}\label{eg:counterexample to inversion of adjunction}
Let $S:=k[x,y,z,w,u,s]$ and $p=2$.
Let $g:=xys^2+zwu^2+y^3w+x^3z$.
Then we have
\begin{itemize}
    \item[\textup{(1)}] $\sht(S/g)=\infty$ but
    \item[\textup{(2)}] $\sht(S/(s,g))=2$.
\end{itemize}

First, we show (1).
Let $J:=(ys^2+x^2z,zu^2+y^3)$. Then, we have
\[
J \supseteq \theta_g(F_*J \cap v^\perp) +(g),
\]
hence $\sht(S/g)=\infty$ by Corollary \ref{cor:Fedder's criterion for quasi-F-splitting}.

Next, we show (2). We have $S/(g,s) \cong k[x,y,z,w,u]/f$, where 
\[
f:=zwu^2+y^3w+x^3z.
\]
We have $u(F_* (zwuf ))=0$ and $\theta_f(F_* (zwuf))=xyzwu \notin \m^{[p]}$, hence $\sht(S/(s,g))=2$.
\end{eg}

\begin{eg}
\label{eg:strange graded example}
Let $S:=k[x,y,z,w,u,s]$ and $p:=2$.
Define
\begin{eqnarray*}
g := xys^2+zwu^2+z^3u+y^3w+x^3z.
\end{eqnarray*}
Note that $g$ is a homogeneous polynomial of degree $4$.
Then we have
\begin{itemize}
    \item[\textup{(1)}] $\sht(S/g)=3$, but
    \item[\textup{(2)}] $\sht(S/(s,g))=2$. 
\end{itemize}

We show (1).
We first show that $\sht(S/g) \geq 3$.
Clearly, we have $g \in \m^{[2]}$.
We can see that $u(F_{\ast}g\Delta_{1}(g)\cdot x^{i_{1}}y^{i_{2}}z^{i_{3}}w^{i_{4}}u^{i_{5}}s^{i_{6}})$ is not contained in $\m^{[2]}$ if and only if 
\[
(i_{1},i_{2},i_{3},i_{4},i_{5},i_{6}) \equiv (0,0,1,1,1,1) \pmod 2.
\]
Moreover, we have  
\[
u(F_{\ast}(zwusg\Delta_{1}(g)))=zwu^2s+z^3us+y^3ws+x^3zs+xyzwu,
\]
and 
\[
F_{\ast}g = sF_{\ast}(xy)+uF_{\ast}(zw)+xF_{\ast}(xz)+yF_{\ast}(yw)+zF_{\ast}(zw).
\]
Suppose that 
\[
u(F_{\ast}(g\Delta_{1}(g)a)) \notin \m^{[2]},
\]
where $a\in R$.
Moreover, we decompose $a$ as
\[
a =  \sum_{0 \leq i_{j} \leq p-1} a_{i_{1},i_{2},i_{3},i_{4},i_{5},i_{6}} F_{\ast}( x^{i_{1}}y^{i_{2}}z^{i_{3}}w^{i_{4}}u^{i_{5}}s^{i_{6}}).
\]
By assumption, $a_{0,0,1,1,1,1}$ has a non-zero constant term or a non-zero $s$-term.
Therefore,
\[
u(ag)=a_{0,0,1,1,1,1}s+a_{1,1,0,0,1,1}u+a_{0,1,0,1,1,1}x+a_{1,0,1,0,1,1}y+a_{1,1,0,0,1,1}z
\]
also has a non-zero $s$-term or a non-zero $s^2$-term.
Thus we have $I_{2}(g) \subseteq \m^{[2]}$.
We next show that $I_{3}(g) \nsubseteq \m^{[2]}$.
We can see that 
\[
u(F_* (z^2u^2(xyz^2us+xywu^2s)g ))=0, 
\]
\[
u(F_* (\theta_{g}(F_*(z^2u^2(xyz^2us+xywu^2s)g )))) =0,
\]
and
\[
\theta_{g}(F_* (\theta_{g}(F_* (z^2u^2(xyz^2us+xywu^2s)g))) ) = xyzus+x^2y^2z \notin \m^{[2]}.
\]
Therefore, we have the claim.

Finally, (2) follows similarly to Example \ref{eg:counterexample to inversion of adjunction}.
\end{eg}

\subsection{Fiber products}
In this subsection, we compute the quasi-$F$-split height of a fiber product of projective varieties.
First, we prove that if all but one factor are $F$-split, then the quasi-$F$-split height of the fiber product coincides with that of the remaining factor.
Next, we show that a fiber product of non-$F$-split complete intersection varieties is never quasi-$F$-split.
We note that, after the first version of this paper was completed, the contents of this chapter were generalized to a more general setting by \cite{YobukoHodgeWitt}, by a completely different proof.

\begin{prop}\label{prop:fiber products}
Let $X$ and $Y$ be projective varieties over a field $k$ of positive characteristic such that $H^0(X,\sO_X)\cong H^0(Y,\sO_Y)\cong k$.
If $\sht(X)=n$ and $\sht(Y)=1$, then $\sht(X \times_k Y)=n$.
\end{prop}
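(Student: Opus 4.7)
The plan is to establish $\sht(X \times_k Y) = n$ by proving the two inequalities separately.

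For the lower bound $\sht(X \times_k Y) \geq n$, I would apply Proposition~\ref{prop:fiber space} to the first projection $p_1 \colon X \times_k Y \to X$. Since $Y$ is proper and flat over $k$, flat base change combined with the hypothesis $H^0(Y,\sO_Y) = k$ yields $(p_1)_* \sO_{X \times_k Y} \cong \sO_X$. Hence the structure map $\sO_X \to (p_1)_* \sO_{X \times_k Y}$ is an isomorphism, which in particular splits as $\sO_X$-modules. Proposition~\ref{prop:fiber space} then gives $\sht(X) \leq \sht(X \times_k Y)$, i.e., $\sht(X \times_k Y) \geq n$.

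For the reverse inequality $\sht(X \times_k Y) \leq n$, the strategy is to reduce to a local algebra question and apply the general Fedder-type criterion \cite[Theorem~4.8]{KTY1}. Choose very ample line bundles $L_X$ on $X$ and $L_Y$ on $Y$, and let $R_X, R_Y$ denote the corresponding singly-graded section rings. K\"unneth's formula, combined with the hypothesis $H^0(X,\sO_X) = H^0(Y,\sO_Y) = k$, identifies the bigraded section ring of $X \times_k Y$ with respect to $L_X \boxtimes L_Y$ with the tensor product $R_X \otimes_k R_Y$. Propositions~\ref{wprop:multigraded algebra and proj} and \ref{prop:multigraded ring versus local ring} then reduce the upper bound to the analogous statement at the level of the localizations of these graded rings at their irrelevant maximal ideals.

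At the local level, I would present $R_X = S_X/I_X$ and $R_Y = S_Y/I_Y$ with $S_X, S_Y$ regular local $k$-algebras, obtaining a presentation $(R_X \otimes_k R_Y)_{\mathrm{loc}} \cong (S_X \otimes_k S_Y)/(I_X + I_Y)$ with $S_X \otimes_k S_Y$ again regular. The criterion of \cite[Theorem~4.8]{KTY1} translates $n$-quasi-$F$-splitness into the non-containment of an iteratively-defined ideal $I_n$ in $\m^{[p]}$. The crucial algebraic input is that the top-form generator $u$ of $\Hom_{S_X \otimes_k S_Y}(F_*(S_X \otimes_k S_Y), S_X \otimes_k S_Y)$ decomposes as $u_X \otimes u_Y$, and the associated operator $\theta$ is therefore ``multiplicative'' on pure tensors. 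From this one shows that the iteratively-defined ideal $I_n$ for the product contains the product $I_n^X \cdot I_1^Y$ of the Fedder ideals for the individual rings. The hypotheses $I_n^X \not\subseteq \m_{S_X}^{[p]}$ and $I_1^Y \not\subseteq \m_{S_Y}^{[p]}$ (coming from $\sht(R_X) = n$ and $\sht(R_Y) = 1$) then combine to give $I_n \not\subseteq (\m_{S_X} + \m_{S_Y})^{[p]}$, yielding $\sht(X \times_k Y) \leq n$.

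The main obstacle is verifying the tensor-product compatibility of the Fedder data. Witt vectors of a tensor product do not decompose as a tensor product of Witt vectors, so the $\Delta_1$-type corrections that appear in each iteration of the criterion need careful bookkeeping across both factors. The $F$-splitness of $Y$ is essential here: it guarantees that the ``$Y$-contribution'' at each iteration of $\theta$ stays outside $\m_{S_Y}^{[p]}$, ensuring that the iteration on the $X$-side proceeds as it would in the absence of the $Y$-factor.
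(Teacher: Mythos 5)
Your lower bound argument is correct and is exactly the observation the paper relies on implicitly: $(p_1)_*\sO_{X\times Y}\cong\sO_X$ by flat base change and $H^0(Y,\sO_Y)=k$, so Proposition~\ref{prop:fiber space} gives $\sht(X\times Y)\geq\sht(X)=n$. Your reduction of the upper bound to the section rings via K\"unneth, Proposition~\ref{wprop:multigraded algebra and proj}, and Proposition~\ref{prop:multigraded ring versus local ring} also matches the paper. The gap is in the local step.

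You cite \cite[Theorem~4.8]{KTY1} (correctly, the version for arbitrary ideals $I$ in a regular local ring) but then describe its content as ``non-containment of an iteratively-defined ideal $I_n$ in $\m^{[p]}$.'' That is the statement of \cite[Theorem~4.11]{KTY1} (Theorem~\ref{thm:Fedder's criterion} here), which is formulated only for \emph{complete intersections}: the ideals $I_n$ and the operator $\theta$ are built out of $\Delta_1(f^{p-1})$ for a single element $f$, the product of a defining regular sequence. The section ring $R_X$ of a projectively normal embedding is in general \emph{not} a complete intersection in the polynomial ring $S_{\P^N}$, so the $I_n$-ideal machinery does not apply, and the heuristic ``$I_n^{\mathrm{prod}}\supseteq I_n^X\cdot I_1^Y$'' has no precise meaning in this setting. (In the complete-intersection case of Proposition~\ref{prop:fiber product not quasi-F-split} the paper does use the mod-$p$ Leibniz rule $\Delta_1(gh)\equiv g^p\Delta_1(h)+\Delta_1(g)h^p$, which is in the spirit of what you suggest; but that case assumes complete intersections, and the present proposition does not.)

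What \cite[Theorem~4.8]{KTY1} actually requires --- and what the paper's proof supplies and verifies --- is an explicit choice of elements $f_1,\ldots,f_n$ in $R_{\P^N}\otimes R_{\P^N}$. From the hypothesis $\sht(R_X)=n$ one gets elements $g_1,\ldots,g_n\in R_{\P^N}$ witnessing this for $I=\ker(R_{\P^N}\to R_X)$, and from $\sht(R_Y)=1$ one gets $h\in (J^{[p^n]}:J)$ with $u^{n-1}(F^{n-1}_*h)\notin\m^{[p]}$. The paper sets
\[
f_i:=g_i\otimes u^{\,n-i}(F^{\,n-i}_*h),
\]
and then checks, separately for generators $x\otimes 1$ (with $x\in I$) and $1\otimes y$ (with $y\in J$), that the defining sums $\sum_r\widetilde{u}^r(F^r_*f_{r+s}\Delta_r(\cdot))$ land in $(I+J)^{[p^s]}$, using that $\Delta_r$ applied to a tensor of the form $x\otimes 1$ or $1\otimes y$ stays pure, that $\widetilde{u}=u\otimes u$, and that $u(F_*g_i)=0$ for $i\geq 2$. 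This construction is the crux of the proof; your proposal flags the obstacle (``the $\Delta_1$-type corrections \dots need careful bookkeeping across both factors'') but does not resolve it, and the ideal-based shortcut you substitute is not available here because the rings are not complete intersections. To close the gap you would need to state the element-wise conditions of \cite[Theorem~4.8]{KTY1} and exhibit the $f_i$ as above.
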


\begin{proof}
Since $H^0(X,\sO_X)\cong H^0(Y,\sO_Y)\cong k$, we may embed $X$ and $Y$ as projectively normal subvarieties of some $\P^N_k$.
Let $S_X$ and $S_Y$ denote the section rings of $X$ and $Y$, respectively.
By K\"unneth's formula, the section ring of $X \times Y$ inside $\P^N \times \P^N$ coincides with the coordinate ring $S_X \otimes_k S_Y$.
Thus, by Proposition \ref{wprop:multigraded algebra and proj}, it suffices to show that the quasi-$F$-split height of $S_X \otimes_k S_Y$ is equal to $n$.

Let $S_{\P^N}:=k[x_0,\ldots,x_N]$ be a polynomial ring, and let $R_{\P^N}:=S_{\P^N, (x_1,\ldots,x_N)}$ be the localization.
Let $R_X$ and $R_Y$ denote the localizations at the origin of $S_X$ and $S_Y$, respectively.
Then $\sht(R_X)=n$, $\sht(R_Y)=1$, and by Proposition \ref{prop:multigraded ring versus local ring} we have
\[
\sht(R_X \otimes R_Y)=\sht(S_X \otimes S_Y).
\]
Since $R_X$ and $R_Y$ are quotients of $R_{\P^N}$, we may write $R_X \cong R_{\P^N}/I$ and $R_Y \cong R_{\P^N}/J$ for some ideals $I,J \subseteq R_{\P^N}$.

As in Convention \ref{conv:graded ring}, we define $u$ for the ring $S_{\P^N}$.
We also define $\widetilde{u}$ for $S_{\P^N} \otimes S_{\P^N} \simeq k[x_0, \ldots, x_{2N+1}]$ in a similar way.
By \cite[Theorem~4.8]{KTY1}, there exist $g_1,\ldots,g_n \in R_{\P^N}$ satisfying the required conditions for $I$.
Since $\sht(R_{\P^N}/J)=1$, there exists $h \in (J^{[p^n]}:J)$ such that $u^{n-1}(F^{n-1}_*h) \notin \m^{[p]}$.
Define $f_1,\ldots,f_n \in R_{\P^N} \otimes R_{\P^N}$ by
\[
f_i := g_i \otimes u^{n-i}(F^{n-i}_*h).
\]
We now verify that $f_1,\ldots,f_n$ satisfy the conditions of \cite[Theorem~4.8]{KTY1}.
Since $u(F_*g_i)=0$, it follows that $\widetilde{u}(F_*f_i)=0$ for all $i \geq 2$.
Moreover, because $g_1,\,u^{n-1}(F^{n-1}_*h) \notin \m^{[p]}$, we have $f_1 \notin \n^{[p]}$, where $\n$ denotes the maximal ideal of $R_{\P^N} \otimes R_{\P^N}$.

Next, fix $1 \leq s \leq n$.
For $x \in I$, we have
\[
\sum_{r=0}^{n-s} \widetilde{u}^r(F^r_*f_{r+s}\Delta_r(x\otimes 1))
= \sum_{r=0}^{n-s} u^r(F^r_*g_{r+s}\Delta_r(x)) \otimes u^{n-s}(F^{n-s}_*h) \in I^{[p^s]} \otimes R_{\P^N}. 
\]
Similarly, for $y \in J$ we obtain
\begin{align*}
\sum_{r=0}^{n-s} \widetilde{u}^r(F^r_*f_{r+s}\Delta_r(1\otimes y))
&= \sum_{r=0}^{n-s} u^r(F^r_*g_{r+s}) \otimes u^r\big(F^r_*u^{n-r-s}(F^{n-r-s}_*h)\Delta_r(y)\big) \\
&= g_s \otimes u^{n-s}(F^{n-s}_*hy^{p^{\,n-s}}) \in R_{\P^N} \otimes J^{[p^s]}.
\end{align*}
Therefore, $f_1,\ldots,f_n$ satisfy the conditions of \cite[Theorem~4.8]{KTY1}.
We conclude that the quasi-$F$-split height of $R_X \otimes R_Y$ is equal to $n$.
\end{proof}

\begin{prop}\label{prop:fiber product not quasi-F-split}
Let $X \hookrightarrow \P_k^n$ and $Y \hookrightarrow \P_k^m$ be complete intersections in projective spaces over a field $k$ with $H^0(X,\sO_X)\cong H^0(Y,\sO_Y)\cong k$.
If the quasi-$F$-split heights of $X$ and $Y$ are greater than $1$, then $X \times_k Y$ is not quasi-$F$-split.
\end{prop}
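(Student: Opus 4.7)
The plan is to reduce the question to the ideal–iteration criterion of Theorem~\ref{thm:Fedder's criterion} for the localized coordinate ring of $X\times_k Y$ in $\mathbb{P}^n_k\times\mathbb{P}^m_k$, and then to exploit the separation of $x$- and $y$-variables to show that the whole iteration stays inside $\m^{[p]}$.

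Write $X \hookrightarrow \mathbb{P}^n_k$ and $Y \hookrightarrow \mathbb{P}^m_k$ as complete intersections in $S_X:=k[x_0,\ldots,x_n]$ and $S_Y:=k[y_0,\ldots,y_m]$, cut out by regular sequences $f'_1,\ldots,f'_l$ and $g'_1,\ldots,g'_{l'}$, and set $f:=f'_1\cdots f'_l$, $g:=g'_1\cdots g'_{l'}$. By Remark~\ref{rmk:projectively normality}(2), the section ring of $X\times_k Y$ equals the coordinate ring $S/(f'_1,\ldots,g'_{l'})$ with $S:=S_X\otimes_k S_Y$. By Propositions~\ref{wprop:multigraded algebra and proj} and~\ref{prop:multigraded ring versus local ring}, it suffices to prove that the localization at the irrelevant maximal ideal $\m:=(x_0,\ldots,x_n,y_0,\ldots,y_m)$ is not quasi-$F$-split, which by Theorem~\ref{thm:Fedder's criterion} amounts to showing that every $I_n$ in the iteration attached to the combined regular sequence and to $fg$ is contained in $\m^{[p]}$.

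The key input is the identity
\[
\Delta_1((fg)^{p-1})=\Delta_1(f^{p-1})\cdot g^{p(p-1)}+f^{p(p-1)}\cdot \Delta_1(g^{p-1}).
\]
This follows from Witt-vector arithmetic in $W_2(S)$: because $f^{p-1}$ and $g^{p-1}$ involve disjoint sets of variables, the products $m_in_j$ of the monomials in the decompositions $f^{p-1}=\sum m_i$, $g^{p-1}=\sum n_j$ are pairwise distinct, so $\{m_in_j\}$ is the canonical decomposition of $(fg)^{p-1}$. Expanding $[f^{p-1}][g^{p-1}]=[(fg)^{p-1}]$ via $[f^{p-1}]=\sum[m_i]+(0,\Delta_1(f^{p-1}))$, $[g^{p-1}]=\sum[n_j]+(0,\Delta_1(g^{p-1}))$, together with $(a,0)(0,b)=(0,a^p b)$ and $(0,a)(0,b)=0$ in $W_2$, yields the formula. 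Now since $\sht(X),\sht(Y)>1$, Fedder's criterion gives $f^{p-1}\in \m_{S_X}^{[p]}$ and $g^{p-1}\in \m_{S_Y}^{[p]}$, hence $f^{p(p-1)},g^{p(p-1)}\in \m^{[p^2]}$ and therefore $\Delta_1((fg)^{p-1})\in \m^{[p^2]}$.

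Consequently, for every $a\in S$ the product $\Delta_1((fg)^{p-1})\cdot a$ lies in $\m^{[p^2]}$; writing it in the $S$-basis $\{F_*(x^Iy^J)\}_{0\le I,J\le p-1}$, each coefficient lies in $\m^{[p]}$, so $\theta(F_*a)=u(F_*(\Delta_1((fg)^{p-1})a))\in \m^{[p]}$. Combined with $I_1\subseteq \m^{[p]}$ (immediate: $(fg)^{p-1}=f^{p-1}g^{p-1}\in \m^{[p]}$ and $(f'_i)^p,(g'_j)^p\in \m^{[p]}$), a direct induction using $I_{n+1}=\theta(F_*I_n\cap \Ker(u))+I_1$ shows $I_n\subseteq \m^{[p]}$ for all $n$, so $\sht(X\times_k Y)=\infty$. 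The main obstacle is the rigorous derivation of the $\Delta_1$ identity: $\Delta_1$ is defined via the distinct-monomial decomposition, and the crucial point is that the $x$-$y$ variable separation prevents any monomial collapse in the product $(fg)^{p-1}$, so that the $W_2$-expansion requires no extra correction terms.
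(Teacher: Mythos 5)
Your proof is correct and follows essentially the same route as the paper: both reduce to the local/graded Fedder-type criterion, prove the same key identity $\Delta_1((fg)^{p-1})=g^{p(p-1)}\Delta_1(f^{p-1})+f^{p(p-1)}\Delta_1(g^{p-1})$ using the variable separation, and conclude that every $I_n$ stays inside $\m^{[p]}$. The only cosmetic difference is that the paper cites Corollary~\ref{cor:non-q-split criterion} for the final step, whereas you unwind the induction on $I_n$ directly from Theorem~\ref{thm:Fedder's criterion} and additionally spell out the $W_2$-arithmetic behind the $\Delta_1$ identity, which the paper leaves implicit.
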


\begin{proof}
Let $S_X$ and $S_Y$ be the section rings of $X$ and $Y$, respectively.
Then $S_X \otimes_k S_Y$ is the section ring of $X \times_k Y$ in $\P^n \times_k \P^m$.
Note that $\sht(S_X), \sht(S_Y) \geq 2$.
Let $g_1,\ldots,g_s$ and $h_1,\ldots,h_t$ be homogeneous regular sequences defining $X$ and $Y$, respectively, and set $g:=g_1 \cdots g_s$ and $h:=h_1 \cdots h_t$.
Since $X$ and $Y$ are not $F$-split, we have 
\[
g^{p-1} \in \m_{S_X}^{[p]} \quad\text{and}\quad h^{p-1} \in \m_{S_Y}^{[p]}.
\]
Therefore,
\[
\Delta_1(g^{p-1}h^{p-1})
= g^{p(p-1)}\Delta_1(h^{p-1}) + h^{p(p-1)}\Delta_1(g^{p-1})
\in \m^{[p^2]},
\]
where $\m:=\m_{S_X}\otimes 1 + 1 \otimes \m_{S_Y}$.
By Corollary \ref{cor:non-q-split criterion}, it follows that $X \times Y$ is not quasi-$F$-split.
\end{proof}

\subsection{Smoothness of genus one fibrations}
In this subsection, we study a fibration whose relative canonical divisor is trivial.
For such a fibration, we prove that if the generic fiber is quasi-$F$-split and 
isomorphic to a complete intersection in some projective space over $K(Y)$, then a general fiber is also quasi-$F$-split.
Note that this fails in general without the assumption on the relative canonical divisor (see Example \ref{eg:wild conic bundle}).
As a corollary, we prove that quasi-$F$-split surfaces have no quasi-elliptic fibration.

\begin{lem}\label{lem:very ample descent}
Let $k \subseteq K$ be a field extension.
Let $X$ be a proper scheme over $k$ such that $H^0(X,\sO_X)=k$.
Let $X_K:=X \times_k K$ and $\pi\colon X_K \to X$ the natural projection.
Let $L$ be an invertible sheaf on $X$.
If $L_K:=\pi^*L$ is very ample and $X_K$ is projectively normal with respect to the embedding given by $L_K$, then $L$ is very ample and $X$ is projectively normal with respect to the closed immersion defined by $L$.
\end{lem}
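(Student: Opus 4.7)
The plan is to exploit faithfully flat descent along $\pi\colon X_K\to X$ together with flat base change for the cohomology of coherent sheaves on the proper $k$-scheme $X$. First, I would record the canonical isomorphisms $H^0(X_K,L_K^{\otimes n})\cong H^0(X,L^{\otimes n})\otimes_k K$ for every $n\geq 0$, obtained by flat base change; in particular, each $H^0(X,L^{\otimes n})$ is a finite-dimensional $k$-vector space, and the natural multiplication maps
\[
\sigma_n\colon \Sym^n_k H^0(X,L)\longrightarrow H^0(X,L^{\otimes n})
\]
base-change over $k\to K$ to the analogous maps for the pair $(X_K,L_K)$. Next, I would descend global generation: the cokernel of the evaluation map $H^0(X,L)\otimes_k\sO_X\to L$ is coherent, and pulling it back along the flat morphism $\pi$ yields the cokernel of the corresponding evaluation map for $L_K$, which vanishes because $L_K$ is very ample. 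Faithful flatness of $\pi$ then forces the original cokernel to vanish, so $L$ is globally generated.

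Second, I would interpret the resulting morphism. Setting $V:=H^0(X,L)$, the global sections define a $k$-morphism $\varphi\colon X\to \mathbb{P}^N_k:=\mathbb{P}(V^\vee)$, whose base change $\varphi_K$ is precisely the morphism attached to $L_K$ and the complete linear system $V\otimes_k K\cong H^0(X_K,L_K)$. By hypothesis, $\varphi_K$ is a closed immersion. The key technical point is then to invoke faithfully flat descent of the property \emph{being a closed immersion} along the faithfully flat quasi-compact morphism $\mathbb{P}^N_K\to \mathbb{P}^N_k$ (e.g.\ via Stacks Project, tag 02L6) to conclude that $\varphi$ itself is a closed immersion. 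Since $\varphi$ is defined by the complete linear system of $L$, this shows that $L$ is very ample.

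Finally, for projective normality I would argue that surjectivity of $\sigma_n\otimes_k K\colon \Sym^n_K(V\otimes_k K)\to H^0(X_K,L_K^{\otimes n})$—which is exactly projective normality of $X_K\hookrightarrow \mathbb{P}^N_K$—descends to surjectivity of $\sigma_n$ because $k\to K$ is faithfully flat. Hence $X$ is projectively normal with respect to the embedding defined by $L$. The hypothesis $H^0(X,\sO_X)=k$ enters only to guarantee that the complete linear system of $L_K$ is defined over $k$ in a well-behaved way (so that $\mathbb{P}(V^\vee)_K = \mathbb{P}((V\otimes_k K)^\vee)$ is the target of $\varphi_K$). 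The main obstacle is the descent step for \emph{closed immersion} in the second paragraph; the rest amounts to routine bookkeeping with flat base change and the elementary descent of surjectivity for $k$-linear maps.
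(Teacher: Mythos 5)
Your argument is correct and rests on the same core mechanism as the paper's: faithfully flat descent along $k \subseteq K$, fed by the flat base change isomorphisms $H^0(X_K,L_K^n)\cong H^0(X,L^n)\otimes_k K$. The organization differs slightly. You descend each geometric property of the morphism $\varphi$ separately (global generation, then the closed immersion property via fpqc descent, then surjectivity of $\Sym^n V\to H^0(X,L^n)$), whereas the paper works purely with section rings: it shows the graded map $k[x_1,\ldots,x_N]\to B=\bigoplus_n H^0(X,L^n)$ is surjective by pulling the surjection $K[x_1,\ldots,x_N]\twoheadrightarrow B_K$ back through the Cartesian square, and then asserts in one step that this surjection yields a projectively normal closed immersion $X\hookrightarrow\P^N_k$. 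That last step in the paper implicitly uses that $X\cong\Proj B$ (which needs $L$ ample, itself a descent fact), so your version makes explicit some points the paper compresses. Neither route is substantially shorter or more general than the other; your version is perhaps a bit more self-contained since it isolates exactly which descent lemmas are being invoked, while the paper's is more compact because it packages everything into the section-ring surjectivity.
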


\begin{proof}
We define a graded ring $B$ by
\[
B:=\bigoplus_{n \geq 0} H^0(X,L^n).
\]
Then the base change $B_K := B \otimes_k K$ is 
\[
B_K \cong \bigoplus_{n \geq 0} H^0(X_K,L_K^n).
\]
By the assumption on $X_K$, we have a surjective graded homomorphism
\[
K[x_1,\ldots,x_N] \to B_K.
\]
In particular, the dimension of $H^0(X_K,L_K)$ over $K$ is $N$, and thus the dimension of $H^0(X,L)$ over $k$ is $N$.
Replacing with a suitable basis,
we may assume that we have the commutative diagram
\[
\xymatrix{
k[x_1,\ldots,x_N] \ar[r]^-{} \ar[d] \ar@{}[rd]|{\circlearrowright} & K[x_1,\ldots,x_N] \ar@{->>}[d]^-{} \\
B \ar[r]_-{}  & B_K.
}
\]
Furthermore, since the diagram is Cartesian and $k \to K$ is faithfully flat, the left vertical map is also surjective.
Therefore, it defines a closed immersion $X \hookrightarrow \P^N_k$,
with $X$ projectively normal for this embedding.
\end{proof}

\begin{cor}\label{cor:base change, Calabi-Yau variety}
Let $k \subseteq K$ be a field extension.
Let $X$ be a proper scheme over $k$ such that $H^0(X,\sO_X)=k$ and $\dim X \geq 1$.
Let $X_K:=X \times_k K$.
Suppose that $X_K\subseteq\P^N_K$ is a complete intersection.
In addition, we assume that $\sO_{X_K}(1)$ descends to $X$ and $\omega_{X_K} \cong \sO_{X_K}$.
Then we have $\sht(X)=\sht(X_K)$.
\end{cor}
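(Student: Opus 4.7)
The plan is to first descend the projectively normal complete intersection structure of $X_K$ to $X$, and then apply the Calabi--Yau Fedder-type criterion of Theorem \ref{thm:Fedder's criterion for projective varieties}(2) to both $X$ and $X_K$, comparing the two sides via faithful flatness.

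First, I would invoke Lemma \ref{lem:very ample descent} applied to the very ample line bundle $\sO_{X_K}(1)$, which by hypothesis descends to an invertible sheaf $L$ on $X$. This produces a closed immersion $X\hookrightarrow\P^N_k$ with respect to which $X$ is projectively normal, and which recovers the embedding $X_K\hookrightarrow\P^N_K$ after base change. Writing $S:=k[x_0,\ldots,x_N]$ and $S_K:=S\otimes_k K$, let $I\subseteq S$ (resp.\ $I_K\subseteq S_K$) denote the homogeneous ideal of $X$ (resp.\ $X_K$). Projective normality together with faithful flatness give $I\otimes_k K=I_K$ and identifications of section rings with coordinate rings on both sides.

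Next, I would argue that $I$ itself is generated by a homogeneous regular sequence of length $l$. Both the height of a homogeneous ideal and the minimal number of generators (computed as graded dimensions of $I/\m I$) are preserved under faithfully flat field extensions, so $\mathrm{ht}(I)=\mathrm{ht}(I_K)=l$ and $\mu(I)=\mu(I_K)=l$. Since $S$ is Cohen--Macaulay, an ideal satisfying $\mu(I)=\mathrm{ht}(I)$ is a complete intersection, so any minimal generating set is a regular sequence. Hence $I=(g_1,\ldots,g_l)$ for some homogeneous regular sequence $(g_i)$ in $S$ whose base change $(g_i\otimes 1)$ is a regular sequence generating $I_K$. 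Adjunction for complete intersections in $\P^N_K$, combined with the hypothesis $\omega_{X_K}\cong\sO_{X_K}$, forces $\sum_i\deg(g_i)=N+1$.

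Finally, I would apply Theorem \ref{thm:Fedder's criterion for projective varieties}(2) to $X$ using $g_1,\ldots,g_l$ and to $X_K$ using $g_1\otimes 1,\ldots,g_l\otimes 1$. Setting $f:=g_1\cdots g_l\in S$, this criterion identifies $\sht(X)$ with $\inf\{n\mid f_n\notin\m^{[p^n]}\}$, where $f_n$ is built inductively from $f^{p-1}$ and the operator $\Delta_1$, and the analogous statement holds for $X_K$ with $f\otimes 1$ and $\m_K=\m\otimes_k K$. The operator $\Delta_1$ commutes with the extension $k\to K$, since the monomial decomposition of $f^{p-1}$ remains the monomial decomposition of $f^{p-1}\otimes 1$, with coefficients only pushed forward along $k\hookrightarrow K$. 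Consequently, $f_n\otimes 1$ equals the sequence associated to $X_K$, and faithful flatness of $S\to S_K$ yields $f_n\in\m^{[p^n]}$ if and only if $f_n\otimes 1\in\m_K^{[p^n]}$. Taking infima gives $\sht(X)=\sht(X_K)$.

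The principal obstacle is the descent of the complete intersection structure in the second step: one must simultaneously verify that a minimal generating set of $I$ has length exactly $l$ and that these generators indeed form a regular sequence, both of which rest on the preservation of height, $\mu$, and the Cohen--Macaulay property under faithfully flat field extension. Once this descent is established, the remainder of the argument is formal, since the combinatorial data entering the Fedder-type criterion---namely $\Delta_1$, the Frobenius power $\m^{[p^n]}$, and membership therein---are all stable under extension of scalars.
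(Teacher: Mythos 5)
Your overall strategy—descend the complete intersection structure via Lemma~\ref{lem:very ample descent}, identify the homogeneous ideals $I\otimes_k K = I_K$, and then compare the two sides—closely parallels the paper's argument. Your elaboration of the step establishing that $I$ itself is generated by a homogeneous regular sequence (via preservation of height, $\mu$, and the Cohen--Macaulay property under faithfully flat extension) is correct and, if anything, more explicit than the paper's terse ``in particular.''

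The gap is in the final step. You invoke Theorem~\ref{thm:Fedder's criterion for projective varieties}(2) simultaneously over $k$ and over $K$, but that theorem is stated only for a \emph{perfect} base field, and neither $k$ nor $K$ is assumed perfect here. The corollary is genuinely used for non-perfect fields—the application in Example~\ref{eg:wild conic bundle} takes $K = k(S,T)$—so this is not a cosmetic restriction. The underlying criterion \cite[Theorem~5.8]{KTY1} is formulated in Convention~\ref{conv:example}, which assumes $k$ perfect; over an $F$-finite but imperfect field, the dual basis element $u$ in Convention~\ref{conv:graded ring} depends on a chosen $p$-basis of $k$, and the statement that $\sht$ is computed by $\inf\{n\mid f_n\notin\m^{[p^n]}\}$ requires a separate argument. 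The paper avoids this precisely by citing the dedicated base-change result \cite[Theorem~5.13]{KTY1} for section rings of Calabi--Yau complete intersections, which is designed to handle the $F$-finite (not necessarily perfect) case; one cannot reach $\sht(B)=\sht(B_K)$ by running the perfect-field criterion on both sides and comparing, and passing to perfect closures is not available either, since Corollary~\ref{cor:base change} covers only \emph{separable} algebraic extensions, not the purely inseparable extension $k\subseteq k^{1/p^\infty}$. To repair your argument you would need to first extend the Calabi--Yau Fedder-type formula to $F$-finite base fields, which is essentially the content of \cite[Theorem~5.13]{KTY1} and is not proved in your proposal.

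Once that issue is set aside, the remaining steps—compatibility of $\Delta_1$ with $k\to K$ because it is defined termwise from the monomial decomposition, the faithful flatness argument showing $f_n\in\m^{[p^n]}$ iff $f_n\otimes 1\in\m_K^{[p^n]}$, and the adjunction computation giving $\sum\deg(g_i)=N+1$ (the paper writes $N$, which appears to be a typo since the polynomial ring is $k[x_0,\ldots,x_N]$)—are all sound.
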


\begin{proof}
By assumption, there exists a line bundle $L$ on $X$ such that $\pi^*L \cong \sO_{X_K}(1)$.
We denote $L$ by $\sO_X(1)$.
We consider the section rings
\[
B:=\bigoplus_{n}H^0(X,\sO_X(n)) \to \bigoplus_{n} H^0(X_{K},\sO_{X_K}(n))=:B_K.
\]
We note that $B_K=B \otimes_k K$.
By the proof of Lemma \ref{lem:very ample descent}, we have the Cartesian diagram
\[
\xymatrix{
k[x_0,\ldots,x_N] \ar[r]^-{} \ar@{->>}[d] \ar@{}[rd]|{\circlearrowright} & K[x_0,\ldots,x_N] \ar@{->>}[d]^-{} \\
B \ar[r]_-{}  & B_K.
}
\]
Since $\dim X_K \geq 1$ and $X_K$ is a complete intersection, the kernel of the right vertical map is generated by a regular homogeneous sequence.
In particular, the kernel of the left vertical map is generated by a homogeneous regular sequence $f'_1,\ldots,f'_m$.
Furthermore, since $\omega_{X_K}$ is trivial, the degree of $f:=f'_1 \cdots f'_m$ is $N$.
Therefore, by \cite[Theorem~5.13]{KTY1}, we have $\sht(B)=\sht(B_K)$.
Since $X$ and $X_K$ are projectively normal, we have $\sht(X)=\sht(X_K)$.
\end{proof}

\begin{rmk}
Corollary \ref{cor:base change, Calabi-Yau variety} does not hold for non-Calabi--Yau cases as in Example \ref{eg:wild conic bundle}.
\end{rmk}

\begin{cor}\label{cor:fiber space}
Let $k$ be an algebraically closed field of characteristic $p>0$.
Let $\pi \colon X \to Y$ be a morphism satisfying $\pi_{*}\sO_X=\sO_Y$.
We assume that the generic fiber has a trivial canonical divisor and is a complete intersection in a projective space over $K(Y)$.
Then quasi-$F$-split heights of a general fiber, the generic fiber, and the geometric generic fiber coincide.
In particular, if $X$ is quasi-$F$-split, then so is a general fiber.
\end{cor}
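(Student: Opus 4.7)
The plan is to establish two equalities: $\sht(X_\eta) = \sht(X_{\bar\eta})$, and $\sht(X_y) = \sht(X_{\bar\eta})$ for $y$ in a dense open subset of $Y$. For the first, flat base change applied to $\pi_*\sO_X = \sO_Y$ yields $H^0(X_\eta, \sO_{X_\eta}) = K(Y)$; the polarization $\sO_{X_\eta}(1)$ induced by the given complete-intersection embedding is already defined on $X_\eta$; and $\omega_{X_\eta} \cong \sO_{X_\eta}$ by hypothesis. Applying Corollary~\ref{cor:base change, Calabi-Yau variety} to the extension $K(Y) \subseteq \overline{K(Y)}$ then gives $\sht(X_\eta) = \sht(X_{\bar\eta})$, and in particular reduces the task to comparing closed fibres with the geometric generic fibre, which sits over the perfect field $\overline{K(Y)}$.

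For the second equality I would spread the generic-fibre data out over the base. After shrinking $Y$ to a dense open affine, one may assume that $\pi$ is flat and projective, that $X \hookrightarrow \P^N_Y$ is a closed embedding extending the generic embedding (using a relatively very ample extension of $\sO_{X_\eta}(1)$), and that its ideal sheaf is generated by a homogeneous regular sequence $F_1,\ldots,F_m \in \sO_Y(Y)[x_0,\ldots,x_N]$ with $\sum_i \deg F_i = N+1$ (the Calabi--Yau condition, coming from $\omega_{X_\eta} \cong \sO_{X_\eta}$ via adjunction). After a further shrinking, every closed fibre $X_y$ is a complete intersection of positive dimension in $\P^N_k$ defined by the specializations $F_{i,y}$, and by Remark~\ref{rmk:projectively normality} its section ring coincides with its coordinate ring; the same is true of $X_{\bar\eta}$ over $\overline{K(Y)}$.

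Next, I would apply Theorem~\ref{thm:Fedder's criterion for projective varieties}(2) uniformly across the family. Set $F := F_1 \cdots F_m \in \sO_Y(Y)[x_0,\ldots,x_N]$ and
\[
f_1 := F^{p-1}, \qquad f_n := F^{p-1}\,\Delta_1(F^{p-1})^{1+p+\cdots+p^{n-2}} \quad (n \geq 2).
\]
Because $\Delta_1$ is defined through the Witt-vector identity $(0,\Delta_1(a)) = (a,0) - \sum_i (M_i,0)$ in $W_2(-)$, it is functorial in the ambient $\F_p$-algebra, so the specialization of $f_n$ at any closed point $y \in Y$ is the Fedder polynomial computing $\sht(X_y)$, and its specialization at $\bar\eta$ is the Fedder polynomial computing $\sht(X_{\bar\eta})$. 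Writing $f_n = \sum_\alpha a_\alpha x^\alpha$ with $a_\alpha \in \sO_Y(Y)$, the containment $f_n \in \m^{[p^n]}$ amounts to the vanishing of the finitely many $a_\alpha$ with some $\alpha_i < p^n$, which is a closed condition on $Y$; its negation is open.

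Setting $n_0 := \sht(X_{\bar\eta})$, the closed conditions $f_i \in \m^{[p^i]}$ for $i < n_0$ hold at $\bar\eta$ and hence on all of $Y$ (since $\sO_Y(Y) \hookrightarrow \overline{K(Y)}$ is injective), while the open condition $f_{n_0} \notin \m^{[p^{n_0}]}$ holds on a dense open $U \subseteq Y$; thus $\sht(X_y) = n_0$ for every $y \in U$, proving the remaining equality. The "in particular" clause follows because any splitting $\phi \colon F_*W_n\sO_X \to \sO_X$ restricts along the pro-open immersion $X_\eta \hookrightarrow X$ to a splitting on $X_\eta$, so $X$ quasi-$F$-split implies $X_\eta$ is quasi-$F$-split and hence a general fibre is too. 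The main obstacle is the spreading-out step: one must carefully verify that the complete-intersection structure, the Calabi--Yau degree relation, and the equality "section ring $=$ coordinate ring" all survive on a dense open of $Y$, so that Theorem~\ref{thm:Fedder's criterion for projective varieties}(2) applies uniformly; once this is settled, the closed/open dichotomy for the polynomials $f_n$ reduces the conclusion to an elementary coefficient argument.
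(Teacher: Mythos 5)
Your proof follows essentially the same strategy as the paper's: spread the defining data out to a dense open affine of $Y$, form the Fedder polynomials $f_n$ from $F=F_1\cdots F_m$ using the $\Delta_1$-operator defined over the non-perfect coordinate ring $B=\sO_Y(Y)$, and observe that the condition $f_n\in\m^{[p^n]}$ specializes to a closed condition on the base. The overall scheme of the argument and the closed/open dichotomy match the paper.

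There are two places where your writeup glosses over points that the paper treats with more care, and the first one is a genuine gap as written. You assert that "$\Delta_1$ is functorial in the ambient $\F_p$-algebra" by appeal to the Witt-vector identity, and deduce directly that the specialization of $f_n$ at a geometric point $\bar s$ is literally the Fedder polynomial of the fiber. The paper does not claim this. The operator $\widetilde\Delta_1$ one builds from the $B$-monomial decomposition of $G^{p-1}$ and the operator $\Delta_1$ appearing in the Fedder criterion over the perfect field $\kappa(\bar s)$ do \emph{not} literally coincide after specialization; the paper only proves that they agree \emph{modulo the image of Frobenius} in $\kappa(\bar s)[x_0,\ldots,x_N]$. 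It then has to invoke the grading and the precise statement of \cite[Theorem 5.8 and Lemma 5.7]{KTY1} (the homogeneous Calabi--Yau Fedder criterion, where everything is controlled by the single coefficient of $(x_0\cdots x_N)^{p^n-1}$) to conclude that this ambiguity does not affect the computed height. This step is the crux of the spreading-out argument, and your proof needs to supply it: without the mod-Frobenius comparison and the degree argument, the claim "specialization of $f_n$ $=$ Fedder polynomial of the fiber" is unjustified. Relatedly, your remark that $f_n\in\m^{[p^n]}$ is the vanishing of "the finitely many $a_\alpha$ with some $\alpha_i<p^n$" is off: the condition is that $a_\alpha=0$ for all $\alpha$ with \emph{every} $\alpha_i<p^n$, and since $f_n$ is homogeneous of degree $(p^n-1)(N+1)$ there is in fact exactly one such $\alpha$, namely $(p^n-1,\ldots,p^n-1)$; this is precisely why \cite[Lemma 5.7]{KTY1} reduces the check to a single coefficient, and it is also what makes the mod-Frobenius discrepancy harmless.

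The second difference is genuinely a valid alternative route: you first compare $X_\eta$ with $X_{\bar\eta}$ via Corollary~\ref{cor:base change, Calabi-Yau variety}, whereas the paper compares the closed, generic, and geometric generic fibers uniformly using \cite[Theorem 5.13]{KTY1} applied to the loci $Y_{\geq h}=V(g_1,\ldots,g_{h-1})$. Both are fine, though your route needs the small caveat that Corollary~\ref{cor:base change, Calabi-Yau variety} requires $\dim X_\eta\ge 1$ (the zero-dimensional case being trivial). Finally, your argument for the "in particular" clause via restriction of a splitting along $X_\eta\hookrightarrow X$ is correct and matches the paper's appeal to stability of quasi-$F$-splitting under localization.
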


\begin{proof}
By shrinking $Y$, we may assume that $Y$ is an affine scheme $\Spec B$ and $X$ is a complete intersection in $\P^N_B$.
The defining equation of $X$ is denoted by $f_1,\ldots,f_m \in B[x_0,\ldots,x_N]$, which is a regular sequence.
After shrinking $\Spec B$ further, we may assume that $f_1(s),\ldots,f_m(s)$ is a regular sequence in $\kappa(s)[x_0,\ldots,x_N]$ for any point $s \in \Spec B$.
Let $G:=f_1\cdots f_m\in B[x_0,\ldots,x_N]$.
Since $\omega_X \cong \sO_X$, the degree of $G$ is $N+1$.
We take the monomial decomposition 
\[
G^{p-1}=\sum_{i=1}^{m} b_i M_i
\]
of $G^{p-1}$ over $B$,
where every $M_i$ is a monomial and $b_i \in B$.
Furthermore, we define
\[
\widetilde{\Delta}_1(G^{p-1}):=\sum_{\substack{0 \leq \alpha_1,\ldots,\alpha_m \leq p-1 \\ \alpha_1+\cdots+\alpha_m=p}} \frac{1}{p} \binom{p}{\alpha_1,\ldots,\alpha_m} (b_1M_1)^{\alpha_1} \ldots (b_mM_m)^{\alpha_m}.
\]
We take a geometric point $\var{s}$ of $\Spec B$, then
\[
G^{p-1}(\var{s})=\sum_{i=1}^m b_i(\var{s})M_i
\]
is a $p$-monomial decomposition since $b_i(\var{s})$ has a $p$-th root in $\kappa(\var{s})$.
Therefore, we obtain 
\[
\widetilde{\Delta}_1(G^{p-1})(\var{s}) \equiv \Delta_1(G(\var{s})^{p-1})\ \mod F(\kappa(\var{s})[x_0,\ldots,x_N]),
\]
where the right-hand side is defined as in Convention \ref{conv:graded ring}.
Furthermore, since both sides are homogeneous
of degree $p(p-1)(N+1)$, by \cite[Theorem~5.8]{KTY1}, if we define 
\[
G_n:=G^{p-1}\widetilde{\Delta}_1(G^{p-1})^{1+p+\cdots+p^{n-2}},
\]
then we have
\[
\sht(X_{\var{s}})=\mathrm{inf}\{n \mid G_n(\var{s}) \notin \m^{[p^n]}\},
\]
where $X_{\var{s}}$ is the geometric fiber of $X$.
Furthermore, by \cite[Lemma~5.7]{KTY1}, if we define $g_i \in B$ by the coefficient of $(x_0\cdots x_N)^{p^n-1}$ in $G_n$, then we have
\[
\sht(X_{\var{s}})=\mathrm{inf}\{n \mid G_n(\var{s}) \neq 0 \}.
\]
Therefore, if we put
\[
Y_{\geq h}:=V(g_1,\ldots,g_{h-1}),
\]
then for any geometric point $\var{s} \in Y$, $\sht(X_{\var{s}})\geq h$ if and only if $\var{s} \in Y_{\geq h}$.
Furthermore, by \cite[Theorem~5.13]{KTY1}, we also have that for a point $s \in Y$, $\sht(X_s) \geq h$ if and only if $s \in Y_{\geq h}$.

Next, assume that $Y_{\geq h}=Y$ for every $h \in \Z_{>0}$.
Then every fiber has quasi-$F$-split height $\infty$ by the above observation.
Therefore, we obtain the desired result.
Otherwise, let $h_0$ be the minimal integer with $Y_{\geq h_0} \neq Y$.
Then, if we define $Y_0:=Y \backslash Y_{\geq h_0}$, then it is non-empty open and for every $s \in Y_0$, $\sht(X_s)=\sht(X_{\var{s}})=h_0-1$.
Therefore, the quasi-$F$-split height of a general fiber, the generic fiber, and the geometric generic fiber are $h_0-1$.

Finally, we assume that $X$ is quasi-$F$-split.
By definition, quasi-$F$-splitting is preserved under localization.
Therefore, the generic fiber is also quasi-$F$-split, and in particular, a general fiber is also quasi-$F$-split.
\end{proof}

In the above corollary, we need the assumption that the generic fiber is a complete intersection for technical reasons.
Therefore, we conjecture the following statement.

\begin{conj}
Let $\pi \colon X \to Y$ be a morphism of normal varieties satisfying $\pi_{*}\sO_X=\sO_Y$.
If the relative canonical divisor is trivial and $X$ is quasi-$F$-split, then a general fiber is quasi-$F$-split.
\end{conj}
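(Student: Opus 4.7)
The plan is to extend Corollary \ref{cor:fiber space} by removing the complete-intersection hypothesis on the generic fiber. By shrinking $Y$ (which does not affect a general fiber), I would first reduce to the case where $Y = \Spec B$ is affine, $\pi$ is flat (by generic flatness), and $\omega_{X/Y} \cong \sO_X$. By Remark \ref{rmk:evaluation}, the goal becomes to show that the evaluation map $\Hom_{W_nX_s}(F_*W_n\sO_{X_s}, \sO_{X_s}) \to H^0(X_s,\sO_{X_s})$ is surjective for a general $s \in Y$, assuming the corresponding map is surjective on the total space $X$. The strategy, as in Corollary \ref{cor:fiber space}, is to obtain a constructible decomposition of $Y$ by quasi-$F$-split height of the fiber and then locate the generic value.

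The key ingredient I would try to establish is a base-change result for the sheaf of splittings, namely a relative version of $\Hom_{W_nX}(F_*W_n\sO_X,\sO_X)$. In the case $n=1$, Grothendieck duality identifies this with $F_*\omega_X^{1-p}$, and the triviality of $\omega_{X/Y}$ makes its pushforward a coherent sheaf whose formation commutes with base change on a nonempty open subset of $Y$ by standard cohomology-and-base-change. A similar picture for $n \geq 2$ should come from a Witt-theoretic Grothendieck duality for $W_n\pi \colon W_nX \to W_nY$ together with an appropriate relative Witt dualizing sheaf $W_n\omega_{X/Y}$ that is trivial under the hypothesis $\omega_{X/Y} \cong \sO_X$. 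If such a package can be established, then the same semicontinuity argument used in the proof of Corollary \ref{cor:fiber space} shows that the quasi-$F$-split height of a general fiber equals that of the generic fiber, and the conclusion follows.

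The hardest step is precisely this Witt-theoretic duality and base-change package, which does not appear in the literature at the required level of generality. Already $F_*W_n\sO_X$ does not commute with base change to a closed fiber of $\pi$; only a natural surjection onto $F_*W_n\sO_{X_s}$ exists in general, and the definition of a relative Witt canonical sheaf in the singular setting is delicate. A workaround avoiding duality altogether would be an \'etale-local reduction: \'etale-locally, $X$ embeds into a smooth $Y$-scheme as a relative complete intersection with trivial relative canonical bundle (by choosing local generators of the ideal sheaf), so Corollary \ref{cor:fiber space} applies in each \'etale chart, giving local quasi-$F$-splittings of a general fiber. The remaining challenge is to glue these local splittings into a global splitting of $F_*W_n\sO_{X_s} \to \sO_{X_s}$, which involves controlling a \v{C}ech-type obstruction for the Witt Hom sheaf. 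Either route ultimately rests on a form of flat base change for Witt-Frobenius Hom sheaves under the assumption $\omega_{X/Y} \cong \sO_X$, which I regard as the principal obstacle; by contrast, the semicontinuity step at the end is straightforward once base change is available.
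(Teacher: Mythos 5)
The statement you were asked to prove is not a theorem with a proof in this paper: it is explicitly posed as an open \emph{conjecture}. The authors prove the result only under the additional hypothesis that the generic fiber is a complete intersection in a projective space over $K(Y)$ (Corollary~\ref{cor:fiber space}), and they state plainly that the complete-intersection assumption is needed ``for technical reasons,'' leaving the general statement as a conjecture. So there is no paper proof to compare yours against, and a successful blind proof would be a genuine new contribution.

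Your proposal correctly identifies where the difficulty lies, but neither of your two proposed routes actually closes the gap; you concede as much yourself. On the duality route, the object you need is a relative base-change statement for $\mathcal{H}\mathit{om}_{W_nX}\bigl(F_*W_n\sO_X,\sO_X\bigr)$ along $\pi$, but as you note $F_*W_n\sO_X$ does not commute with base change to closed fibers (only a natural surjection $F_*W_n\sO_X\otimes\kappa(s)\twoheadrightarrow F_*W_n\sO_{X_s}$ exists), and no Witt-theoretic Grothendieck duality with a well-behaved relative Witt dualizing object is available at this generality for singular fibers. The ``workaround'' route has a more fundamental problem than a \v{C}ech obstruction: quasi-$F$-splitting of a projective fiber $X_s$ is a \emph{global} condition, namely surjectivity of the evaluation map
$\Hom_{W_nX_s}\!\bigl(F_*W_n\sO_{X_s},\sO_{X_s}\bigr)\to H^0(X_s,\sO_{X_s})$
as in Remark~\ref{rmk:evaluation}; local ($n$-quasi-$F$-)splittings on an \'etale cover do not glue to a global one, and there is no reason the relevant $H^1$ obstruction vanishes. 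Moreover, Corollary~\ref{cor:fiber space} itself relies on the Fedder-type machinery (Theorems~\ref{CY-fedder}, \ref{thm:Fedder's criterion for projective varieties}, and the constructibility in \cite[Lemma~5.6, 5.7, Theorem~5.13]{KTY1}) which is formulated specifically for (multi-)graded complete-intersection section rings; transplanting it chart by chart to a non--complete-intersection $X$ does not give a coherent global theory. In short, the decomposition of $Y$ by fiber height that makes the proof of Corollary~\ref{cor:fiber space} work is obtained from an explicit polynomial datum $G_n$ attached to the defining equations, and you have not produced an analogous datum in the absence of a complete-intersection presentation. Your analysis is a useful survey of what would be needed, but it does not constitute a proof, and the principal obstacle you flag is indeed open.
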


\begin{rmk}
This conjecture does not hold in general when the relative canonical divisor is anti-ample (see Example \ref{eg:wild conic bundle}).
\end{rmk}

\begin{thm}\label{thm:no quasi-elliptic fibration}
Let $k$ be an algebraically closed field of characteristic $p>0$.
Let $\pi \colon X \to Y$ be a morphism of normal varieties satisfying $\pi_{*}\sO_X=\sO_Y$.
Suppose that the relative canonical sheaf is trivial, that the relative dimension of $\pi$ is one, and that a general fiber of $\pi$ is reduced.
If $X$ is quasi-$F$-split, then a general fiber of $\pi$ is smooth.
\end{thm}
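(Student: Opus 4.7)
The plan is to argue the contrapositive: if a general fiber of $\pi$ is not smooth, then $X$ is not quasi-$F$-split. The strategy is to embed the generic fiber $X_\eta$ as a complete intersection in a projective space over $K(Y)$, use Corollary~\ref{cor:fiber space} to transfer quasi-$F$-splitting from $X$ down to the geometric generic fiber $X_{\bar\eta}$, and then derive a contradiction via Example~\ref{eg:cusp}.

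I would first set up $X_\eta$. From $\pi_{\ast}\sO_X=\sO_Y$, relative dimension one, and triviality of $\omega_{X/Y}$, the scheme $X_\eta$ is a proper geometrically connected $K(Y)$-curve of arithmetic genus one with trivial canonical sheaf; the hypothesis that a general fiber is reduced propagates to $X_\eta$ being geometrically reduced. Because $X$ is normal, its non-regular locus has codimension at least two and therefore cannot dominate $Y$ for dimension reasons, so $X_\eta$ is regular. If $X_\eta$ is smooth over $K(Y)$, openness of smoothness on $Y$ gives that a general fiber is smooth and we are done; hence we may assume $X_\eta$ is not smooth. By the classical theory of quasi-elliptic fibrations, which exist only in characteristic $2$ and $3$, $X_\eta$ then admits a cuspidal Weierstrass-type model and in particular embeds in $\mathbb{P}^{2}_{K(Y)}$ as a cubic hypersurface; so $X_\eta$ is a complete intersection in a projective space over $K(Y)$.

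Any quasi-$F$-splitting of $X$ restricts to one on the localisation $X_\eta$, so $X_\eta$ is quasi-$F$-split. Corollary~\ref{cor:fiber space} then gives that the quasi-$F$-split heights of a general fiber, $X_\eta$, and the geometric generic fiber $X_{\bar\eta}$ all coincide; in particular $X_{\bar\eta}$ is quasi-$F$-split. However, the reduced structure of $X_{\bar\eta}$ is projectively equivalent to $V(y^{2}z+x^{3})\subset\mathbb{P}^{2}_{\overline{K(Y)}}$, which by Example~\ref{eg:cusp} is not quasi-$F$-split, giving the desired contradiction.

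The main obstacle is the step of realising $X_\eta$ as a plane cubic, i.e., as a complete intersection in a projective space over $K(Y)$. A direct line-bundle approach (taking a $\pi$-ample line bundle on $X$ and passing to a power so that its restriction to $X_\eta$ has degree three) requires controlling the index of the genus-one curve $X_\eta$ over $K(Y)$, which a priori could exceed three. Invoking the classical classification of regular but non-smooth genus-one curves over imperfect fields---equivalently, the Weierstrass cubic model underlying quasi-elliptic fibrations---bypasses this cleanly, and would be the route I would take.
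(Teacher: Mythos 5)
Your overall strategy matches the paper's (localize to the generic fiber, pass to a separable/algebraic closure, and contradict Example~\ref{eg:cusp}), but you have a genuine gap exactly at the point you flag as ``the main obstacle.'' You assert that a regular, non-smooth, geometrically integral genus-one curve $X_\eta$ over $K(Y)$ ``admits a cuspidal Weierstrass-type model and in particular embeds in $\mathbb{P}^2_{K(Y)}$ as a cubic hypersurface,'' invoking ``the classical theory of quasi-elliptic fibrations.'' That theory, as developed by Bombieri--Mumford, concerns surfaces fibered over a \emph{curve}, whereas here $Y$ has arbitrary dimension; more importantly, the existence of a degree-$3$ line bundle on $X_\eta$ over $K(Y)$ itself is not automatic and is not what the paper proves.

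What the paper actually shows is weaker and more delicate. It first reduces to $p\in\{2,3\}$ (via the cited result of Patakfalvi--Waldron), then passes to the separable closure $K'$ of $K=K(Y)$ via Corollary~\ref{cor:base change}, and proves that $C' := X_\eta\times_K K'$ is a complete intersection in $\mathbb{P}^2_{K'}$ \emph{or} $\mathbb{P}^3_{K'}$ --- not necessarily a plane cubic. The CI structure is obtained by Riemann--Roch: one needs a closed point $P\in C'$ with $[\kappa(P):K']\le 4$, and this bound is extracted from the Jacobian number $\jac(\sO_{C',Q})=2p/(p-1)$ at the cusp point $Q$ (a result of Nagamachi--Takamatsu, with Tanaka's Theorem~11.8 as an alternative). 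Crucially, this degree bound uses that every finite extension of the separably closed field $K'$ is purely inseparable, hence of $p$-power degree, which forces $[\kappa(Q):K']\in\{1,2,4\}$ when $p=2$ and $\{1,3\}$ when $p=3$. Over $K$ itself, mixed separable/inseparable residue extensions could arise, and your claimed plane-cubic model over $K(Y)$ has no justification. Once the CI structure over $K'$ is in hand, the paper contradicts Example~\ref{eg:cusp} via the comparison of heights for CI Calabi--Yau curves under the field extension $K'\subset\overline{K'}$ (\cite[Theorem~5.13]{KTY1}); your invocation of Corollary~\ref{cor:fiber space} instead would require the CI structure already over $K(Y)$, which is the stronger, unproved statement. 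In short: the step you believe the classical theory ``bypasses cleanly'' is precisely the substantive content of the paper's proof, and you need to supply it.
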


\begin{proof}
By \cite[Corollary 1.8]{PW}, we may assume that $p=2$ or $3$.
We denote the function field of $Y$ by $K$ and the generic fiber by $C$.
Then $C$ is a regular one dimensional proper scheme over $K$ satisfying $\omega_C \cong \sO_C$ and $H^0(\sO_C)=K$.
We assume that $C$ is not smooth.
Then the base change of $C$ to $\var{K}$ is a rational curve with a cusp or a node, and 
since $C$ is regular, the singularity must be a cusp.
Since $X$ is quasi-$F$-split, so is $C$, and thus so is $C':=C \times_K K'$ by Corollary \ref{cor:base change}, where $K'$ is the separable closure of $K$.
By Example \ref{eg:cusp}, the base change of $C'$ to an algebraic closure is not quasi-$F$-split.
By \cite[Theorem~5.13]{KTY1}, it is enough to show that $C'$ is a complete intersection in a projective space.
By the computation in \cite[Section 2]{AKMMMP}, it is enough to show that there exists a line bundle $L$ on $C'$ such that $h^0(L)=3$ or $4$.
We take a closed point $P \in C'$.
Then, by the exact sequence
\[
0 \rightarrow \sO_{C'} \rightarrow \sO_{C'}(P) \rightarrow \kappa(P) \rightarrow 0,
\]
we have $\chi(\sO_{C'}(P))=\chi(\sO_{C'})+[\kappa(P) \colon K']$.
Since $\sO_{C'} \cong \omega_{C'}$, we have $\chi(\sO_{C'})=0$ and $\chi(\sO_{C'}(P))= [\kappa (P) \colon K']$.
Therefore, it is enough to show that there exists a closed point $P \in {C'}$ such that $[\kappa(P)\colon K']$ is $1$, $2$, $3$, or $4$. 
Let $Q \in C'$ be a non-smooth point of $C'$, which is unique since $C'$ is of genus one.
Then $R:=\sO_{C',Q}$ is a discrete valuation ring, geometrically integral and essentially of finite type over $K'$.
We denote the residue field of $R$ by $L$.
By \cite[Theorem 7.5]{NT},
we have
\[
\jac (R) = \frac{2p}{p-1},
\]
where $\jac (R)$ is the Jacobian number of $R$, defined by
\[
\dim_{K'} (R/\Fitt_{1}\Omega_{R/K'}^{1}) = [L:K'] \length_{R}(R/\Fitt_{1}\Omega_{R/K'}^{1}).
\]
If $p=3$, then
we have $\jac (R)=3$ and $[L:K'] = 1$ or $3$. 
On the other hand, if $p=2$, then
we have $\jac (R)=4$.
Therefore $[L:K'] \in \{1,2,4\}$, and the claim follows; see \cite[Theorem 11.8]{tanaka21} for another proof. 
\end{proof}

\begin{cor}\label{cor:genus one gibration surface}
A normal quasi-$F$-split surface over an algebraically closed field of positive characteristic has no quasi-elliptic fibration.
\end{cor}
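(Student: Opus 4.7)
The plan is to deduce Corollary \ref{cor:genus one gibration surface} as an immediate application of Theorem \ref{thm:no quasi-elliptic fibration}. Suppose for contradiction that a normal quasi-$F$-split surface $X$ admits a quasi-elliptic fibration $\pi\colon X \to Y$. By definition of a quasi-elliptic fibration, $\pi$ is a proper surjective morphism onto a normal curve with $\pi_{*}\sO_{X}=\sO_{Y}$ (after Stein factorization, if needed), of relative dimension one, and whose general fiber is an integral cuspidal rational curve of arithmetic genus one; in particular, the general fiber is reduced but not smooth.

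To match the literal hypothesis of Theorem \ref{thm:no quasi-elliptic fibration} on the relative canonical sheaf, I plan to shrink $Y$ to a suitable dense open subset $U$. Since the generic fiber $C$ over $K=K(Y)$ is a regular proper Gorenstein curve of arithmetic genus one, we have $\omega_{C}\cong \sO_{C}$. A trivializing section over the generic point spreads out to a nowhere-vanishing section of $\omega_{X/Y}$ on $\pi^{-1}(U)$ for some dense open $U\subseteq Y$, yielding $\omega_{\pi^{-1}(U)/U}\cong \sO_{\pi^{-1}(U)}$. Quasi-$F$-splitting passes to open subschemes (the splitting datum $\phi\colon F_{*}W_{n}\sO_{X}\to \sO_{X}$ restricts), so $\pi^{-1}(U)$ remains quasi-$F$-split, and the restricted morphism $\pi|_{\pi^{-1}(U)}\colon \pi^{-1}(U)\to U$ satisfies every hypothesis of Theorem \ref{thm:no quasi-elliptic fibration}.

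The theorem then forces a general fiber of $\pi$ to be smooth, contradicting the definition of a quasi-elliptic fibration. The only delicate point is the spreading-out step used to trivialize $\omega_{X/Y}$ after shrinking $Y$, which is a routine consequence of coherence together with the triviality on the generic fiber; it merits explicit mention only because the theorem's statement demands relative triviality rather than merely triviality on the generic fiber.
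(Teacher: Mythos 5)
Your proposal is correct and takes the same route as the paper, which simply cites Theorem~\ref{thm:no quasi-elliptic fibration}. You are right to flag the mismatch between the theorem's hypothesis (relative canonical sheaf trivial) and what a quasi-elliptic fibration gives you directly (triviality of $\omega_C$ on the generic fiber only), and your spreading-out step---lift a trivializing section of $\omega_C$ to a section over $\pi^{-1}(U)$, then shrink $U$ using properness of $\pi$ so the section is nowhere vanishing---together with the observations that quasi-$F$-splitting restricts to opens and that the notion of ``general fiber'' is insensitive to shrinking the base, cleanly closes that gap.
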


\begin{proof}
It follows from Theorem \ref{thm:no quasi-elliptic fibration}.
\end{proof}

\subsection{Existence of Calabi--Yau varieties of arbitrary height in any characteristic}
In this subsection, we prove the existence of a Calabi--Yau hypersurface of quasi-$F$-split height (Artin--Mazur height) $h$ over $\var{\F}_p$ for any positive integer $h$ and any prime number $p$.
For the proof, we stratify the family of hypersurfaces by their quasi-$F$-split heights.

\begin{conv}\label{conv:unboundedness}
In this subsection, $k$ is an algebraically closed field of characteristic $p>0$ and $S:=k[x_1,\ldots, x_N]$ is a polynomial ring with the standard graded structure, that is, $\deg(x_i)=1$ for all $i$.
We assume that $N \geq 3$.
Let $\mathcal{M}$ be the set of all monic monomials of degree $N$.
Let $M := \# \mathcal{M}$,
$\mathcal{M}:=\{m_i \mid 1 \leq i \leq M\}$, and
\[
G:=\sum_{1 \leq i \leq M} a_i m_i \in k[x_j,a_i \mid 1 \leq j \leq N,\ 1 \leq i \leq M],
\]
where $a_1,\ldots,a_{M}$ are variables.
Since $k[x_j,a_i]$ is a free $k[a_i]$-module whose basis is monic monomials with respect to $x_1,\ldots,x_N$, 
every $H\in k[x_j,a_i]$ admits an expansion $H=\sum_{i} h_i H_i$, where $H_i \in S$ are monic monomials and $h_i \in k[a_i]$.
Then we define $\widetilde{\Delta}_1(H)$ by
\[
\widetilde{\Delta}_1(H):=\sum_{\substack{0 \leq \alpha_1,\ldots,\alpha_m \leq p-1 \\ \alpha_1+\cdots+\alpha_m=p}} \frac{1}{p} \binom{p}{\alpha_1,\ldots,\alpha_m} (h_1H_1)^{\alpha_1} \ldots (h_mH_m)^{\alpha_m}.
\]
Let $\mathcal{X}:=V_+(G)\subseteq\P^{N-1}_{\P^{M-1}_k}$ be the zero locus of $G$.
Then it defines the family
\[
\pi \colon \mathcal{X} \to P:=\P^{M-1}_k.
\]
Since the Hilbert polynomial is constant on fibers, the morphism $\pi$ is flat.
\end{conv}

\begin{lem}\label{lem:closed condition}
Let $h$ be a positive integer.
There exists a closed subset $P_{\geq h}$ of $P$ such that for every closed point $s$ of $P$,
the height of the fiber $\mathcal{X}_s$ of $s$ is at least $h$ if and only if $s \in P_{\geq h}$.
Moreover, $P_{\ge h}$ is cut out by a single equation inside $P_{\ge h-1}$.
\end{lem}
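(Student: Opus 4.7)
The plan is to pull the Calabi--Yau Fedder criterion (Theorem~\ref{CY-fedder}) back to the universal family $G$ to produce explicit homogeneous polynomials $g_1, g_2, \ldots \in k[a_1, \ldots, a_M]$ whose successive vanishing loci on $P = \P^{M-1}_k$ cut out the subsets $P_{\geq h}$.

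For each closed point $s = (\bar a_1 : \cdots : \bar a_M) \in P$, the fiber $\mathcal{X}_s = V_+(f_s)$ with $f_s := G(s) = \sum_i \bar a_i m_i$ is a degree-$N$ hypersurface in $\P^{N-1}_k$ and hence has trivial canonical sheaf by adjunction. Combining Theorem~\ref{CY-fedder} with Proposition~\ref{wprop:multigraded algebra and proj} gives $\sht(\mathcal{X}_s) = \inf\{n \geq 1 \mid f_{s,n} \notin \m^{[p^n]}\}$, where $f_{s,1} := f_s^{p-1}$ and $f_{s,n} := f_s^{p-1}\Delta_1(f_s^{p-1})^{1+p+\cdots+p^{n-2}}$. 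Since $f_{s,n}$ is homogeneous of $x$-degree $N(p^n - 1)$, the only monomial of that degree with every exponent strictly less than $p^n$ is $(x_1 \cdots x_N)^{p^n - 1}$ (cf.\ \cite[Lemma~5.7]{KTY1}), so $f_{s,n} \notin \m^{[p^n]}$ is equivalent to the non-vanishing of the coefficient of $(x_1 \cdots x_N)^{p^n - 1}$ in $f_{s,n}$. Performing the same construction universally, I set $G_1 := G^{p-1}$ and $G_n := G^{p-1}\widetilde{\Delta}_1(G^{p-1})^{1 + p + \cdots + p^{n-2}}$ for $n \geq 2$, and let $g_n \in k[a_1, \ldots, a_M]$ be the coefficient of $(x_1 \cdots x_N)^{p^n - 1}$ in $G_n$ when $G_n$ is expanded as a polynomial in $x$ over $k[a]$. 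A direct degree count shows that $g_n$ is homogeneous in $a$ of degree $p^n - 1$, so it defines a homogeneous form on $P$. Granted the compatibility $G_n|_{a = \bar a} = f_{s,n}$ at every closed point (which yields $g_n(s) = [\text{coefficient of }(x_1\cdots x_N)^{p^n-1}\text{ in }f_{s,n}]$), I define
\[ P_{\geq h} := V_+(g_1, \ldots, g_{h-1}) \subseteq P, \]
which is the required closed subset; the moreover statement is immediate, as $P_{\geq h}$ is cut out in $P_{\geq h - 1} = V_+(g_1, \ldots, g_{h-2})$ by the single equation $g_{h-1} = 0$.

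The main technical step is the specialization identity $G_n|_{a = \bar a} = f_{s,n}$, which reduces to $\widetilde{\Delta}_1(G^{p-1})|_{a = \bar a} = \Delta_1(f_s^{p-1})$. Writing the canonical $k[a]$-expansion $G^{p-1} = \sum_j h_j H_j$ with $H_j$ monic $x$-monomials and $h_j \in k[a]$, specialization yields $f_s^{p-1} = \sum_j h_j(\bar a) H_j$, which is precisely the canonical $k$-monomial decomposition of $f_s^{p-1}$. Substituting into the explicit sums defining $\widetilde{\Delta}_1$ and $\Delta_1$ gives the identity term-by-term. In contrast with the proof of Corollary~\ref{cor:fiber space}, which worked over a general base ring $B$ and therefore needed a ``modulo image of Frobenius'' correction, here no such correction arises because $s$ is a $k$-rational point and $k$ is algebraically closed. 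Raising to the $(1 + p + \cdots + p^{n-2})$-th power and multiplying by $f_s^{p-1}$ then gives the desired identity for all $n$, completing the argument.
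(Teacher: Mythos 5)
Your proof is correct and follows essentially the same route as the paper: both define the universal polynomials $G_n$ via $\widetilde{\Delta}_1$, invoke the Calabi--Yau Fedder criterion together with Proposition~\ref{wprop:multigraded algebra and proj} (equivalently Theorem~\ref{thm:Fedder's criterion for projective varieties} plus Remark~\ref{rmk:projective normality remark}), reduce membership in $\m^{[p^n]}$ to vanishing of the coefficient of $(x_1\cdots x_N)^{p^n-1}$ by the degree count, and take $P_{\geq h}$ to be the common vanishing locus of the resulting degree-$(p^i-1)$ forms. The only difference is that you spell out the specialization identity $\widetilde{\Delta}_1(G^{p-1})|_{a=\bar a}=\Delta_1(f_s^{p-1})$, which the paper uses implicitly; your observation that no ``modulo Frobenius'' correction is needed here (unlike in Corollary~\ref{cor:fiber space}) because $k$ is algebraically closed is accurate.
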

\begin{proof}
We define a sequence of elements $\{G_h\}_h$ by $G_1:=G^{p-1}$ and 
\[
G_n:=G^{p-1}\widetilde{\Delta}_1(G^{p-1})^{1+p+\cdots+p^{n-2}}
\]
for $n \geq 2$.
We take a point $s \in P$ and denote the fiber of $G_n$ and $G$ over $s$ by $G_n(s)$ and $G(s)$, respectively. Then $\mathcal{X}_s$ is the zero locus of $G(s)$ in $\mathbb{P}^{N-1}_k$.
By Theorem \ref{thm:Fedder's criterion for projective varieties} and Remark \ref{rmk:projective normality remark}, the height of $\mathcal{X}_s$ is at least $h$ if and only if $G_i(s) \in (x_1,\ldots,x_N)^{[p^h]}$ for all $1 \leq i \leq h-1$.
The latter condition is equivalent to the vanishing of the coefficient of $(x_1\cdots x_N)^{p^i-1}$ in $G_i(s)$ by \cite[Lemma~5.6]{KTY1}.
The coefficient of $G_i$ with respect to $(x_1 \cdots x_N)^{p^i-1}$ is denoted by $b_i \in k[a_1,\ldots,a_M]$.
By construction, $b_i$ is homogeneous of degree $p^i-1$.
Then $P_{\geq h}:=V_{+}(b_1,\ldots,b_{h-1})$ is the desired closed subset.
\end{proof}

\begin{cor}\label{cor:height bound for Calabi--Yau case}
There exists a positive integer $h_N$ such that, for every quasi-$F$-split hypersurface $X$ of degree $N$ in $\mathbb{P}^{N-1}_k$, the quasi-$F$-split height of $X$ is at most $h_N$.
\end{cor}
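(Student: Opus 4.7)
The plan is to apply the Noetherian property to the descending chain of closed subsets $\{P_{\ge h}\}_h$ from Lemma~\ref{lem:closed condition}.

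First, note that by Convention~\ref{conv:unboundedness}, every hypersurface of degree $N$ in $\mathbb{P}^{N-1}_k$ arises (up to rescaling, which does not affect quasi-$F$-split height) as $\mathcal{X}_s$ for some closed point $s$ of $P = \mathbb{P}^{M-1}_k$, where $M = \#\mathcal{M}$. Thus it suffices to bound the quasi-$F$-split heights of the quasi-$F$-split fibers of $\pi \colon \mathcal{X} \to P$.

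By Lemma~\ref{lem:closed condition}, we have a descending chain
\[
P \supseteq P_{\ge 1} \supseteq P_{\ge 2} \supseteq P_{\ge 3} \supseteq \cdots
\]
of closed subsets of $P$, where for each closed point $s \in P$, the fiber $\mathcal{X}_s$ satisfies $\sht(\mathcal{X}_s) \ge h$ if and only if $s \in P_{\ge h}$. Since $P$ is a Noetherian topological space, this chain stabilizes: there exists $h_N \in \mathbb{Z}_{>0}$ such that
\[
P_{\ge h_N+1} = P_{\ge h_N+2} = \cdots =: P_{\infty}.
\]
I would take this $h_N$ as the desired bound.

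It remains to verify that $P_\infty$ contains exactly the points whose fibers fail to be quasi-$F$-split. For a closed point $s \in P_\infty$, we have $s \in P_{\ge h}$ for every $h \ge 1$, so $\sht(\mathcal{X}_s) \ge h$ for all $h$, forcing $\sht(\mathcal{X}_s) = \infty$. Conversely, if $s \notin P_\infty$, then $s \notin P_{\ge h_N+1}$, hence $\sht(\mathcal{X}_s) \le h_N < \infty$. This gives the uniform upper bound $h_N$ for all quasi-$F$-split hypersurfaces of degree $N$ in $\mathbb{P}^{N-1}_k$.

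There is essentially no obstacle; the argument is a clean application of the Noetherian property once the stratification of Lemma~\ref{lem:closed condition} is in place. The only thing to keep in mind is that $h_N$ is obtained non-constructively through Noetherian induction, so the proof does not give any explicit bound (even though Lemma~\ref{lem:closed condition} produces explicit defining equations $b_1, b_2, \ldots$ for the strata).
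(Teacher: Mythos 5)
Your proof is correct and takes essentially the same route as the paper: both arguments hinge on Lemma~\ref{lem:closed condition} together with the Noetherian property of $P$ to extract the stabilization index $h_N$, and then conclude that any quasi-$F$-split fiber lies outside $P_{\geq h_N+1}$ and so has height at most $h_N$. The only difference is cosmetic---you spell out explicitly that $P_\infty$ is precisely the locus of non-quasi-$F$-split fibers, which the paper leaves implicit.
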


\begin{proof}
Since $P$ is Noetherian, Lemma \ref{lem:closed condition} yields $h_N$ with $P_{\ge h}=P_{\ge h_N+1}$ for all $h\ge h_N+1$.
For a hypersurface $X$ of degree $N$ in $\mathbb{P}^{N-1}_k$, if $X$ is quasi-$F$-split, then $X$ is a fiber over a point in $P_{\geq h}$, where $h$ is the height of $X$.
By the choice of $h_N$, the height of $X$ is at most $h_N$.
\end{proof}

\begin{thm}\label{thm:unbounded}
For every positive integer $h$, there exists a Calabi--Yau hypersurface $X_h$ over $k$ whose quasi-$F$-split height equals $h$.
\end{thm}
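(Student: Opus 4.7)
The plan is to leverage the stratification of the parameter space $P = \mathbb{P}^{M-1}_k$ of degree-$N$ hypersurfaces in $\mathbb{P}^{N-1}$ established in Lemma~\ref{lem:closed condition}. For each $N$, the loci $P_{\geq h} = V_+(b_1,\ldots,b_{h-1})$ form a descending chain in which each step is cut out by a single polynomial $b_i \in k[a_1,\ldots,a_M]$, namely the coefficient of $(x_1\cdots x_N)^{p^i-1}$ in the universal expression $G_i = G^{p-1}\widetilde\Delta_1(G^{p-1})^{1+p+\cdots+p^{i-2}}$. The theorem reduces to showing that for every $h \geq 1$ there exists $N$ for which $P_{\geq h} \setminus P_{\geq h+1}$ is nonempty: any closed point in this stratum gives, via Theorem~\ref{thm:Fedder's criterion for projective varieties}(2), a Calabi--Yau hypersurface of degree $N$ with quasi-$F$-split height exactly $h$.

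My strategy is explicit construction. For each $h$, I would choose $N$ large in terms of $h$ and $p$, and exhibit a polynomial $f \in k[x_1,\ldots,x_N]$ of degree $N$ such that $f_n \in \mathfrak{m}^{[p^n]}$ for $n \leq h-1$ but $f_h \notin \mathfrak{m}^{[p^h]}$, where $f_n := f^{p-1}\Delta_1(f^{p-1})^{1+p+\cdots+p^{n-2}}$. A natural starting point is the Fermat polynomial $\sum_i x_i^N$, which has height $\infty$ in characteristic $p \not\equiv 1 \pmod N$ by Example~\ref{eg:Fermat type}; this already shows $P_{\geq h} \neq \emptyset$ for every $h$. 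I would then perturb this Fermat polynomial by adjoining carefully chosen monomials so that the resulting $f$ lies in $V_+(b_1,\ldots,b_{h-1})$ yet has $b_h(f) \neq 0$. Inspiration for the shape of the perturbation comes from the RDP example $\{z^2 + x^2 y + x y^n = 0\}$ from the introduction, where a single exponent $n$ unboundedly controls the height $\lceil \log_2 n \rceil$ via a staircase of monomial terms.

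The main obstacle is the explicit combinatorial verification. The iterated action of $\Delta_1$ on $f^{p-1}$ is governed by multinomial coefficients modulo $p$, and tracking the coefficient of $(x_1\cdots x_N)^{p^n-1}$ through $n-1$ applications of $\Delta_1$ is delicate. The key lemma I would need is an arithmetic statement ensuring that, for a suitable monomial perturbation and a suitable $N$, the relevant contributions cancel in the first $h-1$ iterations but a specific contribution survives at step $h$. Alternatively, one might proceed non-constructively: combine the fact that $h_N$ is finite (Corollary~\ref{cor:height bound for Calabi--Yau case}) with an argument showing $h_N \to \infty$ as $N \to \infty$, and then use the one-equation-per-step structure of the chain $\{P_{\geq h}\}$ to force all intermediate strata to be nonempty for $N$ sufficiently large.
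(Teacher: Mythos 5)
Your outline correctly identifies the stratification machinery (Lemma~\ref{lem:closed condition} and Corollary~\ref{cor:height bound for Calabi--Yau case}) and the reduction to a nonemptiness statement, and the heuristic of specializing near the Fermat polynomial with $N$ growing in $p$ and $h$ is close in spirit to the paper's choice $N=p^h-1$. However, there are two genuine gaps beyond the combinatorics you flag.

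First, your reduction is not quite correct as stated: a point of $P_{\geq h}\setminus P_{\geq h+1}$ gives a degree-$N$ hypersurface of height $h$, but not automatically a \emph{Calabi--Yau} (i.e.\ smooth) one. The stratum $P_{\geq h}\setminus P_{\geq h+1}$ is only locally closed in $P$, and nonemptiness of it and of the smooth locus does not by itself force them to meet. This is precisely where the paper's key device enters, and it is absent from your plan: the paper restricts to the linear subspace $P'\subset P$ of polynomials of the form $c\,x_1^N+g'(x_2,\ldots,x_N)$ with $N=p^h-1$. On this subfamily the exponent of $x_1$ in every monomial of $f_\ell$ is divisible by $N$, so $f_\ell\notin\m^{[p^\ell]}$ forces $N\mid p^\ell-1$, hence $\ell\geq h$. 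Thus \emph{every} fiber over $P'$ has height $\geq h$, which makes the height-$h$ stratum $P'_h$ \emph{open} in the irreducible $P'$; intersecting it with the (nonempty, since Fermat lies in $P'$ and $p\nmid N$) open smooth locus then yields a smooth representative. Without an analogue of this restriction, the combinatorial verification you defer would have to be done in a form that simultaneously controls smoothness, which is substantially harder.

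Second, the non-constructive alternative you sketch has a flaw: the chain $P_{\geq 1}\supseteq P_{\geq 2}\supseteq\cdots$ being cut out one equation at a time does not rule out plateaus $P_{\geq h}=P_{\geq h+1}$ with a later strict drop, which is exactly the situation in which the height-$h$ stratum is empty. Knowing $h_N\to\infty$ controls where the chain stabilizes, not whether it decreases strictly at each step before stabilizing. To close this you would need, for instance, a codimension bound on $P_{\geq h_N+1}$ forcing the dimension to drop by exactly one at each step, which you do not supply. The paper's explicit monomial computation (producing a term of $G_h$ with coefficient $\pm 1$) sidesteps this entirely.
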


\begin{proof}
If $h=1$, then the Fermat type hypersurface in $\P^{p^3-2}$ is a smooth Calabi--Yau variety of height one (see Example \ref{eg:Fermat type}).
Therefore, we may assume that $h>1$.
Let $N:=p^h-1$.
We put
\[
\mathcal{M}':=\{m \in \mathcal{M} \mid m=x_1^N\ \mathrm{or}\ m \notin (x_1) \}.
\]
After reordering, we may assume that $m_1=x_1^N$ and $\mathcal{M}'=\{m_i \mid i \leq M' \}$ for some $M' \leq M$.
Taking the base change of $\mathcal{X}$ by the natural closed immersion $P':=\P^{M'-1} \hookrightarrow \P^{M-1}=P$, we define the family $\mathcal{X}' \to P'$ with the following Cartesian diagram:
\[
\xymatrix{
\mathcal{X}' \pullbackcorner  \ar[r]^-{\pi'} \ar[d]  & P' \ar[d] \\
\mathcal{X} \ar[r]_-{\pi} & P.
}
\]
Equivalently, $\mathcal{X}'$ is the zero locus of 
\[
G':=\sum_{1 \leq i \leq M'} a_i m_i 
\]
in $\P^{N-1} \times {\P^{M'-1}}$.
\begin{claim}\label{claim:smooth locus is non-empty}
There exists a nonempty open subset $U\subset P'$ over which $\pi'$ is smooth.
\end{claim}
\begin{claimproof}
It is enough to show that there exists a smooth fiber.
Since $\mathcal{M}'$ contains the monomials $x_1^N,\ldots,x_N^N$, the Fermat type hypersurface $x_1^N+\cdots+x_N^N=0$ appears as a fiber of $\pi'$, which shows the existence of a smooth fiber.
We note that $N=p^h-1$ is coprime to $p$.
\end{claimproof}

Since $P'$ is irreducible, it is enough to show that $P'_{h}:=(P_{\geq h} \backslash P_{\geq h+1})|_{P'}$ is a non-empty open subset.
Indeed, this shows $P'_{h}$ and $U$ in Claim \ref{claim:smooth locus is non-empty} has non-empty intersection, and a fiber over a point in the intersection gives a desired Calabi--Yau hypersurface.

First, we prove that $P'_h$ is open.
By Lemma \ref{lem:closed condition}, it is enough to show that there are no fibers of height $<h$.
We note that fibers of $\pi'$ are the zero loci of $g$ in $\mathbb{P}^{N-1}_k$, where $g$ is a $k$-linear sum of elements of $\mathcal{M}'$.
Therefore, in order to prove that $P'_h$ is open, it suffices to show the following claim:
\begin{claim}
Let $g$ be a $k$-linear sum of elements of $\mathcal{M}'$.
Then the height of $S/g$ is at least $h$.
\end{claim}
\begin{claimproof}
We can write $g=cx_1^N+g'$, where $c \in k$ and $g' \in k[x_2,\ldots,x_N]$.
Denote the height of $S/g$ by $l$. Then $g_l \notin \m^{[p^l]}$ by \cite[Theorem~5.8]{KTY1}, where $g_1:=g^{p-1}$ and 
\[
g_m:=g^{p-1}\Delta_1(g^{p-1})^{1+p+\cdots+p^{m-2}}
\]
for all $m \geq 2$.
Since the degree of $g_l$ is $(p^l-1)N$, we can deduce from $g_l \notin \m^{[p^l]}$ that the coefficient of $(x_1\cdots x_n)^{p^l-1}$ is non-zero by \cite[Lemma~5.6]{KTY1}.
On the other hand, 
the exponent of $x_1$ in any monomial appearing in $g_\ell$ is divisible by $N$ by the description $g=cx_1^N+g'$.
This shows that $p^l-1$ is divisible by $N$.
Since $N=p^h-1$, we have $l \geq h$.
\end{claimproof}

Finally, we prove that $P'_h$ is non-empty.
We choose monomials $B_1,\ldots,B_{p-1}$, $C_1,\ldots,C_{p-2}$ and $D_{i,j}$ for $1 \leq i \leq h$, $1 \leq j \leq p$ such that $B_1=x_1^N=x_1^{p^h-1}$, all such monomials are contained in $\mathcal{M}'$ and different from each other, and
\[
B \cdot C^{p+p^2+\cdots+p^{h-1}} \cdot D_1 \cdot D_2^{p} \cdots D_{h-1}^{p^{h-2}}=(x_1 \cdots x_N)^{p^h-1},
\]
where $B:=B_1 \cdots B_{p-1}$, $C:=C_1\cdots C_{p-2}$ and $D_i:=D_{i,1}\cdots D_{i,p}$.
Since the degree of $B$ (resp.~$C$, $D_i$) is $(p-1)N$ (resp.~$(p-2)N$, $pN$), 
it follows that the degree of the left-hand side in the above equation is
\[
\{(p-1)+(p-2)(p+p^2+\cdots+p^{h-1})+p(1+p+\cdots+p^{h-2})\}N=(p^{h}-1)N.
\]
The coefficients of $B_i$, $C_i$, and $D_{i,j}$ in $G$ are denoted by $a_{B_i}$, $a_{C_i}$ and $a_{D_{i,j}}$, respectively.
Furthermore, we define $a_B:=a_{B_1}\cdots a_{B_{p-1}}$, $a_C:=a_{C_1}\cdots a_{C_{p-2}}$ and $a_{D_i}:=a_{D_{i,1}}\cdots a_{D_{i,p}}$.
If we prove that the coefficient $\alpha \in k$ of the monomial
\[
a_B B \cdot (a_C C)^{p+p^2+\cdots+p^{h-1}} \cdot (a_{D_1} D_1) \cdot (a_{D_2} D_2)^p \cdots (a_{D_{h-1}} D_{h-1})^{p^{h-2}}
\]
appearing in $G_h$ is non-zero, then we obtain the desired result, that is, $P'_h$ is non-empty.
\begin{claim}
The above coefficient $\alpha$ is non-zero.
\end{claim}
\begin{claimproof}
First, we note that 
\[
\widetilde{\Delta}_1(G^{p-1})\equiv -G^{p(p-2)}\widetilde{\Delta}_1(G)
\]
modulo the image of the Frobenius morphism,
and thus $G_h$ can be denoted by
\[
G_h \equiv \pm G^{p-1} \cdot (G^{p-2})^{p+p^2+\cdots+p^{h-1}}\cdot \widetilde{\Delta}_1(G)^{1+p+\cdots+p^{h-2}}
\]
modulo the Frobenius image.
In particular, $\alpha$ is the coefficient of
\[
a_B B \cdot (a_C C)^{p+p^2+\cdots+p^{h-1}} \cdot (a_{D_1} D_1) \cdot (a_{D_2} D_2)^p \cdot \ldots \cdot (a_{D_{h-1}} D_{h-1})^{p^{h-2}}
\]
in
\[
G^{p-1} \cdot (G^{p-2})^{p+p^2+\cdots+p^{h-1}}\cdot \widetilde{\Delta}_1(G)^{1+p+\cdots+p^{h-2}}
\]
up to sign.
If we denote the coefficient of $(a_C C)^p \cdot a_{D_{h-1}} D_{h-1}$ in $G^{p(p-2)}\widetilde{\Delta}_1(G)$ by $\beta_1$ and the coefficient of
\[
a_B B \cdot (a_C C)^{p+p^2+\cdots+p^{h-2}} \cdot (a_{D_1} D_1) \cdot (a_{D_2} D_2)^p \cdot \ldots \cdot (a_{D_{h-2}} D_{h-2})^{p^{h-3}}
\]
in $G_{h-1}$ is denoted by $\alpha_1$, 
then we have $\alpha_1 \beta_1^{p^{h-2}}=\alpha$ up to sign.
By computation, $\beta_1=(-(p-2)!)^p (p-1)!=1$.
Repeating such arguments, $\alpha$ coincides with the coefficient of $(a_B B)(a_C C)^p (a_{D_1} D_1)$ in $G^{p^2-1}\widetilde{\Delta}_1(G)$ up to sign, which is computed as
\[
((p-1)!)^p(p-1)! \binom{2p-1}{p-1}=1.
\]
Therefore, $\alpha$ is $\pm 1$, as desired.
\end{claimproof}
\end{proof}

\begin{rmk}
By construction, the dimension of $X_h$ is $p^h-3$.
In \cite[Example~6.7]{KTY1}, we construct a $(2^h-1)$-dimensional Calabi--Yau hypersurface over $\F_2$ of height $2h$.
\end{rmk}

\subsection{Bielliptic surfaces}
In this subsection, we study the quasi-$F$-split height of a bielliptic surface over an algebraically closed field $k$ of characteristic $p>0$. 
\begin{rmk}[\cite{Mehta-Srinivas}]
Let $\pi \colon Y \to X$ be a finite \'etale morphism of normal varieties with $K_Y \sim_{\Q} 0$. In this case, we can see that $X$ is $F$-split if and only if so is $Y$.
Indeed, if $X$ is $F$-split, then the pullback of a splitting section gives a splitting section of $Y$, which shows $Y$ is $F$-split.
Conversely, we assume that $Y$ is $F$-split.
Passing to a Galois closure and using the previous argument, we may assume $\pi$ is a Galois quotient.
The Galois group is denoted by $G$.
Since we have
\[
\Hom_Y(F_*\sO_Y,\sO_Y) \cong H^0(Y, (1-p)K_Y) \cong k,
\]
the unique splitting section, which sends $F_*1$ to $1$, is $G$-equivariant.
Therefore, the section descends to $X$ and it is also a splitting section.
In particular, if $Y$ is a fiber product of elliptic curves $E_1 \times E_0$ and $X$ is a bielliptic surface, then it follows that the $F$-splitting of $X$ is equivalent to the $F$-splitting of both $E_1$ and $E_0$. 

Next, let us consider the case where the quasi-$F$-split height of $Y = E_1 \times E_0$ is $2$.
Note that $\sht (Y) \leq n$ is equivalent to the splitting of the morphism $\sO_{Y} \rightarrow F_* \var{W}_{n} \sO_{Y}$ (see \cite[Proposition 2.8]{KTY1}).
In this subsection, we refer to any morphisms $F_* \var{W}_n \sO_Y \to \sO_Y$ as well as $F_* W_n \sO_Y \to \sO_Y$ that give such a splitting as a \emph{splitting sections}.
Recall that there is an exact sequence
\[
0 \rightarrow F_* ( F_*\sO_Y/\sO_Y) \rightarrow F_* \var{W}_2 \sO_Y \rightarrow F_* \sO_Y \rightarrow 0
\]
(see \cite[Proposition 2.10]{KTY1}).
Since $Y$ is not $F$-split, we have
\[
\dim_k \Hom_Y (F_* \sO_Y / \sO_Y, \sO_Y) =1.
\]
Therefore, $\dim_{k}\Hom_Y( F_*\var{W}_2 \sO_{Y},\sO_Y) \leq2$. 
Moreover, since 
\[
\Hom (F_* \sO_Y, \sO_Y) \hookrightarrow \Hom (F_* \var{W}_{2}\sO_{Y}, \sO_Y)
\]
is not an isomorphism since $\sht (Y) =2$, we have $\dim_{k}\Hom_Y( F_*\var{W}_2 \sO_{Y},\sO_Y) = 2$.
Hence, it is not clear whether a splitting section is $G$-equivariant.
In Theorem \ref{thm:bielliptic}, we show that, for a certain case with $p=2$, the quasi-$F$-split heights of $X$ and $Y$ coincide by verifying the $G$-invariance of a splitting section.
On the other hand, in another case with $p=2$ or $p=3$, 
we produce a non-$G$-equivariant splitting section; in those cases, $X$ is not quasi-$F$-split even though $Y$ is.
\end{rmk}

First, we recall the definition of bielliptic and quasi-bielliptic surfaces.
\begin{defn}
Let $k$ be an algebraically closed field, and $X$ a smooth projective surface over $k$ with $K_X \sim_{\Q} 0$.
\begin{enumerate}
    \item 
    We say $X$ is a \emph{bielliptic surface} over $k$ if the Albanese morphism of $X$ is an elliptic fibration over an elliptic curve.
    \item
    We say $X$ is a \emph{quasi-bielliptic surface} over $k$ if the Albanese morphism of $X$ is a quasi-elliptic fibration over an elliptic curve. Note that quasi-bielliptic surfaces exist only in characteristic two and three.
\end{enumerate}
\end{defn}

In \cite[Section~3]{Bombieri2}, bielliptic surfaces are classified as follows.
A bielliptic surface $X$ can be written as a quotient
\[
X = (E_1 \times E_0)/G,
\]
where $E_0$ and $E_1$ are elliptic curves,
and $G \subset E_1$ is a finite subgroup scheme acting faithfully on $E_0$,
such that $G$ and the action of $G$ on $E_0$ are as in one of the following cases
(listed by the action of standard generators of $G$):
\begin{itemize}
\item[(a1)]
$G \simeq \Z/2\Z$, acting on $E_0$ by $y \mapsto -y$.
\item[(a2)]
$G \simeq (\Z/2\Z)^2$, acting on $E_0$ by
$y \mapsto -y$ and $y \mapsto y+c$ for some $c \in E_0[2]$. We have $p\neq 2$ in this case.
\item[(a3)]
$G \simeq (\Z/2\Z) \times \mu_2$, acting on $E_0$ by
$y \mapsto -y$, where $\mu_2$ acts on $E_0$ by translations.
\item[(b1)]
$G \simeq \Z/3\Z$, acting on $E_0$ by $y \mapsto \omega y$,
where $j(E_0) = 0$ and $\omega \in \Aut_0(E_0)$ has order~$3$. 
\item[(b2)]
$G \simeq \Z/3\Z$, acting on $E_0$ by $y \mapsto \omega y$
and $y \mapsto y+c$, where $\omega$ is as above and
$c \in E_0[3]$ satisfies $\omega(c) = c$.
We have $p \neq 3$ in this case.
\item[(c1)]
$G \simeq \Z/4\Z$, acting on $E_0$ by $y \mapsto i y$,
where $j(E_0) = 12^3$ and $i \in \Aut_0(E_0)$ has order~$4$.
\item[(c2)]
$G \simeq \Z/4\Z \times \Z/2\Z$, acting on $E_0$ by
$y \mapsto i y$ and $y \mapsto y+c$,
where $i$ is as above and $c \in E_0[4]$ satisfies $i(c) = c$. We have $p\neq 2$ in this case.
\item[(d)]
$G \simeq \Z/6\Z$, acting on $E_0$ by $y \mapsto -\omega y$,
where $E_0$ and $\omega$ are as in~(b).
\end{itemize}

\begin{thm}\label{thm:bielliptic}
\begin{enumerate}
    \item[\textup{(1)}] 
Let $X$ be a quasi-bielliptic surface over an algebraically closed field $k$. Then $X$ is not quasi-$F$-split.
\item[\textup{(2)}]
Let $X$ be a bielliptic surface over an algebraically closed field $k$ of characteristic $p>0$.
As explained above, $X$ can be expressed as a quotient
\[
X = (E_1 \times E_0)/G
\]
in one of the cases \textup{(a1)}--\textup{(d)}.
Then we have
\[
\sht(X)=
\begin{cases}
\infty & \text{if $p=2$ in case \textup{(c1)} or $p=3$ in case \textup{(b1)} or \textup{(d)}},\\[4pt]
\sht(E_{1}\times E_{0}) & \text{otherwise.}
\end{cases}
\]
Note that, in the former case, we have $\sht (E_1 \times E_0) =2$.
\end{enumerate}
\end{thm}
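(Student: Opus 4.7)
For part (1), a quasi-bielliptic surface $X$ admits a quasi-elliptic Albanese fibration over an elliptic curve by definition, so Corollary \ref{cor:genus one gibration surface} immediately forces $\sht(X) = \infty$.

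For part (2), write $\pi \colon Y := E_1 \times E_0 \to X$ for the \'etale $G$-Galois quotient. By Proposition \ref{prop:etale extension}(1), $\sht(Y) \le \sht(X)$, which settles the equality $\sht(X) = \sht(Y)$ when $\sht(Y) = \infty$. For $\sht(Y) < \infty$ the plan is the one indicated in the Remark preceding the theorem: $\sht(X) \le n$ is equivalent to the existence of a $G$-equivariant splitting $\phi \colon F_*W_n\sO_Y \to \sO_Y$ (which then descends along $\pi$), so the task is to decide case by case whether the affine variety of splittings on $Y$ admits a $G$-fixed point. When $\sht(Y) = 1$, the space $\Hom_Y(F_*\sO_Y, \sO_Y) \cong H^0(Y, (1-p)K_Y) \cong k$ is one-dimensional, so the unique section with $\phi(1)=1$ is automatically $G$-equivariant and $\sht(X) = 1$.

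The substantive case is $\sht(Y) = 2$, which accounts for the three exceptional cases and for the ``otherwise'' cases with exactly one of $E_0, E_1$ supersingular. Using the short exact sequence
\[
0 \to F_*(F_*\sO_Y/\sO_Y) \to F_*\var{W}_2\sO_Y \to F_*\sO_Y \to 0
\]
together with $\dim_k \Hom_Y(F_*\sO_Y/\sO_Y, \sO_Y) = 1$, one obtains $\dim_k \Hom_Y(F_*\var{W}_2\sO_Y, \sO_Y) = 2$; inside this space the image of $\Hom_Y(F_*\sO_Y, \sO_Y)$ is a $G$-invariant one-dimensional subspace and the splitting sections form its complementary affine line. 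The obstruction to a $G$-invariant splitting is therefore a cohomology class in $H^1\bigl(G, \Hom_Y(F_*\sO_Y, \sO_Y)\bigr)$. Via K\"unneth for $Y = E_1 \times E_0$, this class is controlled by the action of the automorphism part of $G$ on $\var{W}_2\sO_{E_0}$. In the ``otherwise'' branch I would exhibit a $G$-invariant splitting explicitly, or equivalently verify that the obstruction class vanishes, giving $\sht(X) = 2$. In the three exceptional cases --- (c1) with $p = 2$ and (b1), (d) with $p = 3$ --- I would check that the class does not vanish, so that no $G$-invariant splitting exists and $\sht(X) = \infty$.

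The main obstacle is the explicit evaluation of this cohomology class for each case of the Bombieri--Mumford classification. This requires a concrete description of $\var{W}_2\sO_{E_0}$ on a supersingular elliptic curve in characteristic $2$ or $3$ and of the Witt-vector lift of the automorphism $i$, $-1$, $\omega$, or $-\omega$, where the Verschiebung contribution is what distinguishes the three exceptional sub-cases from the others. Carrying out this linear-algebra computation uniformly across the seven sub-cases of the classification is the substantive work of the proof.
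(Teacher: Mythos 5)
Your part~(1) is exactly the paper's argument. Your high-level plan for part~(2) --- reduce to $\sht(Y)\in\{1,2,\infty\}$, handle the coprime and ordinary cases automatically, and then treat the $\sht(Y)=2$ cases by deciding whether the one-dimensional affine space of $\var{W}_2$-splitting sections admits a $G$-fixed point --- is the correct framework and matches the Remark preceding the theorem. The paper also implements the two ``otherwise'' cases (a1) and (d) with $p=2$ by explicitly checking $G$-equivariance of the multigraded Fedder-type splitting $\psi_{f_1,f_2}$ (using for (d) a reduction to (a1) via the degree-$3$ \'etale subquotient, rather than a direct check).

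There is, however, a genuine gap in your treatment of the exceptional cases. Your framework, as written, proves that there is no $G$-equivariant splitting of $F_*\var{W}_2\sO_Y\to\sO_Y$, and hence that $\sht(X)\neq 2$. It does not, by itself, exclude $\sht(X)=n$ for some finite $n\geq 3$: a $G$-equivariant splitting of $F_*\var{W}_n\sO_Y$ for $n\geq 3$ would still descend to a finite quasi-$F$-split height for $X$, and your $H^1$ class only tests the $n=2$ level. The paper closes this with a non-trivial reduction: assuming $\sht(X)=n<\infty$ and taking a $G$-equivariant $\varphi\colon F_*\var{W}_n\sO_Y\to\sO_Y$, it forms $\varphi_2':=\varphi\circ V^{n-2}$, uses the isomorphism $\Hom(F^{n-1}_*\var{W}_2\sO_Y,\omega_Y)\simeq F^{n-2}_*\Hom(F_*\var{W}_2\sO_Y,\omega_Y)$ and the trace map, and the fact that $\Hom(F_*\sO_Y,\sO_Y)\simeq k$ is a \emph{trivial} $G$-representation (because the relevant part of $G$ is a $p$-group while $k^\times$ has no $p$-torsion), to show $\varphi_2$ factors through $F_*\sO_Y$; this kills $F_*B_{Y,1}$, and the exact sequence $0\to F^{n-1}_*B_{Y,1}\xrightarrow{V^{n-1}} F_*\var{W}_n\sO_Y\xrightarrow{R} F_*\var{W}_{n-1}\sO_Y\to 0$ then peels $\varphi$ down to a $G$-equivariant splitting at level $n-1$. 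Iterating produces a $G$-equivariant $\var{W}_2$-splitting, contradicting the obstruction. Without this peeling step, your argument does not yield $\sht(X)=\infty$. Separately, you acknowledge that the explicit obstruction/equivariance computation is left undone; in the paper this is done concretely, by writing down the section $\psi_{f_1,f_2}$ with $f_1=g_1\otimes u_{E_1}(F_*h)$, $f_2=g_2\otimes h$, and checking (non)equivariance against the order-$4$ (resp.\ order-$3$) automorphism of the supersingular Weierstrass model, rather than by evaluating an abstract $H^1$ class.
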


\begin{proof}
The assertion (1) follows from Corollary \ref{cor:genus one gibration surface}.

We prove the assertion (2).
In the following, we put $Y:= E_1 \times E_0$.
First, if the order of $G$ is coprime to $p$,
then we have $\sht(X) = \sht (Y)$ by Propositions \ref{prop:fiber space} and \ref{prop:etale extension}.
In particular, we have $\sht(X) = \sht(Y)$ when $p > 3$, 
when $p = 2$ in cases \textup{(b1)} and \textup{(b2)}, 
and when $p = 3$ in cases \textup{(c1)} and \textup{(c2)}.

Next, if the elliptic curves $E_{1}$ and $E_{0}$ are ordinary, we have $\sht (X) =1$ by \cite[Proposition 7.2]{Ejiri}.
When $p = 2$ in case \textup{(a3)}, the elliptic curves $E_1$ and $E_0$ are ordinary 
(note that if $E_0$ admits a subgroup scheme $\mu_p$, then $E_0$ is ordinary).

Therefore, it suffices to treat the remaining cases below.
\begin{itemize}
    \item[(i)] 
    $p=2$ in case (a1)
    \item[(ii)]
    $p=2$ in case (d)
    \item[(iii)]
    $p=2$ in case (c1)
    \item[(iv)]
    $p=3$ in case (b1)
    \item[(v)]
    $p=3$ in case $(d)$
\end{itemize}
Note that $E_1$ has non-trivial $p$-torsion in every case, so $E_1$ is ordinary.
As stated before, we may assume that $E_0$ is supersingular.
Note that, in this case, $Y=E_1 \times E_0$ is of height $2$ by Proposition \ref{prop:fiber products}.
We fix an embedding $E_ 1 \subset \P^2_k$ as a hypersurface, and let $Q \in k[x,y,z]$ be the equation.
Moreover, we also fix $P \in k[x,y,z]$
which defines an elliptic curve isomorphic to $E_{0}$ (note that there is only one isomorphism class of supersingular elliptic curves in characteristic $2$ or $3$).
The equation $P$ is essentially unique, but for convenience, we shall replace $P$ as needed in the following discussion.
In the following, we regard $E_{1}$ and $E_{0}$ as closed subschemes of $\P^{2}_{k}$ by these equations.
We put $S_{E_{1}}:= k[x,y,z]/Q$ and $S_{E_{0}}:=k[X,Y,Z]/P$.
We also put 
\[
S:= S_{E_{1}} \otimes_{k} S_{E_{0}} \simeq k[x,y,z,X,Y,Z]/(Q,P).
\]
As in Convention \ref{conv:graded ring}, we define $u$ and $\Delta_1$ for this $S$ by the $p$-basis
\[
\{
F_* (x^{i_1} y^{i_2} z^{i_3} X^{i_4} Y^{i_5} Z^{i_6}) \mid 0 \leq i_1, \ldots, i_6 \leq p-1
\}.
\]
Similarly, we also define $u_{E_j}$, $\Delta_1^{{E_j}}$ for $S_{E_j}$ $(j=0, 1).$
We put
\[
\theta_{E_0} (F_* a ) = u_{E_0} (F_{*} (a \Delta_{1}^{E_0} (P))), \quad 
\theta_{{E_1}} (F_* b ) = u_{{E_1}} (F_{*} (b \Delta_{1}^{{E_1}} (Q)))
\]
for $a \in S_{E_0}$ and $b \in S_{E_1}$.
Note that the action of $G$ on $E_j$ induces an action on $S_{E_j}$ for $j=0,1$, and hence also on $S$.

First, we will treat the case (i).
In this case, we prove $\sht (X) =2$.
We may take
\[
P:= X^3+Y^3+Z^3 
\]
in this case.
If we fix an origin of $E_{0}$ as $(1:1:0)$, then the map $[-1]$ can be written as
\[
[-1] \colon (X:Y:Z) \mapsto (Y:X:Z).
\]
We will show that $X$ is of height $2$.
Recall that we have a splitting section
\[
\psi_{f_{1},f_{2}}
\colon 
F_* \var{W}_{2} S
\rightarrow S.
\]
Here, $f_{1}$ and $f_{2}$ are defined by
\begin{equation}
\label{eqn:f1f2}
f_1 = g_1 \otimes u_{{E_1}} (F_* h)  \quad \textup{and} \quad f_2 = g_2 \otimes h,
\end{equation}
where $g_{1}:=\theta_{{E_0}}(F_*P^{p-1})$, $g_{2}:=P^{p^2-1}$, and $h:= Q^{p^2-1}$ (see the proof of \cite[Lemma~4.3, Theorem~4.8, Lemma~4.10, Theorem~4.11, and Theorem~5.8]{KTY1} and Proposition \ref{prop:fiber products}), and $\psi_{f_1, f_2}$ is an $S$-module morphism satisfying
\begin{eqnarray*}
\psi_{f_1, f_2}(F_* ([a])) &=& u (F_* (f_{1} a)) + u^2 (F^2_* (f_2 \Delta_1 (a))), \\
\psi_{f_{1}, f_2} (F_* (V[a])) &=& u^2 (F_*^2 (f_2 a) )
\end{eqnarray*}
for any $a\in S$ and $s \in \{0,1 \}$.
First, we show that $\psi_{f_1, f_2}$ is $G$-equivariant, i.e.,\ for the generator $g  \in \Z/2\Z$, we show
\[
\psi_{f_1,f_2} \circ g = g \circ \psi_{f_1,f_2},
\]
where we use the natural $G$-action induced on $F_{*}\overline{W}_{2}(S)$.
It suffices to show the equalities 
\begin{equation}
\label{eqn:firstpsi}
\psi_{f_1,f_2} \circ g (F_* ([x^{i_1} y^{i_2} z^{i_3} X^{i_4} Y^{i_5} Z^{i_6}]
))= g \circ \psi_{f_1,f_2} (F_*([x^{i_1} y^{i_2} z^{i_3} X^{i_4} Y^{i_5} Z^{i_6}])),
\end{equation}
\begin{equation}
\label{eqn:secondpsi}
\psi_{f_1,f_2} \circ g (F_* (V[x^{i_1} y^{i_2} z^{i_3} X^{i_4} Y^{i_5} Z^{i_6}]
))= g \circ \psi_{f_1,f_2} (F_*(V[x^{i_1} y^{i_2} z^{i_3} X^{i_4} Y^{i_5} Z^{i_6}]))
\end{equation}
for $i_1, \ldots, i_6 \in \Z_{\geq0}.$
Let $g_x, g_y, g_z \in k[x,y,z]$ be representatives of $g (x), g(y), g(z) \in k[x,y,z]/Q$.
First, we show (\ref{eqn:secondpsi}).
The left-hand side of (\ref{eqn:secondpsi}) is equal to 
\begin{eqnarray}
&u^2 (F^2_{*} (P^{p^2-1} Q^{p^2-1} g_x^{i_1} g_y^{i_2} g_z^{i_3} Y^{i_4} X^{i_5} Z^{i_6})) \\
= & u_{E_0}^2 (F^2_*P^{p^2-1} Y^{i_4} X^{i_5} Z^{i_6}) u_{E_1}^2 (F^2_*Q^{p^2-1} g_x^{i_1} g_y^{i_2} g_z^{i_3} ).
\label{eqn:secondprime}
\end{eqnarray}
Since $E_1$ is an ordinary elliptic curve,
the $F$-splitting section
\begin{equation}
\label{eqn:G-invariantsplitting}
u_{E_1} ( F_* (Q^{p-1} -)) \colon F_* S_{E_1} \rightarrow S_{E_1}
\end{equation}
is $G$-equivariant. Therefore, we have
\begin{eqnarray*}
u_{E_1}^2 (F^2_* (Q^{p^2-1} g_x^{i_1} g_y^{i_2} g_z^{i_3} )) &=& u_{E_1} (F_* (Q^{p-1} u_{E_1} (F_* (Q^{p-1} g (x^{i_1} y^{i_2} z^{i_3}))))) \\
&=& g( u_{E_1}(F_* (Q^{p-1} u_{E_1} (F_* (Q^{p-1} x^{i_1} y^{i_2} z^{i_3})) )))\\
&=& g ( u_{E_1}^2 (F^2_* (Q^{p^2-1} x^{i_1} y^{i_2} z^{i_3}))) \in S_{E_1} \subset S.
\end{eqnarray*}
On the other hand, 
we have
\[
u_{E_0}^2 (F^2_*P^{p^2-1} Y^{i_4} X^{i_5} Z^{i_6}) = g (u_{E_0}^2 (F_*^2 (X^{i_4} Y^{i_5} Z^{i_6})))
\]
since $P$ is a symmetric polynomial and $u_{E_0}$ is clearly $G$-equivariant.
Therefore, (\ref{eqn:secondprime}) is equal to 
\begin{eqnarray*}
&&g (u_{E_0}^2 (F_*^2 (X^{i_4} Y^{i_5} Z^{i_6})) 
 u_{E_1}^2 (F^2_* (Q^{p^2-1} x^{i_1} y^{i_2} z^{i_3}))) \\
 &=&  g \circ \psi_{f_1,f_2} (F_*(V[x^{i_1} y^{i_2} z^{i_3} X^{i_4} Y^{i_5} Z^{i_6}]))
\end{eqnarray*}
as desired.
Next, we show (\ref{eqn:firstpsi}).
Note that a direct computation shows that $g_1 = XYZ$ and $u_{E_1} (Q^{p^2-1}) = Q^{p-1} u_{E_1} (Q^{p-1}) = c Q^{p-1}$, where $c\in k^{\times}$.
Therefore, the left-hand side of (\ref{eqn:firstpsi}) is equal to
\begin{eqnarray*}
u(F_* (cXYZQ^{p-1} g_x^{i_1} g_y^{i_2} g_z^{i_3} Y^{i_4} X^{i_5} Z^{i_6})) + u^2 (F^2_* (P^{p^2-1} Q^{p^2-1} \Delta_1 (g_x^{i_1} g_y^{i_2} g_z^{i_3} Y^{i_4} X^{i_5} Z^{i_6}))).
\end{eqnarray*}
The second term is equal to
\begin{eqnarray*}
&&u (F_* (P^{p-1} Q^{p-1} u( F_*( P^{p-1} Q^{p-1} \Delta_1 (g_x^{i_1} g_y^{i_2} g_z^{i_3} Y^{i_4} X^{i_5} Z^{i_6})))))\\
&=&
u (F_* (P^{p-1} Q^{p-1} u( F_*( P^{p-1} Q^{p-1} Y^{2i_4} X^{2i_5} Z^{2i_6} \Delta_1 (g_x^{i_1} g_y^{i_2} g_z^{i_3}))))) \\
&=& 
u (F_* (P^{p-1} Q^{p-1} Y^{i_4} X^{i_5} Z^{i_6} u( F_*( P^{p-1} Q^{p-1} \Delta_1 (g_x^{i_1} g_y^{i_2} g_z^{i_3})))))=0,
\end{eqnarray*}
where the last equality holds since $u_{E_0}(F_* P^{p-1}) =0$.
Therefore, by using $G$-equivariance of (\ref{eqn:G-invariantsplitting}) again, we can show that (\ref{eqn:firstpsi}) equals 
\begin{eqnarray*}
&&u_{E_0}(F_*  (X^{i_5 +1}Y^{i_4 +1}Z^{i_6 +1})) u_{E_1} (F_* (cQ^{p-1} g_x^{i_1} g_y^{i_2} g_z^{i_3}))\\
&=& g (u_{E_{0}} (F_* (X^{i_4 +1} Y^{i_5 +1} Z^{i_6 +1})) ) g( u_{E_1} (F_* (cQ^{p-1} x^{i_1} y^{i_2} z^{i_3}))) \\
&=& g (u (F_* (cXYZ Q^{p-1} x^{i_1} y^{i_2} z^{i_3} X^{i_4} Y^{i_5} Z^{i_6})))\\
&=& g \circ \psi_{f_1,f_2} (F_*([x^{i_1} y^{i_2} z^{i_3} X^{i_4} Y^{i_5} Z^{i_6}])),
\end{eqnarray*}
as desired.
In the last equality, we use
\[
u^2 (F^2_* (P^{p^2-1} Q^{p^2-1} \Delta_1 (x^{i_1} y^{i_2} z^{i_3} X^{i_4} Y^{i_5} Z^{i_6}))) =0.
\]
Next, we show that $\psi_{f_1, f_2}$ induces a splitting section on $X$.
Note that $E_0 \subset \P^2$ is covered by $G$-stable affine open subsets
\[
\{ D_+ (a_j)  \}_{j \in J_0} 
\]
for some homogeneous elements $a_j \in S_{E_0}$ (cf.\ the proof of \cite[II, 7, Theorem]{Mumford} for example).
Similarly, $E_1 \subset \P^2$ is covered by $G$-stable affine open subsets 
\[
\{ D_+ (b_k)  \}_{k \in J_1} 
\]
for some homogeneous elements $b_k \in S_{E_1}$.
Therefore, $E_1 \times E_0 \subset \P^2 \times \P^2$ is covered by $G$-stable affine open subsets 
\[
\{
D_+ (a_j b_k)
\}_{j\in J_0, k\in J_1}.
\]
We put
$S_{j,k} := S[1/{(a_jb_k)}]_{0}.$
Note that we have
\[
D_+ (a_j b_k) \simeq \Spec S_{j,k}
\]
By the proof of Proposition \ref{wprop:multigraded algebra and proj}, $\psi_{f_1, f_2}$ induces a morphism
\[
\psi_{f_1, f_2}^{(j,k)} \colon F_* W_2 (S_{j,k})  \rightarrow S_{j,k}
\]
which in turn gives a splitting section
\[
F_* W_2 \sO_{Y} \rightarrow \sO_{Y}
\]
by gluing.
By the choice of $a_j$ and $b_k$, there is a natural $G$-action on $S_{j,k}$.
Moreover, the $G$-quotient $E_1 \times E_0 \rightarrow X$ restricts $\Spec S_{j,k} \rightarrow \Spec S_{j,k}^{(G)}$, where $S_{j,k}^{(G)}$ is the $G$-invariant part of $S_{j,k}$.
Moreover, since $\psi_{f_1, f_2}$ is $G$-equivariant, so is $\psi_{f_1.f_2}^{(j,k)}$.
Therefore, $\psi_{f_1,f_2}^{(j,k)}$ induces a morphism
\[
\overline{\psi_{f_1, f_2}^{(j,k)}} \colon F_* W_2 (S_{j,k}^{(G)}) \rightarrow S_{j,k}^{(G)},
\]
which gives a desired splitting section
\[
\psi \colon F_*W_2 \sO_{X} \rightarrow \sO_X
\]
by gluing.
This shows that $X$ is $2$-quasi-$F$-split.
By Proposition \ref{prop:etale extension}, $X$ is not $F$-split. Therefore, we have $\sht (X) =2$ as desired.

Next, we treat the case (ii).
In this case, we prove $\sht (X) =2$.
Note that the $G$-quotient morphism can be factored as
\[
E_{1} \times E_{0} \rightarrow (E_{1} \times E_{0})/ (\Z/2\Z) \rightarrow X,
\]
where the second morphism is a finite \'{e}tale morphism of degree $3$.
By (i), we have $\sht ( E_1 \times E_0 / (\Z/2\Z)) = 2$.
Therefore, by Proposition \ref{prop:fiber space} and Proposition \ref{prop:etale extension}, we have $\sht (X) = 2$ as desired.

Next, we treat the case (iii).
In this case, we prove $\sht (X) = \infty$.
We may take 
\[
P:= X^3 + Y^2Z + YZ^2.
\]
If we fix an origin of $E_{0}$ as $(0:1:0)$, then automorphisms of order 4 are the following six (cf.\ \cite[Appendix A, the proof of Proposition 1.2]{Silverman}):
\[
(X:Y:Z) \mapsto (X + s^2 Z : Y + sX + tZ :Z),
\]
where $s, t \in k$ are solutions of
\[
s^3 =1, \quad  1 + t + t^2 =0.
\]
We fix $s$ and $t$ corresponding to the automorphism $i$.
Suppose, for a contradiction, that 
$n := \sht (X) < \infty$.
By the proof of Proposition \ref{prop:etale extension}, there is a $G$-equivariant splitting section
\[
\varphi F_{*} \var{W}_n \sO_{Y} \rightarrow \sO_{Y}.
\]

We show that there is a $G$-equivariant  splitting section
\[
F_{*} \var{W}_2 \sO_{Y} \rightarrow \sO_{Y}.
\]
Suppose, for a contradiction, that there is no such a splitting section. We have $n\geq 3.$
We define
\[
\varphi_2' \colon 
F^{n-1}_*\var{W}_2\sO_Y 
\xrightarrow{V^{\,n-2}} 
F_* \var{W}_n\sO_Y 
\xrightarrow{\varphi} 
\sO_Y.
\]
Then $\varphi_2'$ is $G$-equivariant.
Fixing an isomorphism $\sO_Y \simeq \omega_{Y}$, we have
\[
\begin{aligned}
\Hom(F^{n-1}_*\var{W}_2\sO_Y,\sO_Y)
&\simeq 
\Hom(F^{n-1}_*\var{W}_2\sO_Y,\omega_Y)\\
&\simeq 
F^{n-2}_*\Hom(F_*\var{W}_2\sO_Y,\omega_Y)\\
&\simeq 
F^{n-2}_*\Hom(F_*\var{W}_2\sO_Y,\sO_Y).
\end{aligned}
\]
Let $\varphi_2 \colon F_*\var{W}_2\sO_Y \to \sO_Y$ be a homomorphism corresponding to $\varphi_2'$ via the above isomorphism.
We have
$
\varphi'_2 = \mathrm{Tr}^{n-2} \circ F^{n-2}_*  (\varphi_2), 
$
where \[
\mathrm{Tr} \colon F_* \sO_Y \simeq F_*\omega_Y \rightarrow \omega_Y \simeq \sO_Y
\]
is the trace map.
Note that $\Hom (F_*\sO_Y, \sO_{Y}) \simeq k$ is a trivial $G \simeq \Z/2\Z$-representation since $\chara k =2$. Therefore, $\varphi_2$ is $G$-equivariant.
Note that $\Hom(F_*\var{W}_2\sO_Y,\sO_Y)$ is spanned by $\Hom (F_* \sO_Y, \sO_Y) \simeq k$ and a (single) splitting section, which is not $G$-equivariant by the assumption for contradiction (cf.\ the discussion at the beginning of this subsection). Note that any element of $\Hom(F_*\sO_Y, \sO_Y)$ maps $ F_*1$ to $0$ since $Y$ is not $F$-split. 
Therefore, $\varphi_2$ is the image of the dual map
\[
\Hom(F_*\sO_Y,\sO_Y) 
\longrightarrow 
\Hom(F_*\var{W}_2\sO_Y,\sO_Y)
\]
of the restriction morphism $F_*\var{W}_2\sO_Y \to F_*\sO_Y$.
From the exact sequence 
\[
0 \to F_*B_{Y,1}:=F^2_*\sO_Y/\mathrm{Im}(F)
\xrightarrow{V} F_*\var{W}_2\sO_Y 
\to F_*\sO_Y \to 0,
\]
the composition
\[
F_*B_{Y,1} \xrightarrow{V} F_*\var{W}_2\sO_Y 
\xrightarrow{\varphi_2} \sO_Y
\]
is zero.
Therefore, the composition
\[
F^{n-1}_*B_{Y,1} 
\xrightarrow{V} F^{n-1}_*\var{W}_2\sO_Y 
\xrightarrow{V^{\,n-2}} F_*\var{W}_n\sO_Y 
\xrightarrow{\varphi} \sO_Y
\]
is also zero.
By the exact sequence
\[
0 \to F^{n-1}_*B_{Y,1} 
\xrightarrow{V^{\,n-1}} F_*\var{W}_n\sO_Y 
\xrightarrow{R} F_*\var{W}_{n-1}\sO_Y \to 0,
\]
the homomorphism $\varphi$ induces a $G$-equivariant splitting section
\[
F_*\var{W}_{n-1}\sO_Y \to \sO_Y.
\]
Repeating this argument inductively, we eventually obtain a $G$-equivariant splitting section
\[
F_*\var{W}_2\sO_Y \to \sO_Y,
\]
which contradicts our assumption.
Hence, we conclude that 
there exists a $G$-equivariant section
\[
s_1 \colon F_* \var{W}_2 \sO_{Y} \rightarrow \sO_{Y}. 
\]
As we saw before (cf.\ the discussion at the beginning of this subsection), $\Hom (F_* \var{W}_2 \sO_{Y}, \sO_{Y})$ is spanned by $s_1$ and an image $s_2$ of a generator of $\Hom (F_* \sO_{Y}, \sO_{Y}) \simeq k$ via the natural injection
\[
\Hom (F_* \sO_{Y}, \sO_{Y}) \rightarrow 
\Hom (F_* \var{W}_2 \sO_{Y}, \sO_{Y}).
\]
Since this injection is $G$-equivariant, $s_2$ is also $G$-equivariant.
By the proof of Proposition \ref{wprop:multigraded algebra and proj}, 
there is a $G$-equivariant multi-graded splitting section corresponding to $s_1$
\[
\widetilde{s}_1 \colon F_* \var{W}_2 S \rightarrow S
\]
which is $G$-equivariant.
By replacing $s_1$ with $s_1 + rs_2$ for some $r \in k$, we may assume that $\widetilde{s}_1$ is equal to the morphism 
\[
\psi_{f_1, f_2} \colon F_* \var{W}_2 S \rightarrow S,
\]
where $f_1$ and $f_2$ are defined by (\ref{eqn:f1f2}).
In our case, we have
\[
f_1 = XYZc Q^{p-1}, \quad f_2 =P^{p^2-1}Q^{p^2-1},
\]
where we put $u_{E_1}(F_* Q^{p-1}) =c \in k^{\times}$ as before.
To derive a contradiction, it suffices to show that $\psi_{f_1,f_2}$ is not $G$-equivariant.
Let $g \in G \simeq \Z/4\Z$ be a generator.
We show that 
\begin{equation}
\label{eqn:contradictionc1}
g(\psi_{f_1,f_2} (F_* [X^3 Y^2 Z]) ) \neq  \psi_{f_1,f_2}( g (F_* [X^3 Y^2 Z])).
\end{equation}
Since
\begin{eqnarray*}
\psi_{f_1,f_2} (F_* [X^3 Y^2 Z]) &=& 
u (F_* (f_1 X^3 Y^2 Z)) + u^2 (F^2_* f_2 \Delta_1 (X^3Y^2Z)) =0,
\end{eqnarray*}
the left-hand side of (\ref{eqn:contradictionc1}) equals $0$.
On the other hand, the right-hand side of (\ref{eqn:contradictionc1}) is equal to 
\begin{eqnarray*}
&&\psi_{f_1,f_2} (F_* [(X+s^2Z)^3 (Y+sX+tZ)^2 Z]) \\
&=& u(F_* ( f_1 (X+s^2Z)^3 (Y+sX+tZ)^2 Z)) + u^2(F^2_* (f_2 \Delta_1 ((X+s^2Z)^3 (Y+sX+tZ)^2 Z))).
\end{eqnarray*}
Note that we have
\begin{equation}
\label{eqn:321}
\begin{aligned}
&(X+s^2Z)^3 (Y+sX+tZ)^2 Z \\
=\ & Z (s^2 X^5 +  sX^4Z + X^3Y^2 + t X^3Z^2 + s^2 X^2Y^2Z \\
& +  s^2t X^2 Z^3 + sXY^2Z^2 + st^2 XZ^4 + Y^2Z^3 + t^2Z^5).
\end{aligned}
\end{equation}
Therefore, we have
\begin{eqnarray*}
&&u(F_* (f_1 (X+s^2Z)^3 (Y+sX+tZ)^2 Z)) \\
&=& u(F_* (XYZcQ^{p-1}(X+s^2Z)^3 (Y+sX+tZ)^2 Z )) \\
&=& c^{1+ 1/p} u_{E_0} (F_* (XYZ^2 (sX^4Z +s^2 X^2 Y^2 Z + s^2t X^2 Z^3 + Y^2Z^3 + t^2Z^5))\\
&=& c^{1+ 1/p} (s^{1/2} X^2Z + s XYZ  + st^{1/2}XZ^2 + YZ^2 + t Z^3). 
\end{eqnarray*}
On the other hand, noting that no monomial with an odd exponent of $y$ appears in (\ref{eqn:321}), a direct computation shows that 
\begin{eqnarray*}
&&u_{E_0} (F_*(P^{p-1}\Delta_1 ((X+s^2Z)^3 (Y+sX+tZ)^2 Z) )) \\
&=& u_{E_0} (F_*(YZ^2\Delta_1 ((X+s^2Z)^3 (Y+sX+tZ)^2 Z) )) \\
&=& X^4Z^2  + s X^2 Y^2 Z^2 + st X^2 Z^4 + s^{1/2} Y^2Z^4 + s^{1/2} t^{1/2} Z^6. 
\end{eqnarray*}
Therefore, we have
\begin{eqnarray*}
&& u^2(F^2_* (f_2 \Delta_1 ((X+s^2Z)^3 (Y+sX+tZ)^2 Z))) \\
&=& u_{E_1}^2 (F^2_* Q^{p^2-1}) u_{E_0}^2 (F^2_* (P^{p^2-1}\Delta_1 ((X+s^2Z)^3 (Y+sX+tZ)^2 Z))) \\
&=& c^{1+ 1/p} u_{E_0} (F_* (P^{p-1} u_{E_0} (F_*(P^{p-1}\Delta_1 ((X+s^2Z)^3 (Y+sX+tZ)^2 Z) )))) = 0.\\
\end{eqnarray*}
Consequently, the right-hand-side of (\ref{eqn:contradictionc1}) is equal to
\[
c^{1+ 1/p} (s^{1/2} X^2Z + s XYZ  + st^{1/2}XZ^2 + YZ^2 + t Z^3) \in S,
\]
which is not equal to zero (since it is non-zero at $(0:1:1) \in E_0$) and this yields a contradiction.

Next, we treat the case (iv).
We may take
\[
P := Y^2Z-X^3-XZ^2
\]
in this case.
If we fix an origin of $E_0$ as $(0:1:0)$, then automorphisms of order 3 are the following two (cf.\ \cite[Appendix A, the proof of Proposition 1.2]{Silverman}):
\[
(X:Y:Z) \mapsto (X \pm iZ:Y:Z),
\]
where $i$ is a fixed square root of $-1\in k$.
Suppose that $n:=\sht(X)<\infty$.
By the same argument as in case~(iii), it reduces to showing that the morphism $\psi_{f_{1},f_{2}}$ 
is not $G$-equivariant.
Here, $f_{1}$ and $f_{2}$ are defined by (\ref{eqn:f1f2}).
In our case, 
we have
\[
f_1 =(X^5Z-X^2Y^2Z^2 +X^3Z^3)cQ^{p-1}, \quad f_2 = P^{p^2-1} Q^{p^2-1},
\]
where we put $u_{E_1} (F_* Q^{p-1}) =c \in k^{\times}.$
Let $g\in G \simeq \Z/3\Z$ be a generator.
We show that
\begin{equation}
\label{eqn:contradictionb1}
g(\psi_{f_1,f_2} (F_* [Y^2 Z]) ) \neq  \psi_{f_1,f_2}( g (F_* [Y^2 Z])).
\end{equation}
Since
\begin{eqnarray*}
\psi_{f_1,f_2} (F_* [Y^2 Z]) &=& u(F_* (f_1 Y^2Z)) + u^2 (F^2_* (f_2 \Delta_1 (Y^2Z))) \\
&=& c^{1+ 1/p} u_{E_0} (F_* (Y^2 Z(X^5Z-X^2Y^2Z^2 + X^3Z^3))) \\
&=& c^{1+1/p} X,
\end{eqnarray*}
the left-hand side of (\ref{eqn:contradictionb1}) equals
\[
g(c^{1+1/p} X) = c^{1+1/p} ( X \pm iZ).
\]
On the other hand, since 
\[
g(F_* [Y^2Z]) = F_* [Y^2Z],
\]
the right-hand side of (\ref{eqn:contradictionb1}) equals
$c^{1+/1/p} X$.
Since $\pm c^{1+1/p} iZ \in S$ is non-zero, this yields a contradiction.

Finally, we treat the case (v).
The $G$-quotient morphism can be factored as
\[
E_1 \times E_0 \rightarrow (E_1 \times E_0)/ (\Z/3\Z) \rightarrow X,
\]
where the second morphism is a finite \'{e}tale morphism of degree 2.
By (iv), we have $\sht ((E_{1} \times E_0) / (\Z/3\Z)) = \infty.$
Therefore, by Proposition \ref{prop:etale extension}, we have
$\sht (X) = \infty$ as desired.
\end{proof}

\subsection{Fano varieties}
In this subsection, we compute quasi-$F$-split heights of certain Fano varieties.
In particular, we show that in dimension greater than two, there always exist non-quasi-$F$-split smooth Fano varieties, while all smooth del Pezzo surfaces are quasi-$F$-split.
See also
\cite{Kawakami-Tanaka2,Kawakami-Tanaka4} for further results on quasi-$F$-splitting of smooth Fano varieties.

\begin{prop}[Cubic hypersurfaces]\label{prop:cubic}
    Let $X$ be a smooth cubic 
    hypersurface of dimension $d \geq 2$.
Then one of the following holds:
\begin{enumerate}
    \item[\textup{(1)}] $\sht(X)=1$ or
    \item[\textup{(2)}] $\sht(X)=2$, $p=2$, and $X$ is isomorphic to the Fermat cubic.
\end{enumerate}
\end{prop}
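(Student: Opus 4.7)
The plan is to apply the Fedder-type criterion (Theorem~\ref{thm:Fedder's criterion for projective varieties}), treating the characteristic separately.

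For $p\ge 3$, I would show $\sht(X)=1$ via the inversion of adjunction (Proposition~\ref{cor:hypersurface inversion of adjunction}). Let $Y:=X\cap H_1\cap\cdots\cap H_{d-1}\subset\mathbb{P}^{d+1}$ be the intersection of $X$ with $d-1$ general hyperplanes, which is a smooth elliptic curve. The proposition gives $\sht(Y)\ge\sht(X)$, so it suffices to arrange for $Y$ to be ordinary. Since the supersingular locus in the moduli of elliptic curves is $0$-dimensional for $p\ge 3$, and the family $\{X\cap L\}$ over the Grassmannian of $2$-planes is not isotrivial of supersingular type for a smooth cubic (verified, e.g., by checking that the Hasse invariant of the section---the coefficient of $x_a^{p-1}x_b^{p-1}x_c^{p-1}$ in $f^{p-1}|_{\{x_d=0,\ d\ne a,b,c\}}$---is nonzero for some triple of coordinates), a general $Y$ is ordinary, so $\sht(X)=1$.

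For $p=2$, Theorem~\ref{thm:Fedder's criterion for projective varieties} shows $X$ is $F$-split iff $f\notin\m^{[2]}$, i.e.,\ iff $f$ contains a monomial $x_ix_jx_k$ with distinct indices. If not, $f=\sum_{k,i} m_{ki}\,x_k^2 x_i$, and the computation $\partial f/\partial x_j=\sum_k m_{kj}x_k^2$ in characteristic~$2$ shows that smoothness of $X$ is equivalent to $M:=(m_{ki})\in GL_{d+2}(k)$. I would then show that every such $f$ is projectively isomorphic to the Fermat cubic. Under $x=Ay$, the matrix transforms as $M\mapsto (A^{(2)})^T M A$, so the orbit of $I$ equals the image of the map $\phi\colon A\mapsto (A^{(2)})^T A$. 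Taking transposes reduces surjectivity of $\phi$ to that of $\psi(A):=A^T A^{(2)}$; substituting $g:=A^{-T}$ gives $\psi(A)=g^{-1}\tau(g)$ with $\tau(g):=(g^{(2)})^{-T}$, which is an algebraic group endomorphism of $GL_N$ with $d\tau|_I=0$. Since $1-d\tau|_I=\mathrm{id}$ is invertible, Lang's theorem yields surjectivity of $g\mapsto g^{-1}\tau(g)$, hence of $\psi$, hence of $\phi$.

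To verify $\sht(\textup{Fermat})=2$ in characteristic~$2$, take $a:=f\cdot x_3 x_4\cdots x_{d+1}\in I_1=(f)$ and apply Theorem~\ref{thm:Fedder's criterion for projective varieties}(1). A direct check gives $u(F_*a)=0$, while in $\theta(F_*a)=u\bigl(F_*(\Delta_1(f)\cdot a)\bigr)$ the only surviving monomials come from triples $\{i,j,k\}=\{0,1,2\}$ in $\Delta_1(f)\cdot f=\sum_{i<j,\,k} x_i^3 x_j^3 x_k^3$, each contributing $x_0^3 x_1^3 x_2^3\cdot x_3\cdots x_{d+1}$. Summing yields $\theta(F_*a)=x_0x_1x_2\notin\m^{[2]}$, so $I_2\nsubseteq\m^{[2]}$ and $\sht(X)\le 2$; combined with non-$F$-splitness, $\sht(X)=2$. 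The main obstacle is the classification step in characteristic~$2$: identifying the endomorphism $\tau$ of $GL_N$ and verifying $d\tau|_I=0$ so that Lang's theorem gives surjectivity of $\phi$. A secondary subtlety in the $p\ge 3$ case is ruling out uniformly supersingular families of linear sections, handled via a Hasse-invariant computation.
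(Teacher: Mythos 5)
Your proof takes a genuinely different route from the paper's, so let me compare. The paper's argument is short: assume $X$ is not $F$-split, iterate general smooth hyperplane sections down to a cubic surface $H_2$ (still non-$F$-split by Fedder), invoke Hara's Example~5.5 to conclude $p=2$ and $H_2\cong$ Fermat, then invoke Beauville's rigidity theorem to reconstruct $X\cong$ Fermat step by step, and finally use Proposition~\ref{cor:hypersurface inversion of adjunction} together with the fact that a smooth plane cubic has height $\le 2$. Your $p=2$ argument replaces the Hara--Beauville citations with a self-contained classification: you identify non-$F$-split smooth cubics in characteristic $2$ with invertible matrices $M=(m_{ki})$ via $f=\sum m_{ki}x_k^2x_i$, note the coordinate-change action $M\mapsto (A^{(2)})^T M A$, and prove transitivity via a Lang--Steinberg argument for the endomorphism $\tau(g)=(g^{(2)})^{-T}$ (which is surjective with $d\tau_e=0$, hence finite fixed points). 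This is a nice independent proof of the $p=2$ classification and is correct, modulo passing to an algebraically closed base field (harmless by Corollary~\ref{cor:base change}). Your explicit Fedder calculation that $\theta(F_*(f\cdot x_3\cdots x_{d+1}))=x_0x_1x_2\notin\m^{[2]}$ is also correct and gives $\sht(\text{Fermat})=2$ directly, rather than via the bound $\sht(X)\le\sht(H_1)\le2$.

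However, your $p\geq 3$ argument has a genuine gap. You want to conclude $F$-splitting from ordinariness of a general elliptic-curve section $Y$, but the justification you give for ordinariness is essentially circular. The "Hasse invariant" you propose to check, namely the coefficient of $x_a^{p-1}x_b^{p-1}x_c^{p-1}$ in $f^{p-1}$ after restricting to a coordinate $2$-plane, is literally the same as the coefficient of $x_a^{p-1}x_b^{p-1}x_c^{p-1}$ in $f^{p-1}$ itself; its nonvanishing is precisely the statement $f^{p-1}\notin\m^{[p]}$, i.e., that $X$ is $F$-split, which is what you are trying to prove. You assert it is "nonzero for some triple of coordinates" without proof, and there is no a priori reason a coordinate $2$-plane section of an arbitrary smooth cubic should even be smooth, let alone ordinary. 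The non-isotriviality of the family $\{X\cap L\}$ over the Grassmannian is also asserted without argument, and ruling out the degenerate case in which all smooth plane-section Hasse invariants vanish is exactly the content of Hara's classification. The clean fix is to do what the paper does for $p\ge 3$: cite Hara's Example~5.5 to get that every smooth cubic surface in characteristic $\geq 3$ is $F$-split, and then propagate $F$-splitting to higher dimension by Fedder's criterion applied to a general smooth hyperplane section.
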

\begin{proof}
For the proof, suppose that $X$ is not $F$-split.
We define $H_{d-1}$ as a smooth hyperplane section of $X$, and $H_{m-1}$ as a smooth hyperplane section of $H_{m}$ for $m\in\{d-1,\ldots 1\}$, inductively.
Then Fedder's criterion \cite{Fedder} shows that $H_2$ is never $F$-split, and thus it follows from \cite[Example 5.5]{Hara} that $p=2$ and $H_2$ is isomorphic to the Fermat cubic surface.
By Beauville’s theorem \cite[Theorem 2.5]{Cubic}, $H_3$ must be isomorphic to the Fermat cubic, and repeating this, we conclude that $X$ is isomorphic to the Fermat cubic.
Finally, by Proposition \ref{cor:hypersurface inversion of adjunction}, we get $\sht(X)\leq \sht(H_1)\leq 2$.
\end{proof}

\begin{prop}\label{prop:sm del Pezzo}
    Let $X$ be a smooth del Pezzo surface. Then $\sht(X)\leq 2$. 
\end{prop}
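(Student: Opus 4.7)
The strategy is to proceed by case analysis on the degree $d := K_X^2 \in \{1,2,\ldots,9\}$, using the classification of smooth del Pezzo surfaces, and to apply the inversion-of-adjunction results (Propositions~\ref{cor:hypersurface inversion of adjunction} and~\ref{cor:hypersurface inversion of adjunction weighted}) to reduce to a smooth anticanonical section, which is an elliptic curve and therefore of quasi-$F$-split height at most $2$ by \cite{Yobuko19}.

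For $d \ge 5$, I expect $X$ to be $F$-split, giving $\sht(X) = 1$. When $d \ge 6$ this is because $X$ is toric; when $d = 5$ it follows for the unique smooth del Pezzo of degree $5$ from its high degree of symmetry. Alternatively, \cite[Theorem~F]{KTTWYY} covers this entire range directly.

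For $d \in \{1,2,3\}$, I would realize $X$ as a hypersurface $X = \{f = 0\}$ in a well-formed weighted projective space via the appropriate pluri-anticanonical embedding: a cubic in $\P^3$ for $d = 3$, a quartic in $\P(1,1,1,2)$ for $d = 2$, and a sextic in $\P(1,1,2,3)$ for $d = 1$. In each of these cases $\deg f$ equals $\sum_i \mu_i - 1$, so Propositions~\ref{cor:hypersurface inversion of adjunction} and~\ref{cor:hypersurface inversion of adjunction weighted} apply to a general linear form $g$: setting $Y := \{f = g = 0\}$, one obtains $\sht(X) \le \sht(Y)$. By adjunction $\omega_Y \cong \mathcal{O}_Y$, and Bertini guarantees that $Y$ is smooth for generic $g$, so $Y$ is a smooth elliptic curve. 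Since the quasi-$F$-split height of a $1$-dimensional Calabi--Yau variety equals its Artin--Mazur height (either $1$ if ordinary or $2$ if supersingular), one concludes $\sht(Y) \le 2$ and hence $\sht(X) \le 2$.

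The main obstacle will be the remaining case $d = 4$, where $X$ is a complete intersection of two quadrics in $\P^4$ rather than a hypersurface, so Proposition~\ref{cor:hypersurface inversion of adjunction} does not apply verbatim. I plan to handle it either by citing \cite[Theorem~F]{KTTWYY}, or by extending the inversion-of-adjunction argument to complete intersections: the underlying \cite[Lemma~5.14]{KTY1} is stated for regular sequences of arbitrary length, so intersecting $X$ with a general hyperplane produces an elliptic normal curve of degree $4$ in $\P^3$ (itself a complete intersection of two quadrics) and the same reduction $\sht(X) \le \sht(Y) \le 2$ goes through via Theorem~\ref{thm:Fedder's criterion for projective varieties}(1).
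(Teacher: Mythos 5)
Your approach tracks the paper's quite closely. For $d=K_X^2\le 3$ it is exactly the paper's argument: realize $X$ as a hypersurface of degree $\sum\mu_i-1$ in the appropriate (weighted) projective space, apply Proposition~\ref{cor:hypersurface inversion of adjunction} or~\ref{cor:hypersurface inversion of adjunction weighted} to a general anticanonical member $Y$, and conclude via $\sht(X)\le\sht(Y)\le 2$ since $Y$ is an elliptic curve. For $d\ge 5$ the conclusions agree: the paper simply cites \cite[Example~5.5]{Hara} to get $\sht(X)=1$, while you offer toricity (for $d\ge 6$) and a symmetry argument (for $d=5$) as alternatives; these work, but Hara's reference is the cleaner route and covers degree $4$ as well, which is what the paper actually uses (the ``$K_X^2>4$'' in the paper's proof should be read as ``$K_X^2\ge 4$''; del Pezzo surfaces of degree $\ge 4$ are always $F$-split by Hara's example). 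Thus the separate complete-intersection machinery you set up for $d=4$, while a legitimate alternative route via \cite[Lemma~5.14]{KTY1} and Theorem~\ref{thm:Fedder's criterion for projective varieties}(1), is unnecessary --- that case is already disposed of at the $F$-split stage.

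One substantive gap in your write-up: you invoke Bertini to produce a smooth anticanonical curve $C$, but in positive characteristic Bertini for smoothness requires $|-K_X|$ to be very ample, which fails for $d=2$ (where $|-K_X|$ is only base-point-free, giving a degree-$2$ map to $\P^2$) and for $d=1$ (where $|-K_X|$ has a base point). This is precisely why the paper cites \cite[Theorem~2.15]{BT} and \cite[Theorem~1.4]{KN1} to guarantee the existence of a smooth elliptic member of $|-K_X|$; you should do the same. You should also record the base-change step via Corollary~\ref{cor:base change}, which the paper uses to reduce to an algebraically closed base field before invoking any classification or the Fedder-type criteria (which are stated over perfect fields).
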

\begin{proof}
    By Corollary \ref{cor:base change}, after replacing the base field by its algebraic closure, we may assume that $X$ is defined over an algebraically closed field.
When $K_X^2>4$, \cite[Example 5.5]{Hara} shows that $\sht(X)=1$.
When $K_X^2\leq 3$, the del Pezzo surface $X$ is a hypersurface in a weighted projective space and a general member of the anti-canonical linear system is a smooth elliptic curve $C$ (see \cite[Theorem 2.15]{BT} and \cite[Theorem 1.4]{KN1}). 
Note that, since $X$ is a smooth hypersurface, it is contained in the smooth locus of the weighted projective space; in particular, $\sO_X(1)$ is Cartier.
Thus we have $-K_X=\sO_X(1)$ since $K_X^2\leq 3$.
Therefore, we can apply Proposition \ref{cor:hypersurface inversion of adjunction weighted} to obtain $\sht(X)\leq\sht(C)\leq 2$. 
\end{proof}

\begin{eg}[Non-quasi-$F$-split del Pezzo surface with RDPs]\label{eg:non-quasi-$F$-split Du Val del Pezzo}
Let $p:=2$ and $X$ a hypersurface in a weighted projective space $\mathbb{P}(1:1:1:2)_{[x:y:z:w]}$ defined by $\{f:=w^2+xyz(x+y+z)=0\}$.
Then $X$ is a del Pezzo surface with seven $A_1$-singularities (see \cite[Proposition 2.3]{KN2}).
Set $S:=k[x,y,z,w]$. Then $\sht(S/(f))=\infty$. 
In fact, we have
\begin{eqnarray*}
I_{\infty}(f) &=&(f,x^2y^2z+xy^2z^2,x^2y^2z+x^2yz^2,x^2y^2z+xw^2,x^2y^2z+yw^2, \\
&& x^2y^2z+zw^2, x^2yzw+xy^2zw,x^2yzw+xyz^2w).
\end{eqnarray*}
This can be verified using the Macaulay2 code available on the second author’s homepage (\cite{Takamatsu_code}).
We note that since the singular locus of $ \P (1:1:1:2)$ is $\{(0:0:0:1)\}$, $X$ is contained in the regular locus of the weighted projective space.
\end{eg}

\begin{eg}[Non-quasi-$F$-split smooth Fano hypersurfaces]\label{eg:non-quasi-F-split smooth Fano}
Let $X$ be a hypersurface in $\mathbb{P}^{p+1}$ defined by
$\{f:=x_0^{p+1}+\cdots+x_{p+1}^{p+1}=0\}$.
Then $X$ is a smooth Fano $p$-fold.
If $p \geq 3$, then $X$ is not quasi-$F$-split by Corollary \ref{cor:non-q-split criterion}.
Indeed, we have $f^{p-2} \in \m^{[p]}$ if $p \geq 3$.
In particular, taking the product with $\P^1$, we can deduce from Proposition \ref{prop:fiber space} that there is a non-quasi-$F$-split smooth Fano $d$-fold in characteristic three for all $d\geq 3$.
\end{eg}

\begin{eg}[Quasi-$F$-split wild conic bundle]\label{eg:wild conic bundle}
Let $p:=2$ and $X$ be a hypersurface in $\mathbb{P}_{[x_0:x_1:x_2]}^2\times \mathbb{P}_{[y_0:y_1:y_2]}^2$ defined by
$\{f:=x_0y_0^2+x_1y_1^2+x_2y_2^2=0\}$.
Then $X$ is a smooth Fano threefold, and the restriction of the first projection of $\mathbb{P}_{[x_0:x_1:x_2]}^2\times \mathbb{P}_{[y_0:y_1:y_2]}^2$ to $X$ gives a wild conic bundle structure, i.e.,\ all fibers are non-reduced.

On the other hand, it follows from Theorem  \ref{thm:Fedder's criterion for projective varieties in weighted case} that $\sht(X)=2$.
Indeed, for $g:=x_0y_1y_2f^2 \in I_1(f)$, we have
\[
u(F_*g)=0,\ \text{and}\ \theta_f(F_*g) \notin \m^{[p]}.
\]
This shows that the quasi-$F$-split property is not inherited by a general fiber.

Furthermore, we examine the behavior of quasi-$F$-splitting under field extensions, using the generic fiber of the above fibration.
Let $K=k(S,T)$ be the function field of the base scheme.
Then the generic fiber $X_K$ of the above fibration is a hypersurface in $\P^2_K$ defined by
$Sy_0^2+Ty_1^2+y_2^2$.
Since $X$ is quasi-$F$-split, so is $X_K$.
However, the base change to $\var{K}$ is non-reduced.
and in particular, is not quasi-$F$-split.
Therefore, Corollary \ref{cor:base change, Calabi-Yau variety} fails in the non-Calabi--Yau case.
Passing to section rings shows that \cite[Theorem~5.13]{KTY1} does not extend to the non-Calabi--Yau setting.
\end{eg}

\subsection{Rational double points}

In this subsection, we determine quasi-$F$-split heights of all RDPs.
We note that such computations have already been carried out for $D$-type in characteristic $p=2$ in unpublished work by Yobuko, but the proof is completely different.

\begin{thm}\label{rdp}
Every rational double point is quasi-$F$-split.
\end{thm}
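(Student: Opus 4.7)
The plan is to combine Artin's explicit classification of rational double points in low characteristic with the Fedder-type criterion of Theorem \ref{thm:Fedder's criterion}, reducing the claim to a finite case-by-case verification.

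First I would reduce to the case $p \in \{2,3,5\}$: by a result of Hara, every rational double point in characteristic $p>5$ is $F$-split, hence has $\sht = 1$. For the remaining characteristics, since $\sht(R) = \sht(\widehat{R})$ by Proposition \ref{prop:completion}, we may replace $R$ by its completion, which by Artin's classification is isomorphic to $k[[x,y,z]]/(f)$ for $f$ one of finitely many explicit polynomials indexed by Dynkin type $A_n$, $D_n^r$, $E_6^r$, $E_7^r$, $E_8^r$. A second application of Proposition \ref{prop:completion} then lets us regard $R$ as the localization of $k[x,y,z]/(f)$ at the origin, which places us in Convention \ref{conv:example} with $m=1$ and $f'_1 = f$.

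The $A_n$ singularities $xy + z^{n+1}$ are $F$-split in every characteristic by Fedder's original criterion, so it remains to treat the finite list of $D_n^r$ and $E_k^r$ normal forms in characteristics $2$, $3$, and $5$. For each such $f$, the strategy is direct application of Theorem \ref{thm:Fedder's criterion}: form $I_1 = (f^{p-1}) + (f^p)$, iteratively build $I_{n+1} = \theta(F_* I_n \cap \ker u) + I_1$ using the $\theta$ attached to $f$, and exhibit some $n$ for which $I_n$ contains an element whose monomial support meets $\{x^{i}y^{j}z^{k} : 0 \le i,j,k \le p-1\}$, so that $I_n \not\subseteq \m^{[p]}$. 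For the $E_k$ cases, and for each $D_n^r$ of bounded index, this is a finite computation that terminates at small $n$.

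The main obstacle is the $D_n$ family in characteristic $2$ with normal form $z^2 + x^2 y + x y^n$ (and its variants $D_n^r$), whose height grows as $\lceil \log_2 n \rceil$ and is therefore unbounded in $n$. The plan for this case is an explicit inductive ansatz: for each $k$, I would identify a distinguished generator $g_k \in I_k$ of the form $x^{a_k} y^{b_k} z^{c_k}$ (modulo lower-order terms) with the $y$-exponent $b_k$ roughly doubling at every step, check that a suitable element of $F_* g_k \cdot (\text{correction}) \cap \ker u$ is mapped by $\theta$ to a valid $g_{k+1} \in I_{k+1}$, and then verify that $g_k$ first escapes $\m^{[2]}$ precisely at $k = \lceil \log_2 n \rceil$. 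The delicate point, which will be the core combinatorial work, is to show that \emph{no} other element produced by the $\theta$-iteration escapes $\m^{[2]}$ any earlier; this requires a careful inductive bound on the $\m^{[2]}$-divisibility of all elements in $I_k$, not merely of the distinguished $g_k$. Once the $D_n$ case is settled, combining all cases with the $p>5$ result yields $\sht(R) < \infty$ for every RDP, as required.
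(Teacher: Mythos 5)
Your proposal follows essentially the same route as the paper: reduce to $p \le 5$ via Hara's result that all taut RDPs are $F$-regular, pass to completions and invoke Artin's normal forms, dispose of the bounded families ($E_6, E_7, E_8$ and $D_n^r$ with small $n$) by direct computation with Theorem~\ref{thm:Fedder's criterion}, and treat the unbounded $D_n$ family in $p=2$ by exhibiting an explicit chain $g_1, g_2, \ldots$ with $g_l \in I_l(f)$ whose $y$-exponents halve by powers of two under $\theta$ until $g_{m+1} = xyz \notin \m^{[p]}$ with $m = \lceil \log_2 n \rceil$ — exactly the paper's construction $g_l = xy^{2^{m+1-l}}z$. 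One small remark: the ``delicate point'' you flag (bounding the $\m^{[2]}$-divisibility of \emph{all} of $I_k$ to pin down the exact height) is only needed for the sharp lower bound $\sht \ge m+1$, not for the theorem as stated, and the paper handles it more lightly by checking directly that $f\Delta_1(f)^{1+2+\cdots+2^{l-2}} \in \m^{[p^l]}$ for $l \le m$ rather than by an induction over all of $I_k$.
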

We recall that RDPs in characteristic $p>5$ are $F$-regular (hence $F$-split). 
More generally, every taut RDP is $F$-regular by \cite[Theorem 1.1]{Hara2}.
Table \ref{table:RDPs} is a list of quasi-$F$-split heights of non-taut RDPs in characteristic $p=2,3,5$.

\begin{table}[ht]
\caption{Quasi-$F$-split heights of non-taut RDPs}
\centering
\begin{tabular}{|l|c|l|c|l|}
\hline
$p$ & type & \hspace{18mm} $f$ & ht($R/(f)$) & \\
\hline
\hline
2 & $D_{2n}^{0}$ &$z^2 +x^{2}y+xy^n$ & $\lceil \log_2 n \rceil +1 $&$n \geq 2$ \\
\hline
2 & $D_{2n}^{r}$ &$z^2 +x^{2}y+xy^n+xy^{n-r}z$ &$ \lceil \log_2 (n-r) \rceil +1 $&$r=1, \ldots, n-1$ \\
\hline
2 & $D_{2n+1}^{0}$ &$z^2+x^2y+y^nz$ & $\lceil \log_2 n \rceil +1$ & $n \geq 2$ \\
\hline
2 & $D_{2n+1}^{r}$ &$z^2+x^2y+y^nz+xy^{n-r}z$ &$\lceil \log_2 (n-r) \rceil +1 $& $r=1,\ldots, n-1$\\
\hline
2 & $E_{6}^{0}$ &$z^2+x^3+y^2z $& 2 & \\
\hline
2 & $E_{6}^{1}$ &$z^2+x^3+y^2z+xyz$ & 1 & \\
\hline
2 & $E_{7}^{0}$ &$z^2+x^3+xy^3$ & 4 & \\
\hline
2 & $E_{7}^{1}$ &$z^2+x^3+xy^3+x^2yz$& 3 & \\
\hline
2 & $E_{7}^{2}$ &$z^2+x^3+xy^3+y^3z$ & 2 & \\
\hline
2 & $E_{7}^{3}$ &$z^2+x^3+xy^3+xyz$& 1 & \\
\hline
2 & $E_{8}^{0}$ &$z^2+x^3+y^5$ & 4 & \\
\hline
2 & $E_{8}^{1}$ &$z^2+x^3+y^5+xy^3z$ & 4 & \\
\hline
2 & $E_{8}^{2}$ &$z^2+x^3+y^5+xy^2z$ & 3 & \\
\hline
2 & $E_{8}^{3}$ &$z^2+x^3+y^5+y^3z$ & 2 & \\
\hline
2 & $E_{8}^{4}$ &$z^2+x^3+y^5+xyz$& 1 & \\
\hline
3 & $E_{6}^{0}$ & $z^2+x^3+y^4$ &2 & \\ 
\hline
3 & $E_{6}^{1}$ &$z^2+x^3+y^4+x^2y^2$ &1 & \\ 
\hline
3 & $E_{7}^{0}$ & $z^2+x^3+xy^3$ &2 & \\ 
\hline
3 & $E_{7}^{1}$ &$z^2+x^3+xy^3+x^2y^2$ & 1& \\ 
\hline
3 & $E_{8}^{0}$ & $z^2+x^3+y^5$ & 3& \\ 
\hline
3 & $E_{8}^{1}$ & $z^2+x^3+y^5+x^2y^3$ &2 & \\ 
\hline
3 & $E_{8}^{2}$ & $z^2+x^3+y^5+x^2y^2$ &1 & \\ 
\hline
5 & $E_{8}^{0}$ &$z^2+x^3+y^5$  & 2& \\ 
\hline
5 & $E_{8}^{1}$ &$z^2+x^3+y^5+xy^4$ &1 & \\ 
\hline
\end{tabular}
\label{table:RDPs}
\end{table}

We explain the method of computation
for the $D_{2n}^{0}$-type in $p=2$.
We put 
\[
f:= z^2+x^2y+xy^n.
\]
Then we have
\[
\Delta_{1}(f) = x^2yz^2+xy^nz^2+x^3y^{n+1}.
\]
We put $m:=\lceil \log_2 n \rceil$.
First, we prove $\sht(R/(f)) \geq m+1$.
It is enough to show that $f\Delta_1(f)^{1+2+\cdots+2^{l-2}} \in \m^{[p^l]}$ for $l \leq m$.
Each term in $f\Delta_1(f)^{1+2+\cdots+2^{l-2}}$ has a form
\[
(z^2)^a(x^2y)^b(xy^n)^c=x^{2b+c}y^{b+nc}z^{2a}
\]
with $a+b+c=2^{l}-1$.
If it is not contained in $\m^{[p^l]}$, then we have
\[
2b+c \leq 2^l-1,\ b+nc \leq 2^l-1,\ 2a \leq 2^l-1.
\]
Therefore, we have $\log_2 n \leq l-1$, and thus $m+1 \leq l$.
Next, we prove $\sht(R/(f)) \leq m+1$.
First, we consider the case where $n$ is even.
We have the expansion
\[
F_*\Delta_{1}(f) = xzv_y+y^{n/2}zv_x+xy^{n/2}v_{xy}.
\]
We put $g_1:=zy^{2^m-n}f$. Then we have
\[
u(F_*g_1)=0,\ \text{and}\ \theta_f(F_*g_1)=xy^{2^{m-1}}z.
\]
We put $g_2:=xy^{2^{m-1}}z$. Then we have
\[
u(F_*g_2)=0,\ \text{and}\ \theta_f(F_*g_2)=xy^{2^{m-2}}z
\]
if $m > 1$.
Therefore, if we put $g_l:=xy^{2^{m+1-l}}z$ for $2 \leq l \leq m+1$, then we have $g_l \in I_l(f)$.
Furthermore, since $g_{m+1}=xyz \notin \m^{[p]}$, we have $\sht(R/(f)) \leq m +1$.

Next, we consider the case where $n$ is odd.
Then we put $g_1:=y^{2^m-n}zf$.
Since we have
\[
F_*\Delta_1(f)=xzv_y+y^{(n-1)/2}zv_{xy}+xy^{(n+1)/2}v_x,
\]
it follows that
\[
u(F_*g_1)=0,\ \text{and}\ \theta_f(F_*g_1)=xy^{2^{m-1}}z.
\]
If we put $g_2:=xy^{2^{m-1}}z$, then
\[
u(F_*g_2)=0,\ \text{and}\ \theta_f(F_*g_2)=xy^{2^{m-2}}z
\]
when $m >1$.
Therefore, if we put $g_l:=xy^{2^{m+1-l}}z$ for $2 \leq l \leq m+1$, then we have $g_l \in I_l(f)$.
Furthermore, since $g_{m+1}=xyz \notin \m^{[p]}$, we have $\sht(R/(f)) \leq m + 1$.

\printbibliography

@article{KTY1,
Author = {Kawakami, Tatsuro and Takamatsu, Teppei and Yoshikawa, Shou},
Title = {Fedder type criterion for quasi-{$F$}-splitting I},
Year = {2022},
Journal = {arXiv:2204.10076, to appear in Amer.~J.~Math.},
}

@Article{NT,
  author        = {{Nagamachi}, Ippei and {Takamatsu}, Teppei},
  journal       = {arXiv preprint arXiv:2110.03917},
  title         = {{On behavior of conductors, Picard schemes, and Jacobian numbers of varieties over imperfect fields}},
  year          = {2021}
}

@article {Ejiri,
    AUTHOR = {Ejiri, Sho},
     TITLE = {When is the {A}lbanese morphism an algebraic fiber space in
              positive characteristic?},
   JOURNAL = {Manuscripta Math.},
  FJOURNAL = {Manuscripta Mathematica},
    VOLUME = {160},
      YEAR = {2019},
    NUMBER = {1-2},
     PAGES = {239--264},
      ISSN = {0025-2611},
   MRCLASS = {14D06 (14G17)},
  MRNUMBER = {3983395},
MRREVIEWER = {Chen Jiang},
       DOI = {10.1007/s00229-018-1056-6},
       URL = {https://doi.org/10.1007/s00229-018-1056-6},
}

@article {LZ,
    AUTHOR = {Langer, Andreas and Zink, Thomas},
     TITLE = {De {R}ham-{W}itt cohomology for a proper and smooth morphism},
   JOURNAL = {J. Inst. Math. Jussieu},
  FJOURNAL = {Journal of the Institute of Mathematics of Jussieu. JIMJ.
              Journal de l'Institut de Math\'{e}matiques de Jussieu},
    VOLUME = {3},
      YEAR = {2004},
    NUMBER = {2},
     PAGES = {231--314},
      ISSN = {1474-7480},
   MRCLASS = {14F30 (14F40)},
  MRNUMBER = {2055710},
MRREVIEWER = {Martin C. Olsson},
       DOI = {10.1017/S1474748004000088},
       URL = {https://doi.org/10.1017/S1474748004000088},
}

@article {Yobuko19,
    AUTHOR = {Yobuko, Fuetaro},
     TITLE = {Quasi-{F}robenius splitting and lifting of {C}alabi-{Y}au
              varieties in characteristic {$p$}},
   JOURNAL = {Math. Z.},
  FJOURNAL = {Mathematische Zeitschrift},
    VOLUME = {292},
      YEAR = {2019},
    NUMBER = {1-2},
     PAGES = {307--316},
      ISSN = {0025-5874},
   MRCLASS = {14J32 (13F35 14G17)},
  MRNUMBER = {3968903},
MRREVIEWER = {Tyler L. Kelly},
       DOI = {10.1007/s00209-018-2198-7},
       URL = {https://doi.org/10.1007/s00209-018-2198-7},
}

@article {KN1,
    AUTHOR = {Kawakami, Tatsuro and Nagaoka, Masaru},
     TITLE = {Pathologies and liftability of {D}u {V}al del {P}ezzo surfaces
              in positive characteristic},
   JOURNAL = {Math. Z.},
  FJOURNAL = {Mathematische Zeitschrift},
    VOLUME = {301},
      YEAR = {2022},
    NUMBER = {3},
     PAGES = {2975--3017},
      ISSN = {0025-5874},
   MRCLASS = {14},
   MRNUMBER = {4437346},
       DOI = {10.1007/s00209-022-02998-6},
       URL = {https://doi.org/10.1007/s00209-022-02998-6},
}

@article {Hara,
    AUTHOR = {Hara, Nobuo},
     TITLE = {A characterization of rational singularities in terms of
              injectivity of {F}robenius maps},
   JOURNAL = {Amer. J. Math.},
  FJOURNAL = {American Journal of Mathematics},
    VOLUME = {120},
      YEAR = {1998},
    NUMBER = {5},
     PAGES = {981--996},
      ISSN = {0002-9327},
   MRCLASS = {13A99 (14B05 14E05)},
  MRNUMBER = {1646049},
MRREVIEWER = {Karen E. Smith},
       URL =
              {http://muse.jhu.edu/journals/american_journal_of_mathematics/v120/120.5hara.pdf},
}

@article {Hara2,
    AUTHOR = {Hara, Nobuo},
     TITLE = {Classification of two-dimensional {$F$}-regular and {$F$}-pure
              singularities},
   JOURNAL = {Adv. Math.},
  FJOURNAL = {Advances in Mathematics},
    VOLUME = {133},
      YEAR = {1998},
    NUMBER = {1},
     PAGES = {33--53},
      ISSN = {0001-8708},
   MRCLASS = {14J17 (14B05)},
  MRNUMBER = {1492785},
MRREVIEWER = {Karen E. Smith},
       DOI = {10.1006/aima.1997.1682},
       URL = {https://doi.org/10.1006/aima.1997.1682},
}

@incollection {Yobuko2,
    AUTHOR = {Yobuko, Fuetaro},
     TITLE = {On the {F}robenius-splitting height of varieties in positive
              characteristic},
 BOOKTITLE = {Algebraic number theory and related topics 2016},
    SERIES = {RIMS K\^{o}ky\^{u}roku Bessatsu, B77},
     PAGES = {159--175},
 PUBLISHER = {Res. Inst. Math. Sci. (RIMS), Kyoto},
      YEAR = {2020},
   MRCLASS = {14G17 (14J28)},
  MRNUMBER = {4278982},
}

@article {Fedder,
    AUTHOR = {Fedder, Richard},
     TITLE = {{$F$}-purity and rational singularity},
   JOURNAL = {Trans. Amer. Math. Soc.},
  FJOURNAL = {Transactions of the American Mathematical Society},
    VOLUME = {278},
      YEAR = {1983},
    NUMBER = {2},
     PAGES = {461--480},
      ISSN = {0002-9947},
   MRCLASS = {13H10 (13D03 14B05)},
  MRNUMBER = {701505},
MRREVIEWER = {D. Kirby},
       DOI = {10.2307/1999165},
       URL = {https://doi-org.utokyo.idm.oclc.org/10.2307/1999165},
}

@incollection {Bombieri2,
    AUTHOR = {Bombieri, E. and Mumford, D.},
     TITLE = {Enriques' classification of surfaces in char. {$p$}. {II}},
 BOOKTITLE = {Complex analysis and algebraic geometry},
     PAGES = {23--42},
      YEAR = {1977},
   MRCLASS = {14J10},
  MRNUMBER = {0491719},
MRREVIEWER = {S. L. Kleiman},
}

@article {tanaka22,
    AUTHOR = {Tanaka, Hiromu},
     TITLE = {Vanishing theorems of {K}odaira type for {W}itt canonical
              sheaves},
   JOURNAL = {Selecta Math. (N.S.)},
  FJOURNAL = {Selecta Mathematica. New Series},
    VOLUME = {28},
      YEAR = {2022},
    NUMBER = {1},
     PAGES = {Paper No. 12, 50},
      ISSN = {1022-1824},
   MRCLASS = {14F30 (14F17)},
  MRNUMBER = {4346509},
       DOI = {10.1007/s00029-021-00736-0},
       URL = {https://doi-org.utokyo.idm.oclc.org/10.1007/s00029-021-00736-0},
}

@article {KN2,
    AUTHOR = {Kawakami, Tatsuro and Nagaoka, Masaru},
     TITLE = {Classification of {D}u {V}al del {P}ezzo surfaces of {P}icard
              rank one in characteristic two and three},
   JOURNAL = {J. Algebra},
  FJOURNAL = {Journal of Algebra},
    VOLUME = {636},
      YEAR = {2023},
     PAGES = {603--625},
      ISSN = {0021-8693,1090-266X},
   MRCLASS = {14J26 (13A35 14G17 14J45)},
  MRNUMBER = {4644313},
       DOI = {10.1016/j.jalgebra.2023.08.027},
       URL = {https://doi.org/10.1016/j.jalgebra.2023.08.027},
}

@incollection {Dol,
    AUTHOR = {Dolgachev, Igor},
     TITLE = {Weighted projective varieties},
 BOOKTITLE = {Group actions and vector fields ({V}ancouver, {B}.{C}., 1981)},
    SERIES = {Lecture Notes in Math.},
    VOLUME = {956},
     PAGES = {34--71},
 PUBLISHER = {Springer, Berlin},
      YEAR = {1982},
   MRCLASS = {14L32 (14A05 14B05)},
  MRNUMBER = {704986},
       DOI = {10.1007/BFb0101508},
       URL = {https://doi-org.utokyo.idm.oclc.org/10.1007/BFb0101508},
}

@article {GK2,
    AUTHOR = {van der Geer, G. and Katsura, T.},
     TITLE = {On the height of {C}alabi-{Y}au varieties in positive
              characteristic},
   JOURNAL = {Doc. Math.},
  FJOURNAL = {Documenta Mathematica},
    VOLUME = {8},
      YEAR = {2003},
     PAGES = {97--113},
      ISSN = {1431-0635},
   MRCLASS = {14J32 (14L05)},
  MRNUMBER = {2029163},
MRREVIEWER = {Mark Gross},
}

@article {KTTWYY,
    AUTHOR = {Kawakami, Tatsuro and Takamatsu, Teppei and Tanaka, Hiromu and
              Witaszek, Jakub and Yobuko, Fuetaro and Yoshikawa, Shou},
     TITLE = {Quasi-{$F$}-splittings in birational geometry},
   JOURNAL = {Ann. Sci. \'Ec. Norm. Sup\'er. (4)},
  FJOURNAL = {Annales Scientifiques de l'\'Ecole Normale Sup\'erieure.
              Quatri\`eme S\'erie},
    VOLUME = {58},
      YEAR = {2025},
    NUMBER = {3},
     PAGES = {665--748},
      ISSN = {0012-9593,1873-2151},
   MRCLASS = {14E05 (13A35 14G17)},
  MRNUMBER = {4962159},
}

@article {Kawakami-Tanaka2,
    AUTHOR = {Kawakami, Tatsuro and Tanaka, Hiromu},
     TITLE = {Weak quasi-{$F$}-splitting and del {P}ezzo varieties},
   JOURNAL = {J. Lond. Math. Soc. (2)},
  FJOURNAL = {Journal of the London Mathematical Society. Second Series},
    VOLUME = {111},
      YEAR = {2025},
    NUMBER = {2},
     PAGES = {Paper No. e70098, 47},
      ISSN = {0024-6107,1469-7750},
   MRCLASS = {14J45 (13A35 14G17 14J26)},
  MRNUMBER = {4868758},
       DOI = {10.1112/jlms.70098},
       URL = {https://doi-org.utokyo.idm.oclc.org/10.1112/jlms.70098},
}

@article {KTTWYY2,
    AUTHOR = {Kawakami, Tatsuro and Takamatsu, Teppei and Tanaka, Hiromu and
              Witaszek, Jakub and Yobuko, Fuetaro and Yoshikawa, Shou},
     TITLE = {Quasi-{$F$}-splittings in birational geometry {II}},
   JOURNAL = {Proc. Lond. Math. Soc. (3)},
  FJOURNAL = {Proceedings of the London Mathematical Society. Third Series},
    VOLUME = {128},
      YEAR = {2024},
    NUMBER = {4},
     PAGES = {Paper No. e12593, 81},
      ISSN = {0024-6115,1460-244X},
   MRCLASS = {14E30 (13A35 14G17)},
  MRNUMBER = {4731853},
MRREVIEWER = {Karl\ Schwede},
       DOI = {10.1112/plms.12593},
       URL = {https://doi-org.utokyo.idm.oclc.org/10.1112/plms.12593},
}

@article{KTTWYY3,
Author = {Kawakami, Tatsuro and Takamatsu, Teppei and Tanaka, Hiromu and
              Witaszek, Jakub and Yobuko, Fuetaro and Yoshikawa, Shou},
Title = {Quasi-{$F$}-splittings in birational geometry {III}},
Year = {2024},
Journal = {arXiv:2408.01921},
}

@article{Kawakami-Tanaka4,
Author = {Kawakami, Tatsuro and Tanaka, Hiromu},
Title = {Liftability and vanishing theorems for Fano threefolds in positive characteristic II},
Year = {2024},
Journal = {arXiv:2404.04764},
}

@article{YobukoHodgeWitt,
Author = {Yobuko, Fuetaro},
Title = {Quasi-F-split and Hodge-Witt},
Year = {2023},
Journal = {arXiv:2312.00682},
}

@article{TWY,
Author = {Tanaka, Hiromu and Witaszek, Jakub and Yobuko, Fuetaro},
Title = {Quasi-{$F$}-regularity and quasi-$+$-regularity},
Year = {2023},
Journal = {In preparation},
}

@article {Cubic,
    AUTHOR = {Kadyrsizova, Zhibek and Kenkel, Jennifer and Page, Janet and
              Singh, Jyoti and Smith, Karen E. and Vraciu, Adela and Witt,
              Emily E.},
     TITLE = {Cubic surfaces of characteristic two},
   JOURNAL = {Trans. Amer. Math. Soc.},
  FJOURNAL = {Transactions of the American Mathematical Society},
    VOLUME = {374},
      YEAR = {2021},
    NUMBER = {9},
     PAGES = {6251--6267},
      ISSN = {0002-9947},
   MRCLASS = {14G17 (13A35 14J26)},
  MRNUMBER = {4302160},
       DOI = {10.1090/tran/8341},
       URL = {https://doi-org.utokyo.idm.oclc.org/10.1090/tran/8341},
}

@article {AKMMMP,
    AUTHOR = {An, Sang Yook and Kim, Seog Young and Marshall, David C. and
              Marshall, Susan H. and McCallum, William G. and Perlis,
              Alexander R.},
     TITLE = {Jacobians of genus one curves},
   JOURNAL = {J. Number Theory},
  FJOURNAL = {Journal of Number Theory},
    VOLUME = {90},
      YEAR = {2001},
    NUMBER = {2},
     PAGES = {304--315},
      ISSN = {0022-314X},
   MRCLASS = {14H40 (11G05 14C20 14C40 14H52)},
  MRNUMBER = {1858080},
MRREVIEWER = {V. V. Chueshev},
       DOI = {10.1006/jnth.2000.2632},
       URL = {https://doi-org.utokyo.idm.oclc.org/10.1006/jnth.2000.2632},
}

@article{PW,
  title={Singularities of general fibers and the LMMP},
  author={Patakfalvi, Zsolt and Waldron, Joe},
  journal={American Journal of Mathematics},
  volume={144},
  number={2},
  pages={505--540},
  year={2022},
  publisher={Johns Hopkins University Press}
}

@incollection {Artin,
    AUTHOR = {Artin, M.},
     TITLE = {Coverings of the rational double points in characteristic
              {$p$}},
 BOOKTITLE = {Complex analysis and algebraic geometry},
     PAGES = {11--22},
      YEAR = {1977},
   MRCLASS = {14B05},
  MRNUMBER = {0450263},
MRREVIEWER = {Jonathan M. Wahl},
}

@article {tanaka21,
    AUTHOR = {Tanaka, Hiromu},
     TITLE = {Invariants of algebraic varieties over imperfect fields},
   JOURNAL = {Tohoku Math. J. (2)},
  FJOURNAL = {The Tohoku Mathematical Journal. Second Series},
    VOLUME = {73},
      YEAR = {2021},
    NUMBER = {4},
     PAGES = {471--538},
      ISSN = {0040-8735},
   MRCLASS = {14G17 (14D06)},
  MRNUMBER = {4355058},
       DOI = {10.2748/tmj.20200611},
       URL = {https://doi.org/10.2748/tmj.20200611},
}

@article {BT,
    AUTHOR = {Bernasconi, Fabio and Tanaka, Hiromu},
     TITLE = {On del {P}ezzo fibrations in positive characteristic},
   JOURNAL = {J. Inst. Math. Jussieu},
  FJOURNAL = {Journal of the Institute of Mathematics of Jussieu. JIMJ.
              Journal de l'Institut de Math\'{e}matiques de Jussieu},
    VOLUME = {21},
      YEAR = {2022},
    NUMBER = {1},
     PAGES = {197--239},
      ISSN = {1474-7480},
   MRCLASS = {14E30 (14G17 14J45)},
  MRNUMBER = {4366337},
       DOI = {10.1017/S1474748020000067},
       URL = {https://doi.org/10.1017/S1474748020000067},
}

@misc{Takamatsu_code,
  author       = {Teppei Takamatsu},
  title        = {Macaulay 2 scripts for Fedder type criteria for quasi-F-splitting},
  howpublished = {available at \url{https://sites.google.com/view/teppei-takamatsu/home/scripts}},
}

@book {Silverman,
    AUTHOR = {Silverman, Joseph H.},
     TITLE = {The arithmetic of elliptic curves},
    SERIES = {Graduate Texts in Mathematics},
    VOLUME = {106},
 PUBLISHER = {Springer-Verlag, New York},
      YEAR = {1986},
     PAGES = {xii+400},
      ISBN = {0-387-96203-4},
   MRCLASS = {11G05 (14Gxx 14K07 14K15)},
  MRNUMBER = {817210},
MRREVIEWER = {Robert\ S.\ Rumely},
       DOI = {10.1007/978-1-4757-1920-8},
       URL = {https://doi-org.kyoto-u.idm.oclc.org/10.1007/978-1-4757-1920-8},
}

@book {Mumford,
    AUTHOR = {Mumford, David},
     TITLE = {Abelian varieties},
    SERIES = {Tata Institute of Fundamental Research Studies in Mathematics},
    VOLUME = {5},
 PUBLISHER = {Tata Institute of Fundamental Research, Bombay; by Oxford
              University Press, London},
      YEAR = {1970},
     PAGES = {viii+242},
   MRCLASS = {14.51},
  MRNUMBER = {282985},
MRREVIEWER = {James\ Milne},
}

@article {Mehta-Srinivas,
    AUTHOR = {Mehta, V. B. and Srinivas, V.},
     TITLE = {Varieties in positive characteristic with trivial tangent
              bundle},
      NOTE = {With an appendix by Srinivas and M. V. Nori},
   JOURNAL = {Compositio Math.},
  FJOURNAL = {Compositio Mathematica},
    VOLUME = {64},
      YEAR = {1987},
    NUMBER = {2},
     PAGES = {191--212},
      ISSN = {0010-437X,1570-5846},
   MRCLASS = {14E20 (14K99 14L30)},
  MRNUMBER = {916481},
MRREVIEWER = {H.\ Lange},
       URL = {http://www.numdam.org/item?id=CM_1987__64_2_191_0},
}
\end{document}